\def \BasePath{./}
\newcolumntype{P}[1]{>{\centering\arraybackslash}p{#1}}
\newcommand\Tstrut{\rule{0pt}{2.6ex}}         
\newcommand\Bstrut{\rule[-0.9ex]{0pt}{0pt}}   
\newtheorem{theorem}{Theorem}
\newtheorem{proposition}{Proposition}
\newtheorem{definition}{Definition}
\newtheorem{corollary}{Corollary}
\newtheorem{example}{Example}
\renewcommand\footnoterule{%
  \kern-3\p@
  \hrule\@width \textwidth
  \kern2.6\p@}
\renewcommand*{\@fnsymbol}[1]{\ensuremath{\ifcase#1\or *\or \dagger\or \ddagger\or **\or \mathsection\or \mathparagraph\or \|\or  \dagger\dagger
   \or \ddagger\ddagger \else\@ctrerr\fi}}
\newcommand{\parentheses}[1]{\left(#1\right)}
\newcommand{\singleindexset}[1]{\left[ #1 \right]}
\newcommand{\set}[1]{\left\{#1\right\}}
\newcommand{\bigO}{\mathcal{O}}
\newcommand{\modo}{\mathcal{M}}
\newcommand{\modorec}{\modo_{\mathcal{R}}}
\newcommand{\feasibleset}{\mathcal{X}}
\newcommand{\feasiblevaluestate}{\mathcal{V}}
\newcommand{\nobj}{K}
\newcommand{\objindex}{k}
\newcommand{\xset}{\mathcal{X}}
\newcommand{\yset}{\mathcal{Y}}
\newcommand{\nds}{\yset_\textnormal{N}}
\newcommand{\nd}[1]{\mathtt{ND}\parentheses{#1}}
\newcommand{\strictlydominated}{\prec}
\newcommand{\strictlydominates}{\succ}
\newcommand{\paretofrontier}[1]{\mathtt{PF}\parentheses{#1}}
\newcommand{\statespace}{\mathcal{S}}
\newcommand{\reward}{\delta}
\newcommand{\transition}{\tau}
\newcommand{\networkmodel}{\mathcal{N}}
\newcommand{\nodes}{\mathcal{L}}
\newcommand{\arcs}{\mathcal{A}}
\newcommand{\layer}[1]{\mathcal{L}_{#1}}
\newcommand{\terminalnode}{\mathbf{t}}
\newcommand{\rootnode}{\mathbf{r}}
\newcommand{\arcdomain}[1]{d\parentheses{#1}}
\newcommand{\arcweights}[1]{w\parentheses{#1}}
\newcommand{\arcweightfunction}{w}
\newcommand{\arcroot}[1]{r\parentheses{#1}}
\newcommand{\arcterminal}[1]{t\parentheses{#1}}
\newcommand{\arclayer}[1]{\ell\parentheses{#1}}
\newcommand{\nodelayer}[1]{\ell\parentheses{#1}}
\newcommand{\pathset}[2]{\mathcal{P}\parentheses{#1,#2}}
\newcommand{\Pathset}{\mathcal{P}}
\newcommand{\pathweight}[1]{w\parentheses{#1}}
\newcommand{\labelSetTD}[1]{\mathcal{Z}^\mathrm{TD}\parentheses{#1}}
\newcommand{\labelSetBU}[1]{\mathcal{Z}^\mathrm{BU}\parentheses{#1}}
\newcommand{\couple}[2]{\mathtt{CP}\parentheses{#1,#2}}
\newcommand{\coupleLayer}[1]{\mathtt{CP^L}\parentheses{#1}}
\DeclareMathOperator*{\argmax}{arg\,max}
\newcommand{\kirlik}{\texttt{K}}
\newcommand{\aira}{\texttt{O}}
\newcommand{\bu}{\texttt{BU}}
\newcommand{\bup}{\texttt{BU+}}
\newcommand{\td}{\texttt{TD}}
\newcommand{\tdp}{\texttt{TD+}}
\newcommand{\coup}{\texttt{Coup}}
\newcommand{\coupp}{\texttt{Coup+}}
\newcommand{\andre}[1]{\textcolor{black}{#1}} 
\newcommand{\carlos}[1]{\textcolor{black}{#1}} 
\begin{document}

\title{Network Models for Multiobjective Discrete Optimization}

\author[1]{David Bergman\thanks{david.bergman@business.uconn.edu}}
\author[2]{Merve Bodur\thanks{bodur@mie.utoronto.ca}}
\author[3]{Carlos Cardonha\thanks{carloscardonha@br.ibm.com}}
\author[4]{Andre A. Cire\thanks{acire@utsc.utoronto.ca}} 
\affil[1]{\small Department of Operations and Information Management, University of Connecticut} 
\affil[2]{\small Department of Mechanical and Industrial Engineering, University of Toronto}
\affil[3]{\small IBM Research}
\affil[4]{\small Department of Management, University of Toronto Scarborough}
\maketitle

\begin{abstract}
This paper provides a novel framework for solving multiobjective discrete
optimization problems with an arbitrary number of objectives. Our framework
formulates these problems as \textit{network models}, in that enumerating the
Pareto frontier amounts to solving a multicriteria shortest path problem in an
auxiliary network. We design techniques for exploiting the network
model in order to accelerate the identification of the Pareto frontier, most
notably a number of operations to simplify the network by removing nodes and
arcs while preserving the set of nondominated solutions. We show that the
proposed framework yields orders-of-magnitude performance improvements over
existing state-of-the-art algorithms on five problem classes containing both
linear and nonlinear objective functions.
\\ \\
\smallskip
\noindent \textbf{Keywords.} Multicriteria decision making; multiobjective discrete optimization; dynamic programming; integer programming; Pareto frontier; network models
\end{abstract}

\section{Introduction}
\label{sec:introduction}


\textit{Multiobjective optimization} is the study of algorithms for optimization
problems associated with two or more objective functions. Such problems arise
naturally in prescriptive decision making, where decision makers often face
conflicting criteria that balance trade-offs associated with a proposed
solution. Practical applications of multiobjective optimization are pervasive
and found in a diverse set of domains. This includes, for example, production
planning for supply chains \citep{dickersbach2015supplychain}, radiotherapy
optimization in healthcare \citep{radiotherapyYu2009}, and aerodynamic design
\citep{aerodynamicWang2011}, which may involve from a few up to potentially
hundreds of objective functions. Furthermore, multiobjective optimization can
also be used as an alternative method to solve single-objective problems \citep{bodur2016decomposition}, expanding even further on
the importance of the study of techniques to tackle these
computationally challenging problems.

\carlos{
In this paper, we focus on 
multiobjective \textit{discrete} optimization problems (MODOs) that admit recursive
formulations; in such problems, variables may only assume values
from a finite set.} There is a rich history of the study of techniques for
addressing MODOs, often leveraging advances in integer programming (IP) technology; see,
e.g., in-depth surveys by \citealt{ehrgott2006discussion} and
\citealt{ZhoQuHiZhaSugZha11}. Specifically, current state-of-the-art
methodologies rely on parametric single-objective reformulations of MODOs,
employing commercial IP solvers as black-boxes of 
their algorithms. The Pareto frontier, however, cannot be fully
recovered by such linear parametric models in general \citep{sayin2005multiobjective}.
These methods therefore rely on explicit enumeration techniques, limiting the size of problems that can be solved both in terms of
the number of variables and the number of objective functions. 

\textit{Contributions.} This paper presents a new approach for modeling and
solving MODOs \carlos{that admit recursive formulations}. 
Our framework reformulates the problem of identifying points in
the Pareto frontier as a multicriteria shortest path problem
\citep{GarGioTav10} over a structured network. In particular, the network
implicitly represents the objective space in a compact way by exploiting
symmetry and dominance relationship between solutions, which can be defined both generally 
or in a problem-specific form.
 
Our first contribution is the design of two approaches for obtaining a valid
network for a MODO.  One is based on a recursive model and the other is
extracted from a decision diagram representation of the problem
\citep{bergman2016decision}. Both techniques leverage construction procedures
already available in the literature, some of which are inspired by early dynamic
programming (DP) approaches for multiobjective optimization
\citep{carraway1990generalized}. Nonetheless, we exploit the network structure
as opposed to the recursive formulation directly, which provides two benefits. First,
network representations do not require linear formulations, thereby broadening
modeling expressiveness over other popular techniques. Second, network models
inherently exploit symmetry by combining subpaths that correspond to common
objective function evaluations, decreasing the enumeration requirements from
previous techniques.


As our second contribution, we propose \textit{validity-preserving operations}
(VPOs) designed to reduce the size of a network while maintaining validity. A
network for a MODO is, in general, exponentially large in the input size of the
problem.  Even if the network itself is of manageable size, computing a
multicriteria shortest path may take a prohibitively long time.  VPOs simplify
the network without modifying the Pareto frontier, leading to significant
reductions on the number of arcs and nodes and hence computational time. We
explore VPOs that are based solely on the network itself (e.g., removing
arcs/nodes, merging nodes) and VPOs that explore 
domain-specific features of the problem (e.g., using DP state-based
information to identify dominance).  

Finally, we present an extensive numerical study on five problem classes to
compare our network model approach with two state-of-the-art MODO algorithms. We
consider the knapsack, set covering, set packing, and the traveling salesperson
problem, which are commonly used as benchmarks in the MODO literature, in
addition to a MODO with nonlinear terms in the objective motivated from an 
application in regression-based models. 

\andre{ Our experiments indicate that the proposed approach outperforms the
state-of-the-art by orders of magnitude \carlos{for problem classes where recursive models and state-space relaxations are known to be effective; in particular, the results show a significant expansion} on the size of problems that
can be solved,  specifically in terms of the number of objective functions (up
to seven). This is a particularly limiting factor in existing approaches that
severely restrict the applicability of multiobjective optimization in real scenarios beyond a
few objectives, as highlighted by \cite{Duro2017}. Examples of applications
where the Pareto frontier for four or more objectives is desired include protein
structure prediction \citep{Regina2013}, computational sustainability
\citep{Gomes2018}, storm drainage and work roll cooling \citep{Duro2017}, to
name a few. In practice, the Pareto frontier fully characterizes the trade-offs
among solutions and is post-processed in interactive decision support systems
\citep{stewart2008real}. This allows practitioners to prioritize objectives and
operational aspects based on their technical expertise, often on a case-by-case
basis, which can be used as opposed to or in conjunction with typical scalarization techniques
when multiple objectives are present.
}

The remainder of this paper is organized as follows.
\S\ref{sec:literatureReview} provides a literature review of MODO, specifically
as it relates to the present paper.
\S\ref{sec:multi-objectiveDiscreteOptimization} and \S\ref{sec:network-models}
formally define MODOs and network models, respectively.
\S\ref{sec:NMConstructionAlgos} describes network construction algorithms.
\S\ref{sec:VPOs} presents VPOs, and  \S\ref{sec:findingTheParetoFrontier}
presents the multicriteria shortest path algorithms we employ for enumerating the Pareto frontier.
\S\ref{sec:copmutationalInsights} describes the results of an experimental
evaluation on four problem classes.
We conclude and describe future work in \S\ref{sec:conclusionAndFutureWork}. Proofs which do not appear in the main text are presented in the appendix, ~\S\ref{sec:proofs}.

There is an extensive literature on exact algorithms for generating the Pareto
frontier of a MODO. In general, these approaches can be divided into two main
classes: those based on \emph{criterion-space search}, and those based on
\emph{decision-space search} \citep{ehrgott2016exact,ehrgott2006discussion}.

Criterion-space search relies on \emph{scalarizations}, most commonly based on
a combination of weighted sums and $\epsilon$-constraints. Weighted-sum methods
iteratively solve a single-objective optimization version of the problem where
the single objective is defined by various positive-weight combinations of the
original objectives. For general MODOs, however, only a portion of the Pareto
frontier (those points referred to as \emph{supported} efficient points) can be
identified by this approach alone. The remaining points (\emph{unsupported}
efficient points) can be found through the $\epsilon$-constraint method,
introduced by \cite{haimes1971bicriterion}, which optimizes one of the original
objective functions with the other constraints transformed into parametrized
constraints. 

\cite{kirlik2014new} and \cite{Ozlen2013} provide the state-of-the-art
criterion-space search algorithms for MODOs with an arbitrary number of
objectives. Both algorithms are based on variants of scalarization techniques.
These variants build upon the early work by \cite{klein1982algorithm}, who
suggested an iterative approach to generate a subset of the Pareto frontier
while refining the search space by the addition of disjunctive constraints.
\cite{sylva2004method} extended this work to an exact algorithm by
reformulating the disjunctive conditions as big-$M$ constraints, which was
further improved by \cite{lokman2013finding} and \cite{bektas2016disjunctive}.
extended to more than two objectives by \cite{tenfelde2003recursive} and
further enhanced by \cite{dhaenens2010k}. Another generalization of the
two-phase method is proposed by \cite{przybylski2010two}.
\cite{ozlen2009multi}, in turn, developed an alternative approach called the
\textit{augmented} $\epsilon$-constraint method, which became one of the
state-of-the-art methods after later enhancements by \cite{Ozlen2013}. Another
improvement to the augmented $\epsilon$-constraint method is the recursive
methodology proposed by \cite{laumanns2005adaptive,laumanns2006efficient},
further refined by \cite{kirlik2014new} into a computationally practical
approach. We note that \cite{boland2016new} developed an extension of the
so-called L-shape search method (specific to triobjective MODOs) that optimizes
a linear function over the set of nondominated points, that can also be used to
enumerate the Pareto frontier.

Other scalarization methods include the (lexicographic or augmented) weighted Tchebycheff scalarization \citep{steuer1983interactive}. The majority of the criterion-space search focuses on \emph{biobjective} problems, where the special structure resulting from only having two objectives can be exploited; see, e.g.,  \cite{ralphs2006improved,sayin2005multiobjective,boland2015criterion,parragh2015branch}. Extensions of these ideas have also been proposed for triobjective MODOs, which iteratively decompose the search space into smaller regions, and apply efficient ways to explore and refine these regions \citep{dachert2015linear,BolandLSM2016,boland2016quadrant}.

Decision-space search methods operate over the space defined by the original decision variables. These techniques are typically based on branch-and-bound search developed for mixed-integer linear programs. The first of such algorithms was proposed by \cite{mavrotas1998branch} for binary MODOs. The algorithm uses an artificial ideal point to define a bounding set, and discovers nondominated points by solving (via a criterion-space search algorithm) the multiobjective linear programs obtained when all binary variables are fixed. \cite{mavrotas2005multi} observed that this branch-and-bound in fact generates a superset of the Pareto frontier, and proposed filtering algorithms to eliminate spurious points. \cite{Vincent2013} later showed that the previous algorithm is still incomplete, and proposed a corrected and improved version for biobjective problems. Other branch-and-bound techniques have also been studied by \cite{masin2008diversity,sourd2008bb}, who suggested enhancements to the bounding aspects. 


The first decision-space algorithm for generic MODO based on branch-and-cut was developed by \cite{jozefowiez2012}, where discrete sets are used for lower bounds.   Recently, \cite{adelgren2017branch} developed a new branch-and-bound algorithm which employs multiobjective extensions of many different aspects of branch-and-bound, such as (primal and dual) presolve, preprocessing, node processing, and dual bounding via cutting planes.

Alternative methods that avoid scalarizations are based on DP and implicit
enumerative methods for pure binary problems (e.g.,
\citealt{bitran1977linear,bitran1982combined}). DP approaches are typically
focused on variants of the multiobjective knapsack problem, such as
\cite{villarreal1981multicriteria} who employed lower and upper bound sets to
eliminate dominated solutions. The authors extended their work to general
stage-wise separable MODOs \citep{villarreal1982multicriteria}.
\cite{klamroth2000dynamic} presented distinct conceptual DP models for several
variants of the multiobjective knapsack problem.
\cite{bazgan2009solving} developed a new DP approach enhanced with complementary
dominance relations for the 0-1 knapsack case. For the biobjective knapsack,
\cite{delort2010using} and \cite{rong2014dynamic} proposed a two-phase algorithm
and a multiobjective DP algorithm, respectively.

Similar to the above DP approaches, the most relevant works to ours also
considered the multiobjective knapsack problem. \cite{captivo2003solving}
proposed a transformation of biobjective 0--1 knapsack problem into a
biobjective shortest path problem, which is solved via an enhanced version of
the labeling algorithm for MSPs. \cite{figueira2010labeling} developed a generic
labeling algorithm for the problem that applies existing reformulations from the literature.
More recently, state reduction techniques for the biobjective case have been
proposed by \cite{rong2011two} and \cite{rong2013reduction}, with algorithmic
enhancements by \cite{figueira2013algorithmic}.

\section{Multiobjective Discrete Optimization Problems}
\label{sec:multi-objectiveDiscreteOptimization}

\andre{
	In this section we present the notation and formalism used throughout
the text. For $a \in \mathbb{N}_+$, we let $[a] := \set{1, 2, \ldots, a}$.
We denote by $\boldsymbol{0}$ and $\boldsymbol{1}$, a vector of
zeroes and ones in appropriate dimention, respectively. The notation
$\mathbb{B} := \{0,1\}$ indicates the Boolean set, while the operator
$(\cdot)^\top$ denotes the transpose. For a given $v \in \mathbb{R}^p$, we
denote the $i$-th component of $v$ by $v_i$ or $(v)_i$, for all $i \in [p]$. 
}

A multiobjective discrete optimization problem (MODO) is of the form
\begin{align*}
\tag{$\modo$}
\max \left \{ f(x) := \left(f_1(x), f_2(x), \dots, f_\nobj(x) \right) \, : \, x \in \xset \right \},
\end{align*}
\andre{where $\xset \subset \mathbb{Z}^n$, $n \in \mathbb{N}_+$, is a bounded} feasible set and $f : \xset \rightarrow \mathbb{R}^\nobj$
maps each solution~$x \in \xset$ into a $\nobj$-dimensional
\emph{objective vector} (or image) $y :=
\big(f_1(x),\hdots,f_\nobj(x)\big)$, with $f_\objindex : \xset
\rightarrow \mathbb{R}$, $k \in \singleindexset{\nobj}$. 

The objective functions are not assumed to have any particular structure, except
that they are well-defined in $\xset$. For any two objective vectors $y, y' \in
\mathbb{R}^\nobj$, we say that $y$ \textit{dominates} $y'$ 
(\carlos{or, alternatively, that $y'$ \textit{is dominated by} $y$}), or simply $y
x\strictlydominates y'$, if (i) $y_\objindex \geq y'_\objindex$ for all
$\objindex \in \singleindexset{\nobj}$, and (ii) there exists at least one index
$\tilde{\objindex}$ for which $y_{\tilde{\objindex}} > y'_{\tilde{\objindex}}$.



The image defined by the set of feasible solutions is
denoted by $\yset := \{ f(x) \,:\, x \in \xset \}$.
An objective vector $y^* \in \yset$ is a \emph{nondominated} point if there exists no other point $y' \in \yset$  for which $y' \strictlydominates y^*$. The set of all nondominated points of $\modo$ is denoted by $\nds$, also referred to as the \textit{Pareto frontier}. The typical goal of a MODO, and the focus of this paper, is to enumerate $\nds$.  
 
\begin{example}\label{ex:modo}
	We consider a set packing instance as a running example, with $\nobj = 3$:
	\begin{align*}
	\max_{x \in \mathbb{B}^7} \quad& \big(f_1(x) = 4x_1 + 5x_2 + 3x_3 + 4x_4 + 2x_5 + 1x_6 + 2x_7,\\
		 \quad& \,f_2(x) = 8x_1 + 7x_2 + 1x_3 + 5x_4 + 3x_5 + 3x_6 + 8x_7,\\
		 \quad& \,f_3(x) = 2x_1 + 6x_2 + 8x_3 + 4x_4 + 6x_5 + 5x_6 + 2x_7 \big)\\
	\textnormal{s.t.} \quad&
								x_1 + x_2 + x_3 \leq 1, \;\; x_2 + x_3  + x_4 \leq 1, \;\; x_4 + x_5 \leq 1, \\
					   \quad&   x_4 + x_6 \leq 1, \;\; x_5 + x_7 \leq 1, \;\; x_6 + x_7 \leq 1.
	\end{align*}
%
	The nondominated set~$\nds$ consists of the four points $y^1 = (6,7,19)$, 
	$y^2 = (8,13,17)$, $y^3 = (7,14,13)$, and $y^4 = (10,21,8)$. They 
are the images, respectively, of the feasible solution vectors
$x^1 = (0,0,1,0,1,1,0)$, $x^2 = (0,1,0,0,1,1,0)$, $x^3 = (1,0,0,0,1,1,0)$, and $x^4 = (1,0,0,1,0,0,1)$.
\hfill $\square$ 
\end{example}


For any $\bar{\yset} \subseteq \yset$, let 
$\nd{\bar{\yset}} := \set{y \in \bar{\yset}: \nexists\,y' \in \bar{\yset} \ \text{with} \ y' \strictlydominates y }$ 
be an operator that returns the set of vectors within $\bar{\yset}$ that are not dominated by any other vector in the same set. Note that $\nds = \nd{\yset}$. This operator	
has been studied in the context of relational database systems, where it is known as the \emph{skyline} operator \citep{borzsony2001skyline}. We refer to the work by \cite{gudala2012} for a review of algorithms to compute $\nd{\cdot}$ and their associated complexity analysis. In particular, for $\nobj = 2$, an efficient implementation of $\nd{\mathcal{S}}$ for a given $\mathcal{S} \subseteq \mathbb{R}^\nobj$ has a worst-case time complexity of $\mathcal{O}(|\mathcal{S}| \log\parentheses{|\mathcal{S}|})$. For $\nobj > 2$, it can be efficiently implemented in $\mathcal{O}\parentheses{|\mathcal{S}|\cdot \parentheses{\log\parentheses{|\mathcal{S}|}}^{\nobj - 2}}$.

%
%
%

\section{Network Models}
\label{sec:network-models}

This paper proposes the use of \textit{network models} to enumerate $\nds$ for
a MODO $\modo$. In our context, a network model is a layered-acyclic multi-digraph
$\networkmodel := \parentheses{\nodes,\arcs}$ with node set $\nodes$ and arc
set $\arcs$. Such a model is equipped with specific structure, properties, and
attributes associated with $\modo$.

The node set $\nodes$ is partitioned into $n+1$ non-empty layers $\nodes := \dot{\bigcup\limits_{j \in \singleindexset{n+1}}} \layer{j}$.  Layers $\layer{1}$ and $\layer{n+1}$ have cardinality one with $\layer{1} := \set{\rootnode}$ and $\layer{n+1} := \set{\terminalnode}$. Nodes $\rootnode$ and $\terminalnode$ are referred to as the \textit{root} node and the \textit{terminal} node, respectively.  The layer index of a node 
$u \in \nodes$ is $\nodelayer{u}$, i.e., $u \in \layer{\nodelayer{u}}$. Each arc $a := \parentheses{\arcroot{a},\arcterminal{a}} \in \arcs$ is directed from its \emph{arc-root} $\arcroot{a} \in \layer{j}$ to its \emph{arc-terminal} $\arcterminal{a} \in \layer{j+1}$ for some $j \in \singleindexset{n}$. The layer of an arc is $\arclayer{a} := \nodelayer{\arcroot{a}}$.  We denote by $\arcs^+(u) := \set{a \in \arcs: \arcroot{a} = u}$ the set of outgoing arcs from $u$ and by $\arcs^-(u) := \set{a \in \arcs: \arcterminal{a} = u}$ the set of incoming arcs to $u$. 

We define $\pathset{u}{v}$ as the set of arc-specified paths from node $u$ to node $v$. The arguments will be omitted when  $u = \rootnode$ and $v = \terminalnode$, i.e., $\Pathset := \pathset{\rootnode}{\terminalnode}$. Each arc $a$ has an \emph{arc-weight} vector $\arcweights{a} \in \mathbb{R}^\nobj$. The arc-weight vectors provide the connection between arc-specified paths in $\networkmodel$ and the objective space of $\modo$.  Any path $p = \parentheses{a_{j_1}, \ldots, a_{j_H}}$ has \emph{path-weight} $\pathweight{p} = \sum\limits_{h \in [1,H]} \arcweights{a_{j_h}}$.

The Pareto frontier of a network model $\networkmodel$ is defined
as
\[
\paretofrontier{\networkmodel} := 
\nd{ 
	\bigcup\limits_{p \in \Pathset} w(p)
}.
\]
A network model $\networkmodel$ is \emph{valid} for a MODO $\modo$ if $\paretofrontier{\networkmodel} = \nds$.  In our structural results, we may operate on distinct valid network models for the same $\modo$, in which case we append a subscript to our notation so as to indicate the network. For example, $\rootnode_{\networkmodel}$ will be used to represent the root node of $\networkmodel$. The subscript will be omitted when~$\networkmodel$ is clear from context.

A network model can be interpreted as a data structure that supports the
representation of a MODO $\modo$ as a multicriteria shortest path problem (MSP). By negating the arc-weight vectors
(since we consider maximization), one may apply any algorithm for solving MSPs
to a network model~$\networkmodel$  in order to find
$\paretofrontier{\networkmodel}$ 
if~$\networkmodel$ is valid for $\modo$. Several MSP algorithms are available;
see, e.g., the survey by \cite{GarGioTav10}.

\begin{example}\label{ex:bdd} Figure~\ref{fig:bddexample} depicts a network model $\networkmodel$ for the MODO in Example~\ref{ex:modo}. 
The arc-weight vectors are shown (in black text) next to each arc (the additional details provided in the figure, in red and blue, will be introduced and described in Example \ref{ex:unidirectional}). There are 14 arc-directed paths from $\rootnode$ to $\terminalnode$, and their path-weights are given by
	\begin{eqnarray*}
		\bigcup_{p \in \Pathset} \arcweights{p} 
			=
			\big\{
				\parentheses{8,13,17},
				\parentheses{6,10,11},
				\parentheses{7,15,8},
				\parentheses{6,7,19},
				\parentheses{4,4,13},
				\parentheses{5,9,10},
				\parentheses{3,6,11},
				\\
				\parentheses{1,3,5},
				\parentheses{2,8,2},
				\parentheses{7,14,13},
				\parentheses{6,13,6},
				\parentheses{5,11,7},
				\parentheses{6,16,4},
				\parentheses{10,21,8}
			\big\}
	\end{eqnarray*}
	and results in 
	\[
\paretofrontier{\networkmodel} = 
\nd{ 
	\bigcup\limits_{p \in \Pathset} w(p)
} = 
	\big\{
		\parentheses{8,13,17},
		\parentheses{6,7,19},
		\parentheses{7,14,13},
		\parentheses{10,21,8}
		\big\}.
\]
\hfill $\square$ 
\end{example}

\begin{figure}[h!]
	\centering
	\begin{tikzpicture}[scale=0.3][font=\sffamily,\tiny]
	\node [scale=1.2] (x1) at (-20,-2) {$x_1$};
	\node [scale=1.2] (x2) at (-20,-8) {$x_2$};
	\node [scale=1.2] (x3) at (-20,-14) {$x_3$};
	\node [scale=1.2] (x4) at (-20,-20) {$x_4$};
	\node [scale=1.2] (x5) at (-20,-26) {$x_5$};
	\node [scale=1.2] (x6) at (-20,-32) {$x_6$};
	\node [scale=1.2] (x7) at (-20,-38) {$x_7$};
	
	\node [draw,circle,scale=1] (r) at (0,0) {$\rootnode$};
	\node [color=red] () at (-3,0) { $\checkmark \parentheses{0,0,0}$ }; 
	\node [color=blue] () at (5.1,2) { $\checkmark  \parentheses{8,13,17}$ }; 
	\node [color=blue] () at (4.85,1) { $\times  \parentheses{7,15,8}$ };
	\node [color=blue] () at (4.9,0) { $\checkmark  \parentheses{6,7,19}$ };
	\node [color=blue] () at (5.1,-1) { $\checkmark  \parentheses{7,14,13}$ };
	\node [color=blue] () at (5.1,-2) { $\checkmark  \parentheses{10,21,8}$ };
	
	\node [draw,circle,scale=1] (a) at (-6,-5) {$u^2_1$};
	\node [color=blue] () at (-0.98,-3.1) { $\checkmark \parentheses{8,13,17}$ }; 
	\node [color=blue] () at (-1.2,-4) { $\checkmark \parentheses{7,15,8}$ }; 
	\node [color=blue] () at (-1.2,-5) { $\checkmark \parentheses{6,7,19}$ }; 
	\node [color=blue] () at (-1.2,-6) { $\times \parentheses{5,9,10}$ }; 
	\node [color=blue] () at (-1.2,-7) { $\times \parentheses{6,13,6}$ }; 
	\node [color=blue] () at (-1.2,-8) { }; 
	\node [color=red] () at (-9,-5) { $\checkmark \parentheses{0,0,0}$ }; 
	
	\node [draw,circle,scale=1] (b) at (6,-5) {$u^2_2$};
	\node [color=red] () at (3,-5) { $\checkmark \parentheses{4,8,2}$ }; 
	\node [color=blue] () at (9,-4.5) { $\checkmark \parentheses{3,6,11}$ }; 
	\node [color=blue] () at (9,-5.5) { $\checkmark \parentheses{6,13,6}$ }; 
	
	\node [draw,circle,scale=1] (c) at (-12,-11) {$u^3_1$};
	\node [color=red] () at (-15,-11) { $\checkmark \parentheses{5,6,7}$ };
	\node [color=blue] () at (-8.8,-10.5) { $\checkmark \parentheses{3,6,11}$ };
	\node [color=blue] () at (-9,-11.5) { $\checkmark \parentheses{2,8,2}$ };
	
	\node [draw,circle,scale=1] (d) at (0,-11) {$u^3_2$};
	\node [color=red] () at (-3,-11) { $\checkmark \parentheses{0,0,0}$ };
	\node [color=blue] () at (3.1,-8.75) { $\checkmark \parentheses{6,7,19}$ };
	\node [color=blue] () at (3.1,-9.75) { $\checkmark \parentheses{5,9,10}$ };
	\node [color=blue] () at (3.1,-10.75) { $\times \parentheses{3,6,11}$ };
	\node [color=blue] () at (3.1,-11.75) { $\checkmark \parentheses{6,13,6}$ };

	\node [draw,circle,scale=1] (e) at (12,-11) {$u^3_3$};
	\node [color=red] () at (9,-11) { $\checkmark \parentheses{4,8,2}$ }; 
	\node [color=blue] () at (15,-10.5) { $\checkmark \parentheses{3,6,11}$ }; 
	\node [color=blue] () at (15,-11.5) { $\checkmark \parentheses{6,13,6}$ }; 
	
	\node [draw,circle,scale=1] (f) at (-6,-17) {$u^4_1$};
	\node [color=red] () at (-9,-16.5) { $\checkmark \parentheses{5,7,6}$ };
	\node [color=red] () at (-9,-17.5) { $\checkmark \parentheses{3,1,8}$ };
	\node [color=blue] () at (-2.8,-16.5) { $\checkmark \parentheses{3,6,11}$ };
	\node [color=blue] () at (-3,-17.5) { $\checkmark \parentheses{2,8,2}$ };
	
	\node [draw,circle,scale=1] (g) at (6,-17) {$u^4_2$};
	\node [color=red] () at (3,-16.5) { $\checkmark \parentheses{4,8,2}$ };
	\node [color=red] () at (3,-17.5) { $\times \parentheses{0,0,0}$ };
	\node [color=blue] () at (9.2,-16) { $\checkmark \parentheses{3,6,11}$ };
	\node [color=blue] () at (9,-17) { $\times \parentheses{2,8,2}$ };
	\node [color=blue] () at (9.2,-18) { $\checkmark \parentheses{6,13,6}$ };
	
	\node [draw,circle,scale=1] (h) at (-6,-23) {$u^5_1$};
	\node [color=red] () at (-9,-22) { $\checkmark \parentheses{5,7,6}$ };
	\node [color=red] () at (-9,-23) { $\checkmark \parentheses{3,1,8}$ };
	\node [color=red] () at (-9,-24) { $\checkmark \parentheses{4,8,2}$ };
	\node [color=blue] () at (-1.5,-22) { $\checkmark \parentheses{3,6,11}$ };
	\node [color=blue] () at (-1.7,-23) { $\checkmark \parentheses{2,8,2}$ };
	\node [color=blue] () at (-1.7,-24) { $\times \parentheses{1,3,5}$ };

	\node [draw,circle,scale=1] (i) at (6,-23) {$u^5_2$};
	\node [color=red] () at (3,-23) { $\checkmark \parentheses{8,13,6}$ };
	\node [color=blue] () at (9,-23) { $\checkmark \parentheses{2,8,2}$ };
	
	\node [draw,circle,scale=1] (j) at (-12,-29) {$u^6_1$};
	\node [color=red] () at (-14.8,-28) { $\checkmark \parentheses{7,10,12}$ };
	\node [color=red] () at (-15,-29) { $\checkmark \parentheses{5,4,14}$ };
	\node [color=red] () at (-15,-30) { $\checkmark \parentheses{6,11,8}$ };
	\node [color=blue] () at (-9,-29) { $\checkmark \parentheses{1,3,5}$ };
	
	\node [draw,circle,scale=1] (k) at (0,-29) {$u^6_2$};
	\node [color=red] () at (-3,-28) { $\checkmark \parentheses{5,7,6}$ };
	\node [color=red] () at (-3,-29) { $\checkmark \parentheses{3,1,8}$ };
	\node [color=red] () at (-3,-30) { $\checkmark \parentheses{4,8,2}$ };
	\node [color=blue] () at (3,-28.5) { $\checkmark \parentheses{2,8,2}$ };
	\node [color=blue] () at (3,-29.5) { $\checkmark \parentheses{1,3,5}$ };
	
	\node [draw,circle,scale=1] (l) at (12,-29) {$u^6_3$};
	\node [color=red] () at (9,-29) { $\checkmark \parentheses{8,13,6}$ };
	\node [color=blue] () at (14.8,-29) { $\checkmark \parentheses{2,8,2}$ };
	
	\node [draw,circle,scale=1] (m) at (-6,-35) {$u^7_1$};
	\node [color=red] () at (-11.8,-32.5) { $\checkmark \parentheses{8,13,17}$ };
	\node [color=red] () at (-12,-33.5) { $\checkmark \parentheses{6,7,19}$ };
	\node [color=red] () at (-11.8,-34.5) { $\checkmark \parentheses{7,14,13}$ };
	\node [color=red] () at (-11.9,-35.5) { $\times \parentheses{6,10,11}$ };
	\node [color=red] () at (-12.1,-36.5) { $\times \parentheses{4,4,13}$ };
	\node [color=red] () at (-12.1,-37.5) { $\times \parentheses{5,11,7}$ };
	\node [color=blue] () at (-3,-35) { $\checkmark \parentheses{0,0,0}$ };
	
	\node [draw,circle,scale=1] (n) at (6,-35) {$u^7_2$};
	\node [color=red] () at (2.1,-33.5) { $\times \parentheses{5,7,6}$ };
	\node [color=red] () at (2,-34.5) { $\checkmark \parentheses{3,1,8}$ };
	\node [color=red] () at (2,-35.5) { $\times \parentheses{4,8,2}$ };
	\node [color=red] () at (2.14,-36.5) { $\checkmark \parentheses{8,13,6}$ };
	\node [color=blue] () at (8.8,-35) { $\checkmark \parentheses{2,8,2}$ };
	
	\node [draw,circle,scale=1] (t) at (0,-41) {$\terminalnode$};
	\node [color=red] () at (-4.8,-39) { $\checkmark \parentheses{8,13,17}$ };
	\node [color=red] () at (-5,-40) { $\checkmark \parentheses{6,7,19}$ };
	\node [color=red] () at (-4.9,-41) { $\checkmark \parentheses{7,14,13}$ };
	\node [color=red] () at (-5.15,-42) { $\times \parentheses{5,9,10}$ };
	\node [color=red] () at (-4.9,-43) { $\checkmark \parentheses{10,21,8}$ };
	\node [color=blue] () at (2.8,-41) { $\checkmark \parentheses{0,0,0}$ };
	
	
	\path[->](r) edge node [xshift=-3.4mm,yshift=-2.6mm,left] {$\parentheses{0,0,0}$} (a);
	\path[->](r) edge node [xshift=3.5mm,yshift=-2.6mm,right] {$\parentheses{4,8,2}$} (b);
	
	\path[->](a) edge node [xshift=-2mm,yshift=-1mm,left] {$\parentheses{5,7,6}$} (c);
	\path[->](a) edge node [xshift=0.7mm,yshift=-1mm,left] {$\parentheses{0,0,0}$} (d);
	\path[->](b) edge node [xshift=2.3mm,yshift=-1mm,right] {$\parentheses{0,0,0}$} (e);
	
	\path[->](c) edge node [left] {$\parentheses{0,0,0}$} (f);
	\path[->](d) edge node [left] {$\parentheses{3,1,8}$} (f);
	\path[->](d) edge node [right] {$\parentheses{0,0,0}$} (g);
	\path[->](e) edge node [right] {$\parentheses{0,0,0}$} (g);
	
	\path[->](f) edge node [left] {$\parentheses{0,0,0}$} (h);
	\path[->](g) edge node [left] {$\parentheses{0,0,0}$} (h);
	\path[->](g) edge node [right] {$\parentheses{4,5,4}$} (i);
	
	\path[->](h) edge node [left] {$\parentheses{2,3,6}$} (j);
	\path[->](h) edge node [right] {$\parentheses{0,0,0}$} (k);
	\path[->](i) edge node [right] {$\parentheses{0,0,0}$} (l);
	
	\path[->](j) edge node [right] {$\parentheses{1,3,5}$} (m);
	\path[->](k) edge node [right] {$\parentheses{1,3,5}$} (m);
	\path[->](k) edge node [right] {$\parentheses{0,0,0}$} (n);
	\path[->](l) edge node [right] {$\parentheses{0,0,0}$} (n);
	
	\path[->](m) edge node [right] {$\parentheses{0,0,0}$} (t);
	\path[->](n) edge node [right] {$\parentheses{2,8,2}$} (t);

	\end{tikzpicture}
	\caption{A valid network model for $\modo$ in Example~\ref{ex:modo}.}
	\label{fig:bddexample}
\end{figure}

\section{Network Model Construction}
\label{sec:NMConstructionAlgos}

\andre{
This section provides two frameworks for constructing valid network models for
MODOs.  The first approach relies on a recursive model of $\modo$.  The second is
a direct transformation from decision diagrams, which is
applicable when the objective functions are additively separable.
}

\subsection{Recursive Formulations}
\label{sec:recursiveModeling}

Several classes of single-objective optimization problems admit recursive
formulations, often written as DP models \citep{bertsimas1997introduction}.
These ideas were extended to the case of multiple objectives in the early work
by \cite{villarreal1982multicriteria}, which we build upon in this paper. In
particular, while DP models are intrinsically associated with a state-transition
graph, we show that multiobjective recursive formulations are analogously
associated with a valid network model.

\andre{
Formally, a multiobjective recursive formulation of a MODO $\modo$ is written in
terms of the following elements as depicted in Figure
\ref{fig:recursiongraph}: (i) $n+1$ state variables $s_0, s_1, \dots, s_n \in
\statespace$ for some state space $\statespace \subseteq \mathbb{R}^m$, where
the initial state $s_0$ is fixed; (ii) $n$ functions
$\feasiblevaluestate_1, \dots, \feasiblevaluestate_n : \statespace \rightarrow
2^{\mathbb{Z}}$ that
represent the variable-value assignments that can be applied at a state (i.e.,
they provide state-dependent feasible decisions); (iii) $n$ state transition
functions $\transition_1, \dots, \transition_n : \statespace \times \mathbb{Z}
\rightarrow \statespace$, each mapping a pair $(s,x)$ of a state $s \in
\statespace$ and a variable-value assignment $x \in \mathbb{Z}$ to another state
$s' \in \statespace$; and (iv) $n$ reward functions $\reward_1, \dots, \reward_n
: \statespace \times \mathbb{Z} \rightarrow \mathbb{R}^K$ that map an analogous
pair $(s,x)$ to a reward vector in $\mathbb{R}^K$. 
}

\begin{figure}[h]
	\centering
	\begin{tikzpicture}[scale=0.3][font=\sffamily, \small]
	
	\node [draw,rectangle] (s0) {$s_0$};
	\node [draw,rectangle,right of=s0,xshift=2.6cm] (s1) {$s_1 = \transition_1(s_0,x_1)$};
	\node [draw,rectangle,right of=s1,xshift=3.5cm] (s2) {$s_2 = \transition_2(s_1,x_2)$};
	\node [right of=s2,xshift=1cm] (dummy) {$\cdots$};
	\node [draw,rectangle,right of=dummy,xshift=3.5cm] (sn) {$s_n = \transition_n(s_{n-1},x_n)$};
	
	\path[->,>=stealth,line width=1pt] (s0) edge node[above] {$x_1 \in \feasiblevaluestate_1(s_0)$} (s1);
	\path[->,>=stealth,line width=1pt] (s1) edge node[above] {$x_2 \in \feasiblevaluestate_2(s_1)$} (s2);
	\path[->,>=stealth,line width=1pt] ([xshift=0.9cm] dummy.east) edge node[above] {$x_n \in \feasiblevaluestate_n(s_{n-1})$} (sn);
	
	\node [yshift=-1.5cm] (r1) at ($(s0.east)!0.5!(s1.west)$) {$\reward_1(s_0,x_1)$};
	\node [yshift=-1.5cm] (r2) at ($(s1.east)!0.5!(s2.west)$) {$\reward_2(s_1,x_2)$};
	\node [yshift=-1.5cm] (rn) at ($([xshift=0.9cm] dummy.east)!0.5!(sn.west)$) {$\reward_n(s_{n-1},x_n)$};
	
	\draw[->,dashed,thick] ([yshift=-0.4cm] s0 -| r1) -- (r1);
	\draw[->,dashed,thick] ([yshift=-0.4cm] s1 -| r2) -- (r2);
	\draw[->,dashed,thick] ([yshift=-0.4cm] s2 -| rn) -- (rn);
	
	\end{tikzpicture}
	\caption{Elements of a recursive formulation}
	\label{fig:recursiongraph}
\end{figure}
Based on these four components, a multiobjective recursive problem is of the form
\begin{align*}
\tag{$\modorec$}
\max_{s,x} 
\quad& 
\sum_{j=1}^n \reward_j(s_{j-1}, x_j) \\
\textnormal{s.t.}	\quad & s_j = \transition_j(s_{j-1}, x_j), &\textnormal{for all $j \in [n],$}\\
& x_j \in \feasiblevaluestate_j(s_{j-1}), &\textnormal{for all $j \in [n].$}
\end{align*}
In particular, each objective function evaluation is represented by the $k$-th index of the total reward tuple $\sum_{j=1}^{n} \reward_j(s_{j-1}, x_j)$ for $k \in [\nobj]$, i.e.,
\[
f_\objindex(s,x) := \left( \sum_{j=1}^n \reward_j(s_{j-1}, x_j) \right)_\objindex,
\]
while the feasible set of $\modorec$ is  
\andre{
\[
\xset := \left\{ (s,x) \in \statespace^{n+1} \times \mathbb{Z}^n \,:\, s_j = \transition_j(s_{j-1}, x_j), \ x_j \in \feasiblevaluestate_j(s_{j-1}) \;\; \ \text{for all} \ \,j \in [n] \right \}.
\]
}
Assume, without loss of generality, that $\transition_n(s,x) = s_\terminalnode$
for a fixed $s_\terminalnode$ and for all $s \in \statespace$, $x \in
\mathbb{Z}$. That is, the final transition always leads to a common
\textit{terminal} state $s_\terminalnode$, which can always be accomplished by
appropriately defining $\transition_n(\cdot)$. If a MODO can be written in the
form of $\modorec$, it exposes a recursive structure that can be immediately
leveraged for the construction of a network model. More specifically, the
network $\networkmodel = (\nodes, \arcs)$ is the state-transition graph defined
by the feasible set $\feasibleset$, which is composed as follows (see, e.g.,
\citealt{Bertsekas2017} for algorithmic details):

\begin{itemize}
	\item Node set $\nodes$: The nodes of $\networkmodel$ are associated with nodes of the state-transition graph. For any $j \in [n+1]$, there exists a node $u \in \layer{j}$ for every possible value of the state variable $s_{j-1}$ in $\feasibleset$. More specifically, at layer $j$ of $\networkmodel$, there exists one node for each state in the set $\text{Proj}_{s_{j-1}}(\feasibleset)$, defined as the projection of the feasible set $\feasibleset$ into the space of the variable $s_{j-1}$. With a slight abuse of notation, we let $\layer{j} = \{ u^j_1, \hdots, u^j_{|\layer{j}|} \} = \text{Proj}_{s_{j-1}}(\feasibleset)$. Note that since $s_0$ and $s_\terminalnode$ are fixed, layers $\layer{1}$ and $\layer{n+1}$ are singletons; namely $u^1_1 = s_0$ and $u^{n+1}_1 = s_\terminalnode$.
	\item Arc set $\arcs$: The arcs of $\networkmodel$ are associated with arcs of the state-transition graph. In other words, there exists an arc in $\networkmodel$ for every possible transition in the state-transition graph. Again with a slight abuse of notation, we represent an arc as a triplet, rather than a pair, $(\arcroot{a},\arcterminal{a},x)$ appending the variable-value information. Then, we have
	$$\arcs = \dot{\bigcup\limits_{j \in [n]}} \left\{ (u_i^j,u_{i'}^{j+1},x) : u_i^j \in \layer{j}, \ u_{i'}^{j+1} \in \layer{j+1}, \ x \in \feasiblevaluestate_j(u_i^j) \ \text{with} \ u_{i'}^{j+1} = \transition_j(u_i^j,x) \right\}.$$
	\item Arc weights: The reward functions provide the arc weights for $\networkmodel$. That is, if $a = (u_i^j,u_{i'}^{j+1},x) \in \arcs$, then 
$\arcweights{a} = \reward_{j}(u_i^j, x)$.
\end{itemize}  

It follows from the definition of arc weights that a path weight corresponds to the objective function in $\modorec$ and, by construction of the state-transition graph, that there is a one-to-one mapping between paths of $\networkmodel$ and solutions in $\feasibleset$. Thus, $\networkmodel$ is a valid network model for $\modorec$.

\begin{example}
\label{ex:modorec}

The instance in Example \ref{ex:modo} is a set packing instance, so it can be written as
\begin{align*}
\max \left \{ \left( (c^1)^\top x, (c^2)^\top x, \dots, (c^\nobj)^\top x \right) : Ax \le \boldsymbol{1}, x \in \mathbb{B}^n \right \}
\end{align*}
for an $m \times n$ matrix $A$ with elements $a_{ij} \in \mathbb{B}^n$, $i \in [m], j \in [n]$,
and cost vectors $c^j \in \mathbb{R}^n$, $j \in [n]$. 

A recursive formulation can be obtained 
as follows. A state variable $s \in \mathbb{B}^m$ is defined so that for any $i \in [m]$, $s_i = 1$ if and only if the $i$-th constraint of $Ax \le \boldsymbol{1}$ is satisfied as an equality or all the decision variables appearing in the constraint have already been assigned to zero. Therefore, we have the initial state $s_0 = \boldsymbol{0}$, whereas any final transition will lead to the terminal state $s_{\terminalnode} = \boldsymbol{1}$. Variable-value assignments $x_j \in \mathbb{B}$ represent the packing of element~$j$. We are not allowed to set a variable $x_j = 1$ if any constraint which includes $x_j$ holds as an equality; i.e., 
\[
\feasiblevaluestate_j(s) := \{ b \in \mathbb{B} \,:\, s_i + b \le 1 \ \text{for all} \ \,i \in [m] \ \text{with} \ a_{ij} = 1 \}.
\] 
The transition function ensures the consistency of the current state $s$ and the next state $\tilde{s}$ when setting $x_j = b$, $b \in \feasiblevaluestate_j(s)$. Specifically, let $m_i := \argmax \{ j \in [n] : a_{ij} = 1\}$ be the maximum index of the variables with a nonzero coefficient in the $i$-th constraint. Then, we have 
$\transition_j(s, b) = \tilde{s}$ where 
$$\tilde{s}_i = 
\left\{
		\begin{array}{lll}
			1, & \mbox{if } j = m_i \\
			s_i + b, & \mbox{if } j < m_i \ \text{and} \ a_{ij} = 1 \\
			s_i, & \mbox{otherwise}
		\end{array}
	\right.
\qquad \text{for all} \ i \in [m],
$$
for any $j \in [n]$. Finally, the reward function for each $j \in [n]$ is the contribution of $x_j$ to the objective function vector, which in this particular instance is state independent:
$\reward_j(s, b) := b \times \left( c^1_j, \, c^2_j, \, \dots, c^\nobj_j \right)$. 

For our particular instance in Example \ref{ex:modo}, this recursive formulation yields the network model depicted in Figure \ref{fig:recursivemodelexample} in the appendix. We note that this network model matches the one given in Figure \ref{fig:bddexample} until layer six (inclusive). The network model in Figure \ref{fig:bddexample} can be obtained from the one in Figure \ref{fig:recursivemodelexample} after applying Theorem \ref{thm:reduce} and Corollary \ref{cor:arcremoval} on layer seven.
\hfill $\square$
\end{example}

\subsection{Transformation from Decision Diagrams}
\label{sec:bddTransformation}

\andre{ 
A decision diagram (DD) is a network-based representation of a Boolean
or a discrete function with a large list of applications in mathematical
programming, operations research, and circuit design (see, e.g.,
\citealt{Bry92, Beh07,HadHoo07,BerHoeHoo11}). In such applications, DDs are used to
compactly represent or approximate the set of feasible solutions to a discrete
optimization problem. We refer to the work by \cite{bergman2016decision} for a
survey of concepts and existing DD methodologies for optimization. 
}

\andre{
Each layer in a DD $\mathcal{D}$, except the last one,
corresponds to a unique decision variable mapped via the bijective function $y:
[n] \rightarrow [n]$; that is, layer $l \in [n]$ corresponds to variable
$x_{y(l)}$. Let~$\Pathset_\mathcal{D}$ be the set of paths from root to
terminal in $\mathcal{D}$. The arc-domains associate each path $p = (a_1,
\ldots, a_n)$ in~$\Pathset_\mathcal{D}$ with a solution vector $x(p) \in
\mathbb{Z}^n$ defined by $x(p)_{y(\ell(a_j))} = \arcdomain{a_j}$ for all $j \in
[n]$. In particular, for a given feasible set $\xset \subset \mathbb{Z}^n$,
$\mathcal{D}$ is said to be \emph{exact} for $\xset$ if and only if $x(p) \in
\xset$ for all $p \in \Pathset_\mathcal{D}$ and $|\Pathset_\mathcal{D}| = |\xset|$. That
is, the paths of $\mathcal{D}$ encode $\xset$ exactly. \looseness=-1
}
\carlos{
In summary, decision diagrams are closely related to network models, but observe  
that arc-domains are exclusive to the former whereas arc-weight vectors it is are defined only in the latter.
}

Assume that, for a given MODO $\modo$, the objective function
$f_\objindex(\cdot)$ is separable for all $\objindex \in [\nobj]$; i.e.,
$$f_{\objindex}(x) := \sum_{j \in [n]} f_{k,j}(x_j)$$ for an appropriate choice
of functions $f_{k,j} : \mathbb{Z} \rightarrow \mathbb{R}$, $j \in [n]$. In
that case, we can transform an exact DD  \carlos{representing $\modo$ } into a network model $\networkmodel$
simply by defining suitable arc weights and removing the arc-domain labels.
More specifically, for every arc $a \in \arcs$, we define its weight according
to the variable corresponding to the layer of that arc, $x_{y(\ell(a))}$, and its
assigned value, $\arcdomain{a}$, as $\arcweights{a} :=
\parentheses{f_{1,y(\ell(a))}(\arcdomain{a}), \ldots,
f_{\nobj,y(\ell(a))}(\arcdomain{a})}.$ The
assignment of those weights to arcs implies that, for any path $p = (a_1,
\ldots, a_n)$ in the network, \[ w(p) = \sum_{j \in [n]}
\parentheses{f_{1,y(\ell(a_j))}(\arcdomain{a_j}), \ldots,
f_{\nobj,y(\ell(a_j))}(\arcdomain{a_j})} = f(x(p)), \] thus
$\paretofrontier{\networkmodel}$ is the Pareto frontier of $\modo$. This
relation was first used in \cite{BerCir16}.

\andre{
There exists an extensive number of DD construction algorithms for discrete
optimization problems with a (single) separable objective function
\citep{bergman2016decision}, some of which will be employed for our numerical
study in Section \ref{sec:copmutationalInsights}. Note, however, that our
proposed network models, after modifications by VPOs, do not necessarily map to
valid exact DDs. Indeed, our network operators are guaranteed only to preserve
the Pareto frontier, and as such the diagram is typically modified to readjust
weights, merge nodes, include infeasible paths, or exclude feasible paths to
allow for smaller network sizes (hence not satisfying the basic properties of
an exact DD).
}




As mentioned in Example \ref{ex:modorec}, our running Example \ref{ex:modo} is
a set packing instance. \S4 of \cite{bergman2016decision} provides a DD
representation for the set packing problem that can be readily transformed into
a network model by applying the methodology described above.

\section{Validity-preserving Operations}
\label{sec:VPOs}

This section is devoted to the study of generic algorithms, denoted by validity
preserving operations (VPOs), to transform a network $\networkmodel$ into a
smaller network $\networkmodel'$ such that $\paretofrontier{\networkmodel} =
\paretofrontier{\networkmodel'}$. For the structural results below, we assume
that an initial valid network model $\networkmodel = (\nodes, \arcs)$ is
available.

\subsection{Weight Shifting and Node Merging} 
\label{sec:reduction}


We extend the classical concept of \textit{reduction} proposed by \cite{Bry86}
as a VPO for network models. Reduction is an operation applied to DDs that
merge nodes which share isomorphic subgraphs. \cite{Hooker2013} later extended
this notion for DPs with state-dependent rewards, showing that further merging
can be achieved for single-objective problems by considering isomorphism with
respect to arc weights.

Based on this concept, we define a weight-shifting operation that, given a node $u$, transfers weights from the arcs directed out of $u$ (i.e., $\arcs^+(u)$) to those directed into $u$ (i.e., $\arcs^-(u)$). 
 
\begin{proposition}[\bf Weight shift]
	\label{prop:shift}
	Let $u \in \nodes$ be a node in $\networkmodel$ such that $u  \notin \set{\rootnode, \terminalnode}$. For any $\tilde{c} \in \mathbb{R}^\nobj$, 
	the operations defined by (1) and (2) below is a VPO:
	\begin{enumerate}
		\item $\arcweights{a} := \arcweights{a} - \tilde{c}$ for all $a \in A^+(u)$; and
		\item $\arcweights{a} := \arcweights{a} + \tilde{c}$ for all $a \in A^-(u)$.
	\end{enumerate}
\end{proposition}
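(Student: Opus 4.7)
The plan is to show that the weight-shifting operation does not change the weight of any root-to-terminal path, from which equality of Pareto frontiers follows immediately since the node and arc sets (and hence the path set $\Pathset$) are unchanged.

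First I would let $\networkmodel'$ denote the network resulting from applying operations (1) and (2), and denote by $w'(\cdot)$ the modified arc-weight function. Since only arc weights are altered, $\Pathset_\networkmodel = \Pathset_{\networkmodel'}$, so it suffices to show that $w(p) = w'(p)$ for every $p \in \Pathset$. I would then split into two cases based on whether the path $p$ visits $u$.

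The key structural observation is that $\networkmodel$ is a layered acyclic multi-digraph with a unique root $\rootnode$ and terminal $\terminalnode$, and $u \notin \{\rootnode, \terminalnode\}$ lies on some intermediate layer. Hence if a path $p$ passes through $u$, it must enter $u$ via exactly one arc $a^- \in \arcs^-(u)$ and leave $u$ via exactly one arc $a^+ \in \arcs^+(u)$; no other arcs of $p$ lie in $\arcs^-(u) \cup \arcs^+(u)$. In that case,
\[
w'(p) = w(p) - w(a^+) + w'(a^+) - w(a^-) + w'(a^-) = w(p) - \tilde{c} + \tilde{c} = w(p).
\]
If $p$ does not pass through $u$, then $p$ uses no arc in $\arcs^-(u) \cup \arcs^+(u)$, so trivially $w'(p) = w(p)$.

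Since path-weights are pointwise preserved, we have $\bigcup_{p \in \Pathset} w(p) = \bigcup_{p \in \Pathset} w'(p)$, and applying the $\nd{\cdot}$ operator to both sides yields $\paretofrontier{\networkmodel} = \paretofrontier{\networkmodel'}$, which is the definition of a VPO. The only mild subtlety (hardly an obstacle) is justifying that no path can use two distinct arcs of $\arcs^-(u)$ or of $\arcs^+(u)$; this follows directly from the layered acyclic structure, since any root-to-terminal path visits each layer exactly once and therefore each node at most once.
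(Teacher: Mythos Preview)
Your proof is correct and follows exactly the same approach as the paper: the paper's proof is the single sentence ``Arc-weight $w(p)$ remains unchanged for all $p \in \Pathset$ and thus $\paretofrontier{\networkmodel}$ is unchanged,'' and your argument simply spells out why $w(p)$ is preserved (the $-\tilde{c}$ on the unique outgoing arc cancels the $+\tilde{c}$ on the unique incoming arc, using the layered structure). There is no substantive difference.
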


	\begin{proof}
	 Arc-weight $w(p)$ remains unchanged for all $p \in \Pathset$ and thus $\paretofrontier{\networkmodel}$ is unchanged.
	\end{proof}
	
	\begin{example}
	
  Consider the network model in Figure~\ref{fig:bddexample}. Let $\tilde{c} =
  (2,8,2)$. The network model resulting from subtracting $\tilde{c}$ from
  arc-weight $\arcweights{u_2^7,\terminalnode}$ and adding $\tilde{c}$ to arcs
  $\arcweights{u_2^6,u_2^7}$ and $\arcweights{u_3^6,u_2^7}$ corresponds to the 
  operations in Proposition \ref{prop:shift}. The last
  three layers of $\networkmodel$ are reproduced in
  Figure~\ref{fig:costShift1}. The result of applying the weight shift is
  depicted in Figure~\ref{fig:costShift2}.
		\hfill $\square$
	\end{example}
	
\begin{figure}[h]
	\centering
	\begin{minipage}[t]{0.3\textwidth}
		\centering
	\begin{tikzpicture}[scale=0.3][font=\sffamily,\tiny]

	\node [draw,circle,scale=1] (j) at (-6,-29) {$u^6_1$};
	
	\node [draw,circle,scale=1] (k) at (0,-29) {$u^6_2$};
	
	\node [draw,circle,scale=1] (l) at (6,-29) {$u^6_3$};
	
	\node [draw,circle,scale=1] (m) at (-3,-35) {$u^7_1$};
	
	\node [draw,circle,scale=1] (n) at (3,-35) {$u^7_2$};
	
	\node [draw,circle,scale=1] (t) at (0,-41) {$\terminalnode$};
	

	\path[->](j) edge node [xshift=-2mm,yshift=2mm,left] {$\parentheses{1,3,5}$} (m);
	\path[->](k) edge node [xshift=1mm,yshift=2mm,left] {$\parentheses{1,3,5}$} (m);
	\path[->](k) edge node [xshift=-1mm,yshift=2mm,right] {$\parentheses{0,0,0}$} (n);
	\path[->](l) edge node [xshift=2mm,yshift=2mm,right] {$\parentheses{0,0,0}$} (n);
	
	\path[->](m) edge node [left] {$\parentheses{0,0,0}$} (t);
	\path[->](n) edge node [right] {$\parentheses{2,8,2}$} (t);

		\end{tikzpicture}
		\caption{Original arc-weights}
		\label{fig:costShift1}
	\end{minipage}
	\hfill
	\begin{minipage}[t]{0.3\textwidth}
				\centering
	\begin{tikzpicture}[scale=0.3][font=\sffamily,\tiny]

	\node [draw,circle,scale=1] (j) at (-6,-29) {$u^6_1$};
	
	\node [draw,circle,scale=1] (k) at (0,-29) {$u^6_2$};
	
	\node [draw,circle,scale=1] (l) at (6,-29) {$u^6_3$};
	
	\node [draw,circle,scale=1] (m) at (-3,-35) {$u^7_1$};
	
	\node [draw,circle,scale=1] (n) at (3,-35) {$u^7_2$};
	
	\node [draw,circle,scale=1] (t) at (0,-41) {$\terminalnode$};
	

	\path[->](j) edge node [xshift=-2mm,yshift=2mm,left] {$\parentheses{1,3,5}$} (m);
	\path[->](k) edge node [xshift=1mm,yshift=2mm,left] {$\parentheses{1,3,5}$} (m);
	\path[->](k) edge node [xshift=-1mm,yshift=2mm,right] {$\parentheses{2,8,2}$} (n);
	\path[->](l) edge node [xshift=2mm,yshift=2mm,right] {$\parentheses{2,8,2}$} (n);
	
	\path[->](m) edge node [left] {$\parentheses{0,0,0}$} (t);
	\path[->](n) edge node [right] {$\parentheses{0,0,0}$} (t);

	\end{tikzpicture}
	\caption{Weight shift by $\tilde{c} = (2,8,2)$ at node $u_2^7$}
	\label{fig:costShift2}
	\end{minipage}
	\hfill
	\begin{minipage}[t]{0.3\textwidth}
		\centering
	\begin{tikzpicture}[scale=0.3][font=\sffamily,\tiny]

	\node [draw,circle,scale=1] (j) at (-6,-29) {$u^6_1$};
	
	\node [draw,circle,scale=1] (k) at (0,-29) {$u^6_2$};
	
	\node [draw,circle,scale=1] (l) at (6,-29) {$u^6_3$};
	
	\node [draw,circle,scale=1] (o) at (0,-35) {$u^7_1$};
	
	\node [draw,circle,scale=1] (t) at (0,-41) {$\terminalnode$};
	

	\path[->](j) edge node [xshift=-2mm,yshift=2mm,left] {$\parentheses{1,3,5}$} (o);
	\path[->](k) edge [bend right] node [xshift=1mm,yshift=4mm,left] {$\parentheses{1,3,5}$} (o);
	\path[->](k) edge [bend left] node [xshift=-1mm,yshift=4mm,right] {$\parentheses{2,8,2}$} (o);
	\path[->](l) edge node [xshift=2mm,yshift=2mm,right] {$\parentheses{2,8,2}$} (o);
	
	\path[->](o) edge node [right] {$\parentheses{0,0,0}$} (t);

	\end{tikzpicture}
	\caption{Result of merge operation on nodes~$u_1^7$ and~$u_2^7$}
	\label{fig:costShift3}
	\end{minipage}
\end{figure}

We now present a sufficient
condition for when merging nodes is a VPO.
	
\begin{theorem}[Node merge]
	\label{thm:reduce}
	Let $u_1,u_2 \in \nodes$, $u_1 \neq u_2$, be two nodes in $\networkmodel$ for which there exists a one-to-one mapping between the arcs in $\arcs^+(u_1)$ and $\arcs^+(u_2)$ satisfying that, for every pair of arcs $(a_1,a_2)$ such that $a_1 \in \arcs^+(u_1)$ maps to $a_2 \in\arcs^+(u_2)$, we have $\arcterminal{a_1} = \arcterminal{a_2}$ and $w(a_1) = w(a_2)$.  Then, the following sequence of operations characterizes a VPO:
	\begin{enumerate}
		\item Delete all arcs in $\arcs^+(u_2)$;
		\item Redefine $\arcterminal{a} = u_1$ for all $a \in \arcs^-(u_2)$; and
		\item Delete $u_2$,
	\end{enumerate}
\end{theorem}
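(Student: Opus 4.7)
The plan is to show that the three-step operation leaves the collection $\bigcup_{p \in \Pathset} w(p)$ of root-to-terminal path weights unchanged, from which $\paretofrontier{\networkmodel'} = \paretofrontier{\networkmodel}$ follows immediately from the definition of $\paretofrontier{\cdot}$.

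First I would observe that the hypothesis forces $u_1$ and $u_2$ to lie in the same layer: since $\networkmodel$ is layered-acyclic, every outgoing arc $a \in \arcs^+(u_i)$ satisfies $\arcterminal{a} \in \layer{\nodelayer{u_i}+1}$, so the condition $\arcterminal{a_1} = \arcterminal{a_2}$ on each matched pair yields $\nodelayer{u_1} = \nodelayer{u_2}$. In particular, no $\rootnode$-$\terminalnode$ path in $\networkmodel$ visits both $u_1$ and $u_2$, and I can partition $\Pathset_\networkmodel$ into three classes according to whether a path meets (i) neither of $u_1,u_2$, (ii) $u_1$ only, or (iii) $u_2$ only.

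Next I would note that the operation modifies only arcs in $\arcs^-(u_2) \cup \arcs^+(u_2)$ and then deletes $u_2$, so every path in classes (i) and (ii) survives verbatim in $\networkmodel'$ with the same weight. The core of the argument is a weight-preserving bijection between class (iii) and the new paths in $\networkmodel'$ that pass through $u_1$ via a redirected in-arc. Any $p$ in class (iii) factors uniquely as $p = (a_1, \dots, a_{h-1}, a^-, a^+, a_{h+2}, \dots, a_n)$ with $a^- \in \arcs^-(u_2)$ and $a^+ \in \arcs^+(u_2)$. Let $\hat{a}^+ \in \arcs^+(u_1)$ be the mate of $a^+$ under the given one-to-one correspondence. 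After the operation $a^-$ terminates at $u_1$ and $\hat{a}^+$ is still present, so $p' := (a_1, \dots, a_{h-1}, a^-, \hat{a}^+, a_{h+2}, \dots, a_n)$ is a valid $\rootnode$-$\terminalnode$ path in $\networkmodel'$, and $w(p') = w(p)$ because $w(\hat{a}^+) = w(a^+)$ and $\arcterminal{\hat{a}^+} = \arcterminal{a^+}$.

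To close the argument I would verify the converse direction: every path in $\networkmodel'$ either avoids the redirected arcs (and then matches a class (i) or class (ii) path of $\networkmodel$), or uses exactly one redirected in-arc (originally in $\arcs^-(u_2)$) followed by some $\hat{a}^+ \in \arcs^+(u_1)$, in which case reversing the construction above yields a unique class (iii) path of $\networkmodel$ with identical weight. The main subtlety is guarding against spurious paths formed by combining a redirected in-arc with a ``wrong'' out-arc of $u_1$—but since the one-to-one correspondence on $\arcs^+(u_1) \leftrightarrow \arcs^+(u_2)$ preserves both weights and terminals, every such combination matches exactly one original path through $u_2$, so no new weight is introduced and none is lost.
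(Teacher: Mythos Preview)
Your proposal is correct and follows essentially the same approach as the paper: both arguments establish that the multiset of root-to-terminal path weights is unchanged by exhibiting, for each path through $u_2$, a weight-equal path through $u_1$ obtained by replacing the outgoing arc with its mate under the given bijection, and then checking that no new paths arise. Your treatment is in fact more careful than the paper's, which handles the converse direction in a single sentence; your explicit observation that $u_1$ and $u_2$ must lie in the same layer, and your discussion of why redirected in-arcs combined with arbitrary out-arcs of $u_1$ cannot create spurious weights, fill in details the paper leaves implicit.
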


\begin{example}
	Figure~\ref{fig:costShift3} depicts the result of merging nodes $u_1^7$ with $u_2^7$ in Figure~\ref{fig:costShift2}.  Note that the resulting network model has fewer arcs and nodes, but the same number of paths.  Such an operation thereby decreases the size of the network model without altering  $\paretofrontier{\networkmodel}$.
	\hfill $\square$
\end{example}

Proposition~\ref{prop:shift} and Theorem~\ref{thm:reduce} define a strategy for
simplifying a network model. In particular, starting from the penultimate layer
and moving upwards in the network, we process each node in the layer in the
following way. We construct the vector~$\tilde{c}(u)$ for each node~$u$ in the
inspected layer by taking the component-wise minimum arc-weight among arcs in
$\arcs^{+}(u)$. Once~$\tilde{c}(u)$ has been obtained, the arc-weights are
shifted up as prescribed in Proposition~\ref{prop:shift}. After repeating
this operation to all nodes in a layer, the conditions of
Theorem~\ref{thm:reduce} and its VPO are applied to the nodes
of the same layer in the transformed network. Node-merge operations
can be performed in any order, in the sense that any sequence will lead to the
same reduced network.

\begin{proposition}
\label{prop:VPOcomplexity}
	The component-wise minimum arc-weight shift and node merge VPO 
	have worst-case time complexity of $\bigO(\nobj|\nodes|^2\log(|\nodes|))$.
\end{proposition}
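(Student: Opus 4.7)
The plan is to bound the two interleaved phases of the VPO separately—the weight-shifting pass and the node-merging pass—and then combine the dominant contributions. Processing proceeds layer by layer from $j = n$ down to $j = 2$, so I would analyze the per-layer cost of each phase and aggregate across layers at the end.

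For the weight-shifting phase at a node $u \in \layer{j}$, computing $\tilde{c}(u)$ as the component-wise minimum over $\arcs^+(u)$ takes $\bigO(\nobj\,|\arcs^+(u)|)$ time, and then subtracting $\tilde{c}(u)$ from each weight of $\arcs^+(u)$ and adding it to each weight of $\arcs^-(u)$ (per Proposition~\ref{prop:shift}) contributes $\bigO(\nobj(|\arcs^+(u)|+|\arcs^-(u)|))$. Summed over every node in every layer this phase costs $\bigO(\nobj\,|\arcs|)$. Because the number of arcs between consecutive layers is bounded by $|\layer{j}||\layer{j+1}|$ and $\sum_j |\layer{j}||\layer{j+1}| \leq |\nodes|^2$, the total weight-shifting cost is $\bigO(\nobj\,|\nodes|^2)$, which will be dominated by the merge bound.

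For the node-merge phase in layer $j$, I would give every $u \in \layer{j}$ a canonical \emph{signature} obtained by lexicographically sorting its outgoing arcs keyed on $(\arcterminal{a},\arcweights{a})$. Two nodes satisfy the hypothesis of Theorem~\ref{thm:reduce} exactly when their signatures coincide, so the equivalence classes of mergeable nodes are obtained by sorting $\layer{j}$ itself by signature and performing one linear scan to collapse consecutive equal entries; the rewiring of incoming arcs and deletion of duplicate outgoing arcs can be charged to the arcs touched and contributes only $\bigO(\nobj\,|\arcs|)$ overall. Constructing a single node's signature costs $\bigO(\nobj\,|\arcs^+(u)|\log|\arcs^+(u)|)$ since each comparison of $\nobj$-dimensional weight vectors runs in $\bigO(\nobj)$; summed over all nodes this contributes $\bigO(\nobj\,|\arcs|\log|\nodes|) = \bigO(\nobj\,|\nodes|^2\log|\nodes|)$.

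The main obstacle is aggregating the outer sort—the sort that orders the nodes of $\layer{j}$ by their signatures—across all layers. A single signature comparison costs $\bigO(\nobj\,|\layer{j+1}|)$ in the worst case because a signature in $\layer{j}$ has length at most $|\layer{j+1}|$, and a comparison sort of $|\layer{j}|$ nodes uses $\bigO(|\layer{j}|\log|\layer{j}|)$ comparisons, yielding $\bigO(\nobj\,|\layer{j}||\layer{j+1}|\log|\nodes|)$ for layer $j$. Applying $|\layer{j}||\layer{j+1}| \leq \tfrac{1}{2}(|\layer{j}|^2+|\layer{j+1}|^2)$ together with $\sum_j |\layer{j}|^2 \leq \bigl(\sum_j |\layer{j}|\bigr)^2 = |\nodes|^2$ shows that the per-layer merge-sort contributions telescope to $\bigO(\nobj\,|\nodes|^2\log|\nodes|)$. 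Combining this with the $\bigO(\nobj\,|\nodes|^2)$ weight-shifting bound gives the claimed $\bigO(\nobj\,|\nodes|^2\log(|\nodes|))$ overall complexity.
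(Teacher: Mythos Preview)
Your argument is correct and in fact tighter than what the paper's own proof establishes. The paper (Appendix~\ref{subsec:ProofPropTwo}) also separates the weight-shift and node-merge phases and, like you, encodes each node $u\in\layer{i}$ by a canonical $\nobj|\layer{i+1}|$-dimensional vector $\xi(u)$. The key difference is in how equivalence classes are detected: the paper compares the vectors $\xi(u)$ \emph{pairwise} within each layer, paying $\bigO(\nobj|\layer{i}|^2|\layer{i+1}|)$ per layer and concluding with an overall bound of $\bigO(\nobj|\nodes|^3)$---which is actually weaker than the $\bigO(\nobj|\nodes|^2\log|\nodes|)$ claimed in the proposition. Your sorting-based detection replaces the $|\layer{i}|^2$ factor by $|\layer{i}|\log|\layer{i}|$ and, together with the aggregation $\sum_j|\layer{j}||\layer{j+1}|\le|\nodes|^2$, recovers the stated bound exactly. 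So your route is not only different but closes a gap between the paper's proof and its stated complexity.

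One caveat worth making explicit (it is implicit in both arguments): the bound $\sum_j|\layer{j}||\layer{j+1}|\le|\nodes|^2$ on the number of arcs, and the claim that a signature in $\layer{j}$ has length at most $|\layer{j+1}|$, both presume at most one arc between any ordered pair of nodes. Since $\networkmodel$ is defined as a multi-digraph, you are tacitly assuming that parallel arcs have already been pruned (e.g., via Corollary~\ref{cor:arcremoval}); the paper's vector $\xi(u)$ makes the identical assumption. Stating this explicitly would make your argument self-contained.
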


\subsection{Arc removal}
\label{sec:arcRemoval}

In this section, we investigate algorithms and structural results of VPOs that eliminate arcs of the network model. 
For an arc $a \in \arcs$, let $\networkmodel - a$ be the network model resulting from the removal of $a$ from $\networkmodel$.  By definition, removing arc $a$ is a VPO if and only if $\paretofrontier{\networkmodel} = \paretofrontier{\networkmodel - a}$.  The following theorem shows that identifying when such condition holds in general is an NP-Hard problem.

\begin{theorem}
	\label{thm:NPHardRemoval}
	Given a valid network model $\networkmodel = (\nodes, \arcs)$ for a MODO $\modo$, let
	$\tilde{\arcs} \subseteq \arcs$ be a subset of arcs of $\networkmodel$. Deciding whether there exists an arc $a \in \tilde{\arcs}$
	such that $\paretofrontier{\networkmodel} = \paretofrontier{\networkmodel - a}$
	is NP-hard. 
\end{theorem}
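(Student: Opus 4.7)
The plan is to reduce from SUBSET SUM, which is NP-complete. Given an instance with positive integers $c_1, \dots, c_n$ and target $T$, I construct in polynomial time a MODO $\modo$, a valid network model $\networkmodel = (\nodes, \arcs)$ for it, and a singleton $\tilde{\arcs} = \{a^*\}$ such that $\paretofrontier{\networkmodel} = \paretofrontier{\networkmodel - a^*}$ if and only if there exists $S \subseteq [n]$ with $\sum_{i \in S} c_i = T$. Since $\tilde{\arcs}$ is a singleton, existence of an arc $a \in \tilde{\arcs}$ satisfying the condition coincides with the SUBSET SUM decision, yielding NP-hardness.

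The construction uses $\nobj = 2$ objectives and $n+2$ layers. Layers $\layer{1}$ and $\layer{n+2}$ are the singletons $\{\rootnode\}$ and $\{\terminalnode\}$; each intermediate layer contains a ``main'' node and a ``bypass'' node. The designated arc $a^*$ goes from $\rootnode$ to the first bypass node with weight $(T, -T)$, and the remaining bypass arcs (down to $\terminalnode$) all carry weight $(0, 0)$, so the unique path through $a^*$ has weight $(T, -T)$. The root also has a $(0, 0)$ arc to the first main node, after which consecutive main nodes are joined by two parallel arcs: an include-arc with weight $(c_j, -c_j)$ and an exclude-arc with weight $(0, 0)$. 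Every main-chain path thus has weight $(t, -t)$ with $t = \sum_{i \in S} c_i$ for the subset $S$ indicated by the include-arcs chosen. Taking $\modo$ to be the multicriteria shortest path problem on $\networkmodel$ itself, encoded with $\xset \subset \mathbb{B}^{|\arcs|}$ as the set of arc-indicator vectors of root-to-terminal paths and $f(x) = \sum_{a \in \arcs} \arcweights{a} x_a$, the network is valid by construction, i.e.\ $\paretofrontier{\networkmodel} = \nds$.

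The decisive observation is that $(t, -t)$ and $(t', -t')$ are Pareto-incomparable whenever $t \neq t'$, so the main-chain weights are pairwise incomparable and the singleton weight $(T, -T)$ contributed by $a^*$ is incomparable with every $(t, -t)$ for $t \neq T$. Hence $\paretofrontier{\networkmodel - a^*}$ equals the full set of achievable subset-sum images $\{(t, -t) : t = \sum_{i \in S} c_i,\ S \subseteq [n]\}$, whereas $\paretofrontier{\networkmodel}$ additionally contains $(T, -T)$ unless that point is already attained by a main-chain path. Therefore $a^*$ is removable if and only if SUBSET SUM is a ``Yes'' instance, completing the polynomial reduction. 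The main technical care is in designing the two objectives so that (i) distinct main-chain paths yield pairwise Pareto-incomparable images and (ii) the image of $a^*$ is incomparable with every non-$a^*$ image; both properties are handled uniformly by the symmetric $(c, -c)$ weighting, after which only routine bookkeeping remains.
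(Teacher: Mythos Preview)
Your reduction is correct and shares the architectural blueprint of the paper's own argument: both build a biobjective network consisting of a ``main'' subnetwork that encodes the images of an auxiliary hard problem together with a disjoint side path whose removability is equivalent to the decision, and both take $\tilde{\arcs}$ on that side path. The paper, however, reduces from the \textsc{Unconstrained Biobjective} dominance problem (given $c^1,c^2\in\mathbb{Z}^n$ and $d\in\mathbb{Z}^2$, is there $x\in\mathbb{B}^n$ with $(c^1)^\top x\ge d_1$ and $(c^2)^\top x\ge d_2$?) rather than from \textsc{Subset Sum}: its main subnetwork has one node per layer with two parallel arcs of weights $\mathbf{0}$ and $(c^1_j,c^2_j)$, its side path carries the target $d$, and the side arcs are removable precisely when $d$ is weakly dominated by some feasible image. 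Your anti-diagonal weighting $(c_j,-c_j)$ is a genuinely different device: it forces all distinct subset-sum images to be pairwise Pareto-incomparable, so removability becomes a question of \emph{equality} (is $(T,-T)$ already present?) rather than of \emph{domination}. This buys you a reduction from a more canonical NP-complete source without having to invoke the NP-hardness of the biobjective dominance question, at the cost of a slightly more special-purpose weight design. One minor bookkeeping point worth tidying: the paper ties the layer count of a network model to the dimension of $\xset$, so taking $\xset\subset\mathbb{B}^{|\arcs|}$ against an $(n{+}2)$-layer network does not literally conform; re-encoding the MODO with one variable per layer (recording the outgoing arc chosen at each step) fixes this without altering the argument.
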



Despite the hardness of determining whether an arc can be safely removed without changing the Pareto frontier, we can still exploit strong sufficient conditions for designing arc-removal VPOs. Additional notation is in order.  Given a network model $\networkmodel = (\nodes, \arcs)$ and 
two nodes $u,v \in \nodes$ such that 
$\nodelayer{u} < \nodelayer{v}$, let $\networkmodel[u,v]$ be the network model containing only nodes and arcs in $\networkmodel$ that lie on some path which starts at node $u$ and ends at node $v$.  
We introduce the following concept:
\begin{definition}
	\label{def:isolating}
A pair of nodes~$u,v \in \nodes$ is \emph{isolating} in~$\networkmodel$ when, for every arc $a \in \arcs_{\networkmodel} \backslash \arcs_{\networkmodel[u,v]}$,
\begin{itemize}
	\item[(i)] $\arcterminal{a} \in \nodes_{\networkmodel[u,v]}$ implies that $\arcterminal{a} = u$; and
	\item[(ii)] $\arcroot{a} \in \nodes_{\networkmodel[u,v]}$ implies that $\arcroot{a} = v$.
\end{itemize}
\end{definition}

According to Definition \ref{def:isolating}, nodes $u$ and $v$ are isolating in~$\networkmodel$ if $\networkmodel[u,v]$ contains all arcs from~$\networkmodel$  that are directed to nodes in  $\nodes_\networkmodel[u,v] \setminus \{u\}$ and all arcs that are directed out of nodes in $\nodes_\networkmodel[u,v]  \setminus \{v\} $. 
(For example, any pair of nodes are isolating in Figure \ref{fig:UnconstraintMODO}\subref{fig:unconstNetwork}.)  Note that one can check whether two nodes $u$ and $v$ are isolating in~$\networkmodel$ in polynomial time, in the size~$\networkmodel$, with a breadth-first search. 

Pairs of isolating vertices yield a sufficient condition for an arc-removal operation to be a VPO.

\begin{theorem}
	\label{thm:subBDDarcRemoval}
	Let $u$ and $v$ be isolating nodes in a network model $\networkmodel$.  For any $a \in \arcs_{\networkmodel[u,v]}$, if  $\paretofrontier{\networkmodel[u,v]} = \paretofrontier{\networkmodel[u,v] - a}$, then  $\paretofrontier{\networkmodel} = \paretofrontier{\networkmodel - a}$. That is, if the removal of arc $a$ is a VPO in $\networkmodel[u,v]$, then it is also a VPO in $\networkmodel$.
\end{theorem}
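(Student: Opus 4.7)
The plan is to convert the global statement $\paretofrontier{\networkmodel} = \paretofrontier{\networkmodel - a}$ into a purely local statement about $\pathset{u}{v}_{\networkmodel[u,v]}$ by exploiting the isolating property, which forces every root-to-terminal path in $\networkmodel$ that touches $\networkmodel[u,v]$ to pass through $u$ and $v$ in a structured way. The theorem then follows from a Minkowski-sum argument on sets of weight vectors.

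First, I would establish a \emph{unique path decomposition}. Let $p \in \Pathset_\networkmodel$. I claim that if $p$ uses any arc of $\arcs_{\networkmodel[u,v]}$, then $p$ can be written uniquely as a concatenation $p = p_1 \cdot p_2 \cdot p_3$, where $p_1$ is a $\rootnode$-to-$u$ path using no arc of $\arcs_{\networkmodel[u,v]}$, $p_2 \in \pathset{u}{v}_{\networkmodel[u,v]}$, and $p_3$ is a $v$-to-$\terminalnode$ path using no arc of $\arcs_{\networkmodel[u,v]}$. Tracing $p$ forward from $\rootnode$, the first arc of $p$ whose terminal lies in $\nodes_{\networkmodel[u,v]}$ must, by condition~(i) of Definition~\ref{def:isolating}, have terminal equal to $u$ (or $p$ starts at $\rootnode=u$). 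Once inside $\nodes_{\networkmodel[u,v]}$ at any node $w \neq v$, the next arc has root $w \in \nodes_{\networkmodel[u,v]}$; by condition~(ii) it cannot lie outside $\arcs_{\networkmodel[u,v]}$, so $p$ must stay inside $\networkmodel[u,v]$ until it reaches $v$. Layer-monotonicity prevents re-entry. Paths that use no arc of $\arcs_{\networkmodel[u,v]}$ form a set $Q$ that is entirely unaffected by removing $a$.

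Second, letting $P_1$, $P_2 := \pathset{u}{v}_{\networkmodel[u,v]}$, and $P_3$ denote the three segment path sets, this decomposition yields the set identity
\[
\bigcup_{p \in \Pathset_\networkmodel} w(p) \;=\; \bigcup_{q \in Q} w(q) \,\cup\, \bigl(W(P_1) + W(P_2) + W(P_3)\bigr),
\]
where $W(\cdot)$ denotes the set of path-weight vectors and $+$ is Minkowski addition. Since $a \in \arcs_{\networkmodel[u,v]}$, removing $a$ leaves $Q$, $P_1$, and $P_3$ intact and only replaces $P_2$ by some $P_2' \subseteq P_2$; the same identity holds for $\networkmodel - a$ with $W(P_2')$ in place of $W(P_2)$. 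Moreover, the hypothesis $\paretofrontier{\networkmodel[u,v]} = \paretofrontier{\networkmodel[u,v] - a}$ means exactly that $\nd{W(P_2)} = \nd{W(P_2')}$, so every $y \in W(P_2)$ is weakly dominated by some $y^\ast \in W(P_2')$.

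Third, I would prove an auxiliary lemma: if $X' \subseteq X \subseteq \mathbb{R}^\nobj$ with $\nd{X} = \nd{X'}$, then for any $A, B_1, B_3 \subseteq \mathbb{R}^\nobj$,
\[
\nd{A \cup (B_1 + X + B_3)} \;=\; \nd{A \cup (B_1 + X' + B_3)}.
\]
The $\supseteq$ inclusion is immediate because the right-hand multiset is contained in the left. For $\subseteq$, any point $b_1 + x + b_3$ with $x \notin X'$ is strictly dominated by $b_1 + x^\ast + b_3$ for some $x^\ast \in X' \subseteq X$, because component-wise $(\succ)$-dominance is preserved under common additive shifts; this both rules out such a point being nondominated and shows that any would-be dominator from $B_1 + X + B_3$ can be replaced by an equally good (weakly-dominating) element of $B_1 + X' + B_3$. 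Applying this lemma with $X = W(P_2)$, $X' = W(P_2')$, $A = W(Q)$, $B_1 = W(P_1)$, $B_3 = W(P_3)$ yields $\paretofrontier{\networkmodel} = \paretofrontier{\networkmodel - a}$.

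The main obstacle is the first step: verifying that the two clauses of Definition~\ref{def:isolating} together prevent any path in $\networkmodel$ from touching $\networkmodel[u,v]$ except through the ``front door'' $u$ and the ``back door'' $v$. Once this structural factorization is in place, the remaining steps are routine set-theoretic manipulations that use only the monotonicity of $\succ$ under common additive shifts.
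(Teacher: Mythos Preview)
Your proof is correct and rests on the same structural observation as the paper's---that the isolating property forces every $\rootnode$-to-$\terminalnode$ path touching $\networkmodel[u,v]$ to decompose as a prefix to $u$, a $u$-to-$v$ segment inside $\networkmodel[u,v]$, and a suffix from $v$---but you package the argument differently. The paper argues by contradiction: it assumes the removal of $a$ is not a VPO in $\networkmodel$, takes a single path $p$ through $a$ with $w(p)\in\paretofrontier{\networkmodel}$, decomposes $p$ as $p^1\cdot p'\cdot p^2$ via the isolating property, replaces the middle segment $p'$ by a weakly dominating segment $p''$ in $\networkmodel[u,v]-a$, and obtains a contradiction. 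Your proof instead establishes the factorization globally for \emph{all} paths, expresses the full path-weight set as $W(Q)\cup(W(P_1)+W(P_2)+W(P_3))$, and invokes a stand-alone Minkowski-sum lemma.

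What each buys: the paper's contradiction argument is shorter and avoids introducing auxiliary notation, but it conflates the ``equal weight'' and ``strictly dominated'' subcases somewhat loosely (it writes ``$w(\tilde p)\neq w(p)$ by assumption'' without having carefully isolated the case where a path of equal weight survives in $\networkmodel-a$). Your set-theoretic route handles both subcases uniformly and yields a reusable lemma---$\nd{X}=\nd{X'}$ with $X'\subseteq X$ implies $\nd{A\cup(B_1+X+B_3)}=\nd{A\cup(B_1+X'+B_3)}$---which makes the monotonicity-under-translation step explicit and would apply verbatim to other isolating-style reductions. The trade-off is length: your argument is more verbose, and the decomposition step, which you rightly flag as the crux, requires exactly the same case analysis on conditions (i) and (ii) of Definition~\ref{def:isolating} that the paper sweeps into the single sentence ``since $u$ and $v$ are isolating, $p$ must traverse $u$ and $v$.''
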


The proof of Theorem \ref{thm:subBDDarcRemoval} is provided in Section \ref{subsec:ProofThmThree}.

\smallskip

Theorem~\ref{thm:subBDDarcRemoval} shows that
 pairs of isolating nodes in an arbitrary network model~$\networkmodel$ define  subnetworks whose VPOs involving the removal of arcs are also VPOs for~$\networkmodel$.
The simplest case reduces to two arcs with the same endpoints, which yields the following immediate corollary. 
\begin{corollary}
\label{cor:arcremoval}
	Let $a_1$ and $a_2$ be any two arcs of a network model~$\networkmodel$ for which $\arcroot{a_1} = \arcroot{a_2}$ and $\arcterminal{a_1} = \arcterminal{a_2}$.  If $\arcweights{a_1} \strictlydominated \arcweights{a_2}$ or $\arcweights{a_1} = \arcweights{a_2}$, then the removal of $a_1$ is a VPO.
\end{corollary}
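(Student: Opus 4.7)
The plan is to prove the corollary directly by a path-replacement argument rather than invoking Theorem~\ref{thm:subBDDarcRemoval}, since the endpoints of parallel arcs need not be isolating in the sense of Definition~\ref{def:isolating} (other arcs of $\networkmodel$ may enter $\arcterminal{a_1}$ or leave $\arcroot{a_1}$ from outside the two-arc subnetwork). Let $\networkmodel' := \networkmodel - a_1$, and let $\Pathset$ and $\Pathset'$ denote the sets of $\rootnode$-$\terminalnode$ paths in $\networkmodel$ and $\networkmodel'$, respectively, so $\Pathset' \subseteq \Pathset$. Partition $\Pathset = \Pathset' \cup \Pathset^{a_1}$, where $\Pathset^{a_1}$ is the set of paths that traverse $a_1$, and set $Y := \bigcup_{p \in \Pathset} w(p)$ and $Y' := \bigcup_{p \in \Pathset'} w(p) \subseteq Y$. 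The claim reduces to showing $\nd{Y} = \nd{Y'}$, since $\paretofrontier{\networkmodel} = \nd{Y}$ and $\paretofrontier{\networkmodel'} = \nd{Y'}$.

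The key observation is that for each $p \in \Pathset^{a_1}$, replacing $a_1$ by $a_2$ yields a path $p' \in \Pathset'$ (because $\arcroot{a_1} = \arcroot{a_2}$ and $\arcterminal{a_1} = \arcterminal{a_2}$), whose weight satisfies $w(p') = w(p) - w(a_1) + w(a_2) \weaklydominates w(p)$ by the hypothesis. To show $\nd{Y} \subseteq \nd{Y'}$, I would take $y \in \nd{Y}$ and first argue $y \in Y'$: if not, then $y = w(p)$ for some $p \in \Pathset^{a_1}$, and the replacement yields $w(p') \weaklydominates y$ with $w(p') \in Y$; combined with $y \in \nd{Y}$ this forces $w(p') = y$, hence $y \in Y'$. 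Since $Y' \subseteq Y$, no element of $Y'$ can strictly dominate $y$, so $y \in \nd{Y'}$. For the reverse inclusion, take $y \in \nd{Y'} \subseteq Y$ and suppose for contradiction some $y^* \in Y$ satisfies $y^* \strictlydominates y$; if $y^* \in Y'$ this directly contradicts $y \in \nd{Y'}$, while if $y^* \in Y \setminus Y'$ then the replacement construction produces $y^{**} \in Y'$ with $y^{**} \weaklydominates y^*$ and therefore $y^{**} \strictlydominates y$, again a contradiction.

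The only subtlety is handling the two hypothesis cases ($\arcweights{a_1} \strictlydominated \arcweights{a_2}$ and $\arcweights{a_1} = \arcweights{a_2}$) in a unified fashion; both deliver $w(p') \weaklydominates w(p)$ for the replaced path, which is precisely what the argument uses throughout. Once this uniform weak-dominance bound is in hand, what remains is careful bookkeeping with the definition of $\nd{\cdot}$, so no deeper structural machinery beyond the basic properties of the dominance relation is required.
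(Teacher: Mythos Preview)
Your proof is correct, and your observation that Theorem~\ref{thm:subBDDarcRemoval} does not literally apply is well taken: with $u := \arcroot{a_1}$ and $v := \arcterminal{a_1}$, the subnetwork $\networkmodel[u,v]$ has node set $\{u,v\}$, so any out-arc of $u$ to a node other than $v$ (or in-arc of $v$ from a node other than $u$) violates Definition~\ref{def:isolating}. The paper nonetheless presents the corollary as immediate from Theorem~\ref{thm:subBDDarcRemoval}; the reason this is morally sound is that the isolating hypothesis is used in the proof of that theorem (\S\ref{subsec:ProofThmThree}) only to guarantee that every $\rootnode$--$\terminalnode$ path through the arc under removal must traverse both $u$ and $v$, which here is automatic because $a_1$ itself joins $u$ to $v$. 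Your direct path-replacement argument is precisely the specialization of that proof to this adjacent-layer situation, and it has the virtue of closing the formal gap rather than relying on the reader to see why the hypothesis is not really needed.
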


Theorem~\ref{thm:subBDDarcRemoval} can also be applied directly by choosing two
nodes $u,v$ such that $N[u,v]$ is sufficiently small so that all associated
arcs can be removed efficiently. Specifically, given $\Delta := \nodelayer{v} -
\nodelayer{u}$, the number of paths in $\networkmodel[u,v]$ is bounded by
$\bigO(2^\Delta)$, and hence for small $\Delta$ the Pareto frontier (and arc to
be removed) can be identified quickly using, e.g., the procedures from Section
\ref{sec:findingTheParetoFrontier}, which we discuss next. For our numerical
evaluation, we fixed $\Delta = 2$. That is, (i) we find a pair $(u,v)$ of
isolating nodes that are distant by at most two layers, (ii) obtain the network
model $\networkmodel[u,v]$, (iii) compute its Pareto frontier, and finally (iv)
apply Theorem~\ref{thm:subBDDarcRemoval} to remove arcs that are VPOs in
$\networkmodel$.

\section{Generating the Pareto Frontiers from a Network Model}
\label{sec:findingTheParetoFrontier}

Given a valid network model, finding the Pareto frontier generally reduces to solving an
MSP (by multiplying arc-weights by $-1$) in a layered-acyclic multi-digraph, for which an
extensive literature exists; see, e.g., surveys by \citealt{Tarapata2007} and
\citealt{GarGioTav10}.

\andre{
In this section, we propose two methodologies for
enumerating the Pareto frontier based on our network model structure. The first
is a direct modification of the unidirectional recursion by \cite{Hen86}, also
applied, e.g., in \cite{figueira2013algorithmic} and \cite{rong2014dynamic}.
The second technique is an extension of \cite{GalandIPS13} and performs a
bidirectional search that combines the partial Pareto frontiers of each layer
using a \emph{coupling} operator. Both methodologies assign a set (or a collection
of sets) of $\nobj$-dimensional vectors to nodes of the network. Each
$\nobj$-dimensional vector is henceforth referred to as a \emph{label}, as is
done in the MSP literature.
}

\subsection{Unidirectional Pareto frontier generation}
\label{sec:unidirectional}

The unidirectional algorithms process one layer at a time, computing the
partial nondominated solutions at a node based on either the incoming arcs or
the outgoing arcs. It is a direct application of the recursion by \cite{Hen86}
but using the underlying structure of the network model, similar to the version
presented by \cite{rong2014dynamic}.

The procedure works as follows. When processed from the root node~$\rootnode$
to the terminal node~$\terminalnode$, the algorithm assigns a single set of
\emph{top-down labels} $\labelSetTD{u}$ to each node $u$. The label set of each
node is initialized as the empty set, except the root node, where
$\labelSetTD{\rootnode}$ is initialized as $\{\textbf{0}\}$. For each layer $j$
from one to $n$, having constructed $\labelSetTD{u}$ for all $u \in \layer{j}$,
the labels are constructed for the nodes in $\layer{j+1}$ by considering the
arcs directed from nodes in $\layer{j}$ to $\layer{j+1}$, one by one. For each
such arc $a$ (i.e., $a \in \bigcup_{u \in \layer{j}} \arcs^+(u)$) and every
label $z \in \labelSetTD{\arcroot{a}}$, the label $z + w(a)$ is added to
$\labelSetTD{\arcterminal{a}}$. After all arcs directed out of nodes in
$\layer{j}$ are processed, $\labelSetTD{u}$ is re-assigned to
$\nd{\labelSetTD{u}}$, to remove any labels that are dominated by other labels
in the set. Note that one can also do a simple check each time a label is added
to see if it is dominated by another label already existing for the node. At
the culmination of the algorithm, $\labelSetTD{\terminalnode}$ will be
$\paretofrontier{\networkmodel}$.

One can also run the algorithm in the opposite direction, starting from $\terminalnode$ and flipping the direction of the arcs.  The terminal node is initialized as $\{\textbf{0}\}$, and the nodes are processed in the opposite direction. We refer to the labels constructed in this direction as \emph{bottom-up labels} $\labelSetBU{u}$.  The set of labels $\labelSetBU{\rootnode}$ coincides with $\labelSetTD{\terminalnode}$ and is therefore equal to $\paretofrontier{\networkmodel}$.

\begin{example}
\label{ex:unidirectional}
Consider Figure~\ref{fig:bddexample}. The labels on the left of the nodes (shown in red) correspond to the top-down labels (i.e., for every node $u$, they list $\labelSetTD{u}$).  A ``$\checkmark$" is drawn next to labels that remains in $\labelSetTD{u}$ after the application of the $\nd{}$ operator, and a ``$\times$" is drawn otherwise. 
To the right of each node $u$, $\labelSetBU{u}$ is listed (in blue), with symbols ``$\checkmark$" and ``$\times$" indicating whether the labels remain or are discarded after the application of the $\nd{}$ operator, respectively.
\hfill $\square$
\end{example}

\subsection{Bidirectional Pareto frontier generation}
\label{sec:bidirectional}

We now provide a compilation method that extends the work of
\cite{GalandIPS13} for network models. Namely, one may obtain the elements composing the Pareto frontier by constructing labels in both directions simultaneously and \emph{coupling} the top-down and bottom-up labels.
 Given two sets of vectors $\mathcal{Z}_1, \mathcal{Z}_2 \subseteq \mathbb{R}^\nobj$, define the coupling of $\mathcal{Z}_1$ and $\mathcal{Z}_2$ as
 \[ 
 \couple{\mathcal{Z}_1}{\mathcal{Z}_2} := \nd{\set{z: z = z^1 + z^2, z^1 \in \mathcal{Z}_1, z^2 \in \mathcal{Z}_2}}.
 \]
The coupling function~$ \couple{\mathcal{Z}_1}{\mathcal{Z}_2}$ returns the nondominated set of vectors that result from every pairwise sum of vectors from the two sets~$\mathcal{Z}_1$ and~$\mathcal{Z}_2$. 
 
Let us fix a layer $j'$ and suppose we created the labels $\labelSetTD{u}$ for every node $u \in \layer{j}$, $j \leq j'$, and the labels $\labelSetBU{u}$ for every node $u \in \layer{j}$, $j \geq j'$.  We define the operation of \emph{coupling on layer} $j'$ as
  \[
  \coupleLayer{\layer{j'}} := \nd{ \bigcup_{u \in \layer{j'}} \couple{\labelSetTD{u}}{\labelSetBU{u}} }. 
  \] 
This yields the nondominated set that results from the coupling of the top-down and the bottom-up labels on each node. Note that $\coupleLayer{\layer{j}} = \nds$ for any
$j \in [n+1]$.

%
Example~\ref{ex:coupling} shows that this approach can significantly reduce the number of operations required to find the Pareto frontier of a network model.
Since the nondominated frontier of any set~$\mathcal{S}$ of $\nobj$-dimensional vectors can be found in time $\mathcal{O}\parentheses{|\mathcal{S}|\cdot \parentheses{\log\parentheses{|\mathcal{S}|}}^{\nobj - 2}}$
\citep{borzsony2001skyline}, the coupling operation of sets~$Z_1$ and~$Z_2$ can be completed in time $\bigO\parentheses{|\mathcal{Z}_1|\cdot|\mathcal{Z}_2|\cdot \parentheses{\log\parentheses{|\mathcal{Z}_1|\cdot|\mathcal{Z}_2|}}^{\nobj - 2}}$. 

\begin{example}
	\label{ex:coupling}
	Consider the network model in Figure~\ref{fig:bddexample}.  Suppose we fix $\layer{5}$, composed of nodes $u_1^5$ and $u_2^5$, as the coupling layer. Only 14 top-down labels need to be created to find $\labelSetTD{u_1^5}$ and $\labelSetTD{u_2^5}$, and only 11 bottom-up labels need to be created in order to find $\labelSetBU{u_1^5}$ and  $\labelSetBU{u_2^5}$.   The coupling of these sets results in
	\begin{eqnarray*}
	\couple{\labelSetTD{u_1^5}}{\labelSetBU{u_1^5}} & = & \set{\parentheses{8,13,17},\parentheses{6,7,19},\parentheses{7,14,13},\parentheses{7,15,8},\parentheses{6,16,4}} \\
	\couple{\labelSetTD{u_2^5}}{\labelSetBU{u_2^5}} & = & \set{\parentheses{10,21,8}},
	\end{eqnarray*}
	and, finally, 
	$\coupleLayer{\layer{5}} = 	\set{
		\parentheses{8,13,17},
		\parentheses{6,7,19},
		\parentheses{7,14,13},
		\parentheses{10,21,8}
	},$
	as desired. Note that using either unidirectional approach requires the creation of a total of 36 labels, as opposed to the 25 required using coupling. \hfill $\square$ 
\end{example}

Determining the best layer to couple on (i.e., the one in which the number of
labels that need to be created is minimized) is a non-trivial. In particular, a
unidirectional Pareto frontier compilation may require the creation of fewer
labels than the bidirectional variant. We therefore propose the following
heuristic procedure. Starting from $\rootnode$ and $\terminalnode$, we first
create $\labelSetTD{u'}$ and $\labelSetBU{u''}$, respectively, for all $u' \in
\layer{2}$ and for all $u'' \in \layer{n}$. Then, having constructed top-down
labels for each node up to $\layer{j_1}$, $2 \leq j_1$, and bottom-up labels
for each node on or below $\layer{j_2}$, $j_2 \in [j_1+1,n]$, we pick
among~$j_1$ and $j_2 $ the layer with the fewer number of total labels in order
to proceed with the extension operations. Namely, if $\sum_{u \in
\layer{j_1}}|\labelSetTD{u}| \leq \sum_{u \in \layer{j_2}}|\labelSetBU{u}|$,
extension of the top-down labels to $\layer{j_1 + 1}$ is completed and $j_1$ is
set to $j_1 + 1$. Otherwise, extension of the bottom-up labels to $\layer{j_2 -
1}$ is completed, and $j_2$ is set to $j_2 - 1$. This procedure is repeated
until $j_1 = j_2$, upon which coupling on layer~$\layer{j_1}$ is used to
calculate the Pareto frontier of the network model.



\subsection{Label Removal Algorithms}
\label{sec:labelRemovalAlgos}

Given a network model, the complexity of finding the Pareto frontier is largely determined by the cardinality of $\labelSetTD{u}$ and $\labelSetBU{u}$. Thus, having a VPO for the reduction of  $|\labelSetTD{u}|$ and $|\labelSetBU{u}|$ is relevant for computational purposes. 

The following proposition introduces such an operation. Given two nodes $u,v$ in a same layer
$\layer{j}$, the intuition behind the proposition is to identify when the subnetwork associated with $\networkmodel[u,\terminalnode]$ is dominated by $\networkmodel[v,\terminalnode]$, in which case we can remove labels in $u$ that are dominated by $v$. 

\begin{proposition}[Label filtering]
	\label{prop:labelFiltering}
	Let $u$ and $v$ be two nodes in $\layer{j}$ for some $j \in [n]$.  Suppose
	\[
	\nd{
		\paretofrontier{\networkmodel[u,\terminalnode]}
		,
		\paretofrontier{\networkmodel[v,\terminalnode]}
	}
	=
	\paretofrontier{\networkmodel[v,\terminalnode]}.
	\]
	If there exists a pair of labels $\ell^u \in \labelSetTD{u}$ and $\ell^v \in \labelSetTD{v}$ for which $\ell^u \strictlydominated \ell^v$ (or $\ell^u = \ell^v$), then removing $\ell^u$ from $\labelSetTD{u}$ is a VPO. Similarly, if
	\[
	\nd{
		\paretofrontier{\networkmodel[\rootnode,u]}
		,
		\paretofrontier{\networkmodel[\rootnode,v]}
	}
	=
	\paretofrontier{\networkmodel[\rootnode,v]},
	\]
	and there exists $\ell^u \in \labelSetBU{u}$ and $\ell^v \in \labelSetBU{v}$ for which $\ell^u \strictlydominated \ell^v$ (or $\ell^u = \ell^v$), 
	then removing $\ell^u$ from $\labelSetBU{u}$ is a VPO.
\end{proposition}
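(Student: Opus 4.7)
The plan is to prove the top-down half; the bottom-up statement follows by reversing arc directions and repeating the argument. The key observation is that removing $\ell^u$ from $\labelSetTD{u}$ only suppresses paths, so it cannot introduce new points into the computed frontier. It therefore suffices to show that every point $y^\star \in \paretofrontier{\networkmodel}$ is still realized by some path whose prefix weight at its layer-$j$ node is not the removed label $\ell^u$.

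First I would unpack what a top-down label encodes. By construction of the unidirectional recursion in Section~\ref{sec:unidirectional}, every $\ell \in \labelSetTD{u}$ equals $w(p)$ for some path $p$ from $\rootnode$ to $u$, and every path weight $w(p')$ from $\rootnode$ to $u$ is weakly dominated by some element of $\labelSetTD{u}$. Consequently, any $\rootnode$--$\terminalnode$ path through $u$ contributing an image $y = \ell^u + z$ (where $z$ is the weight of the tail from $u$ to $\terminalnode$) satisfies $z \weaklydominated z_1$ for some $z_1 \in \paretofrontier{\networkmodel[u,\terminalnode]}$, by the definition of $\paretofrontier{\networkmodel[u,\terminalnode]}$.

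Next I would chain dominances using the hypothesis. Because $\nd{\paretofrontier{\networkmodel[u,\terminalnode]} \cup \paretofrontier{\networkmodel[v,\terminalnode]}} = \paretofrontier{\networkmodel[v,\terminalnode]}$, the vector $z_1$ is weakly dominated by some $z_2 \in \paretofrontier{\networkmodel[v,\terminalnode]}$, hence $z \weaklydominated z_2$. Since $z_2$ is the weight of a genuine path from $v$ to $\terminalnode$, and $\ell^v \in \labelSetTD{v}$ is the weight of some path from $\rootnode$ to $v$, the sum $\ell^v + z_2$ is the weight of a valid $\rootnode$--$\terminalnode$ path in $\networkmodel$ that does not traverse the prefix realizing $\ell^u$. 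Combining $\ell^u \weaklydominated \ell^v$ with $z \weaklydominated z_2$ gives $\ell^u + z \weaklydominated \ell^v + z_2$.

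Now pick any $y^\star \in \nds$ and suppose, for contradiction, that every representation of $y^\star$ in $\networkmodel$ is of the form $\ell^u + z$ for some tail $z$; otherwise $y^\star$ is unaffected by the removal. By the chain above, $y^\star \weaklydominated \ell^v + z_2$, and since $y^\star$ is nondominated we must have equality, so $y^\star$ is also realized by the path through $v$ whose prefix weight is $\ell^v$ and whose suffix weight is $z_2$. That alternative representation survives the removal of $\ell^u$ from $\labelSetTD{u}$, hence $y^\star$ still appears in the computed frontier, and $\nds$ is preserved. The main obstacle I anticipate is being rigorous about the case where $y^\star$ admits only prefixes whose weight equals $\ell^u$: the resolution is precisely that Pareto optimality of $y^\star$ forces the weak dominance to collapse into equality, so the ``replacement'' path through $v$ realizes the same image.
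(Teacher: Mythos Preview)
Your argument is correct and follows the same route as the paper's proof: for any path through $u$ with prefix weight $\ell^u$ and suffix weight $z$, the hypothesis furnishes a path through $v$ with suffix weight $z_2 \succeq z$, so $\ell^v + z_2$ weakly dominates $\ell^u + z$ and the nondominated set is preserved. Your treatment is in fact more careful than the paper's, which simply asserts that $w(p)+\ell^u$ ``is dominated'' and hence not in $\paretofrontier{\networkmodel}$ without explicitly handling the equality case you resolve via the collapse-to-equality argument.
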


\begin{proof}
We provide a proof of the first case, 
as the proof of the other case follows by inverting the network model. By the condition in the statement of the proof, for each path $p$ in $\networkmodel_{[u,\terminalnode]}$, it must be that $w(p) + \ell^u$ is dominated by $w(p') + \ell^v$, for some path $p'$ in $\networkmodel_{[v,\terminalnode]}$.  Thus $w(p) + \ell^u$ does not belong to~$\paretofrontier{\networkmodel}$ for any such~$p$. It follows that  the removal of $\ell^u$ is a VPO. 
\end{proof}

Proposition \ref{prop:labelFiltering} generalizes the concept of \textit{state-based dominance} in DP \citep{Bertsekas2017} to network models. In particular, we can incorporate domain-specific information into a network model to identify cases where the conditions of Proposition \ref{prop:labelFiltering} are satisfied. We provide an example instantiation in Section \ref{sec:copmutationalInsights} which results in significant speedups in enumerating the Pareto frontier.

\section{Numerical Study}
\label{sec:copmutationalInsights}

In this section, we provide a detailed numerical evaluation of the
effectiveness of the proposed algorithm on five different classes of problems.
For each class, we discuss how the initial network model is constructed,
explain the best algorithmic configuration, and compare with existing
state-of-the-art approaches for general MODOs. In particular, we consider the
methodologies proposed by \cite{kirlik2014new} and \cite{Ozlen2013}, hereafter
denoted by algorithms \kirlik{} and \aira{}, respectively. The source codes of
these algorithms were kindly provided by the respective authors.


\paragraph{Experimental Design and Evaluation.}
All experiments ran on an Intel(R) Xeon(R) CPU E5-2680 v2 at 2.80GHz. Each experiment was limited to one thread and subject to a time limit of 3,600 seconds and a memory limit of 16GB. 
The algorithms \kirlik{} and \aira{}  depend on the resolution of integer linear programs; we employed IBM ILOG CPLEX 12.7.1 with default settings for this task \citep{CPLEXRef}.  The paired Wilcoxon signed-rank test was used to estimate $p$-values comparing pairs of algorithms \citep{wilcoxon1945individual} (this is a nonparametric test that will be used to compare if population mean ranks of solution times differ between algorithms).


Data was generated following previous literature guidelines; details of the
random generation procedure for each problem class are presented in the appendix. Direct comparisons between the algorithms are presented in
cumulative distribution plots, which show the number of instances solved by
each algorithm ($y$-axis) within a given time limit ($x$-axis). We use the
integral of the curve associated with an algorithm to estimate its relative
performance so that the quality of an algorithm depends both on the running
time and on the number of instances solved. The order in which algorithms are
listed in the legends reflect this metric, with the best-performing algorithms
appearing on the top. Additionally, we present scatter plots to compare the
best-performing network model algorithm against the best previous
state-of-the-art algorithm. These plots are presented in logarithmic scale and
represent the amount of time the algorithms require to solve each instance of
the given benchmark. We also employ a color code to indicate the size of the
Pareto frontier of each instance.

The network model-based algorithms employing the bottom-up, top-down, and bidirectional Pareto frontier compilation are represented by~\bu{}, \td{}, and~\coup{}, respectively. For applications where the domain-specific label filtering given in Proposition \ref{prop:labelFiltering} has been applied, we denote the extensions of \bu{}, \td{}, and \coup{} by \bup{}, \tdp{}, and \coupp{}, respectively.

\subsection{Multiobjective 0-1 Knapsack Problem} \label{sec:knapsack}

Given $n$ items, a capacity $W > 0$, and for each item $i \in [n]$, a weight $w_i > 0$ and 
$\nobj$ profits $p^1_i, p^2_i, \dots, p^\nobj_i > 0$, the multiobjective 0-1 knapsack problem (MKP) is
\[
\max \left \{ ((p^1)^\top x, (p^2)^\top x, \dots, (p^\nobj)^\top x) \,:\, 
 w^\top x \le W, \; x \in \mathbb{B}^n \right \}.\]

\paragraph{Network model construction.} The initial network is constructed via
a recursive formulation using a single dimensional state variable $s \in \mathbb{R}_+$, which
corresponds to the total weight of the knapsack at a certain stage. The root
state is $s_0 = 0$. We cannot set a variable $x_j = 1$ if it weighs more than
the available capacity, i.e., $\feasiblevaluestate_j(s) := \{ v \in \mathbb{B}
\,:\, s + v \cdot w_j \leq W \}$. The transition functions update the total
weight of the knapsack: $\transition_j(s, v) = s + v \cdot w_j$ for all $j \in
[n-1]$ and $\transition_n(s, v) = W$. Lastly, for any $j \in [n]$, the reward
function is the profit vector of item $j$, i.e., $\reward_j(s, v) := v \times
\left( p^1_j, \, p^2_j, \, \dots, p^\nobj_j \right)$.

We incorporate the label filtering of Proposition~\ref{prop:labelFiltering} by exploiting
the classical DP state dominance for knapsack problems. For any $j \in [n]$, let $u^j_i, u^j_{i'} \in \layer{j}$ be two possible states obtained at layer $j$ using the aforementioned recursive model. If $u^j_i \geq u^j_{i'}$, we have  
	\[
	\nd{
		\paretofrontier{\networkmodel[u^j_{i},\terminalnode]}
		,
		\paretofrontier{\networkmodel[u^j_{i'},\terminalnode]}
	}
	=
	\paretofrontier{\networkmodel[u^j_{i'},\terminalnode]}.
	\]
The equality above follows since each path (i.e., partial feasible solution) in $\networkmodel[u^j,\terminalnode]$ has a path in $\networkmodel[u^j_{i'},\terminalnode]$ of same path-weight, given that we have more capacity available at $u^j_{i'}$ than $u^j_i$. That is, we can assign the same variable values in a path starting at $u^j_{i'}$ and incur the same objective function contribution. Thus, we can remove labels at $u^j_i$ if they are dominated by a label at $u^j_{i'}$.

\paragraph{Computational evaluation.}

We experimented on 450 instances with $K \in \{3,4,\dots,7\}$ objectives and $n \in
\{20,30,\dots,100\}$ variables. The detailed results are presented in
Table~\ref{tab:KnapsackResults} of the appendix. We provide an
analysis of the results and summarize our findings.

\begin{figure}[h]
	\centering
	\begin{minipage}[b]{0.5\textwidth}
		\includegraphics[width=0.94\textwidth]{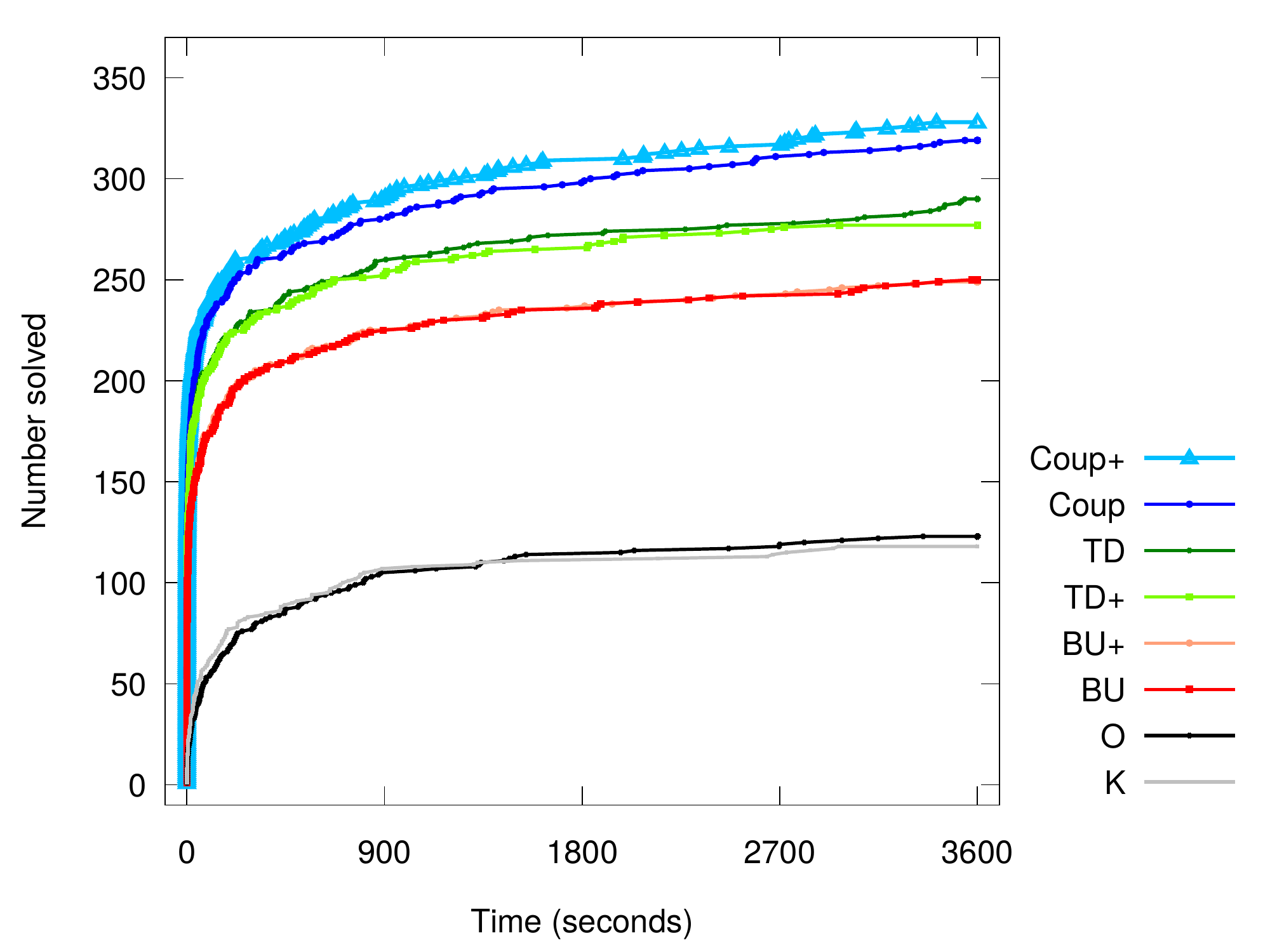}
		\caption{Knapsack cumulative distribution plot}
		\label{fig:knapCD}
	\end{minipage}
	\hfill
	\begin{minipage}[b]{0.45\textwidth}
		\centering
		\includegraphics[width=0.98\textwidth]{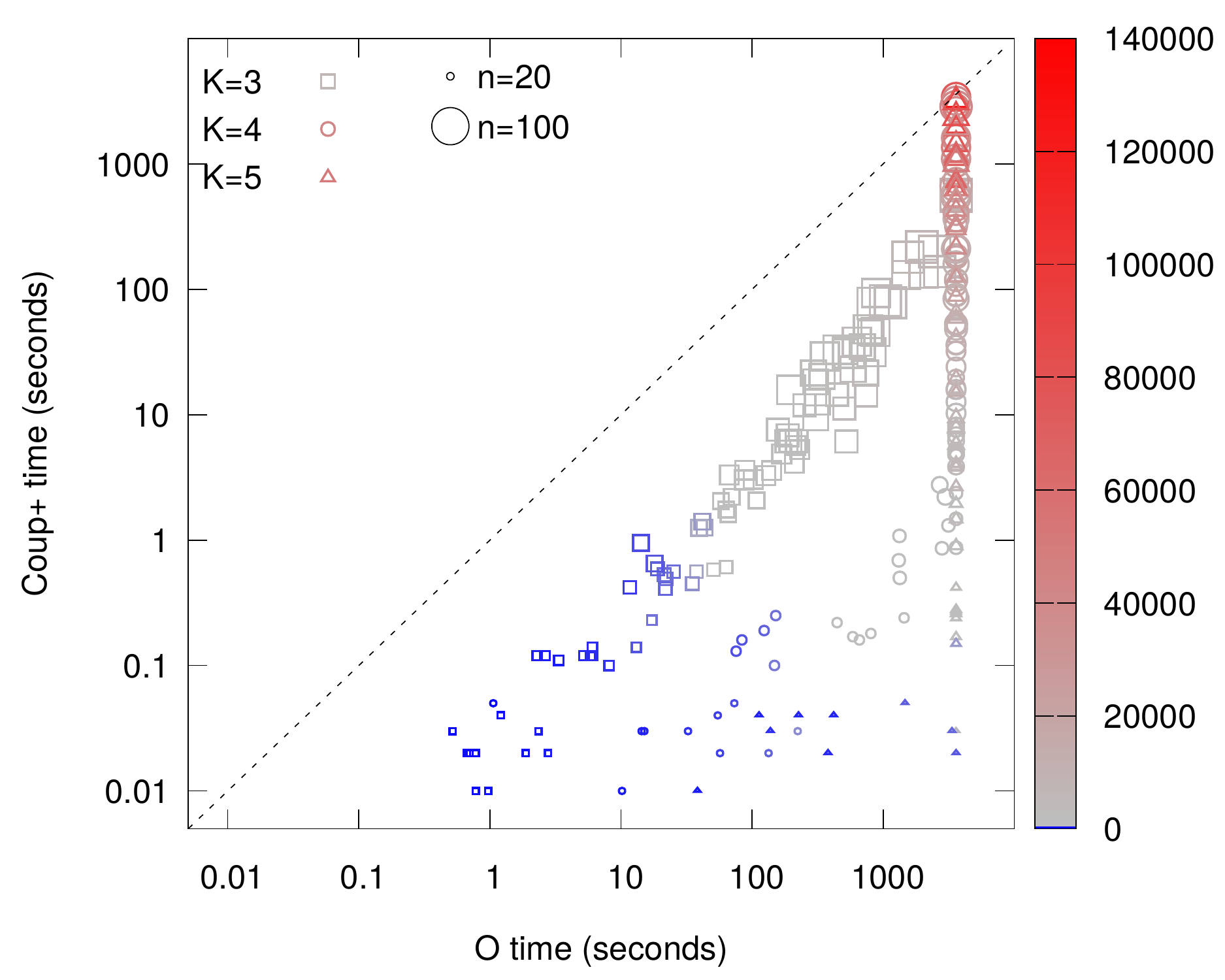}
		\caption{Knapsack scatter plot}
		\label{fig:knapSP}
	\end{minipage}
\end{figure}

The cumulative distribution plot for the knapsack instances is presented in Figure~\ref{fig:knapCD}. The results show a clear dominance of all network model algorithms over \aira{} and \kirlik{}. Overall, \coupp{} 
delivered the best results, solving 370 instances, whereas the configurations~\bu{} and~\bup{} were relatively weaker and solved 280 and 279 and instances, respectively. \aira{} and \kirlik{} solved 154 and 149, respectively. The figure also shows that the network model algorithms are considerably faster, as they solve more instances within seconds than~\aira{} and~\kirlik{} in one hour.


Figure~\ref{fig:knapSP} shows a scatter plot comparing \coupp{} with~\aira{} for instances with up to five objective functions. We removed from the plot instances with $\nobj = 6$ and $\nobj = 7$, since~\aira{} and \kirlik{} have considerably worse performance and the results do not provide much insight. \coupp{} was at least as efficient as~\aira{} and \kirlik{} in every knapsack instance tested, and only three instances that could not be solved by this network model configuration were solved by the others (one by~\td{} 
and two by \coup{}). The sizes of the Pareto frontier do not seem to affect the relative performance between~\coupp{} and~\aira{}; namely, \coupp{} and the other network model algorithms perform better than the previous state of the art in all cases, perhaps even more so in instances with smaller solution sets.


With respect to Pareto frontier compilation, the bidirectional strategy had the best results while bottom-up was relatively poor, independently from the inclusion of the label filtering VPO. 
Filtering affected network model algorithms in different ways, depending on the Pareto frontier compilation strategy used.  
The solution time differences between~\bu{} and~\bup{} are statistically significant ($p$-value of~$10^{-9}$), although in practical terms they perform similarly; the number of solved instances is almost the same (280 vs 279) and the average running time goes from 252 to 244 with filtering, a gain of 3\%  on average.  
The inclusion of filtering decreased the quality of the top-down algorithm, with 310 instances solved (as opposed to 323) and almost 40 seconds of additional computational time, on average, to solve instances solved by both algorithms (from 184 to 221, with $p$-value of $10^{-13}$). Finally, \coupp{} is significantly better than~\coup{} ($p$-value of $10^{-40}$); more instances were solved (370 in comparison with 362) in less time (average running time reduced from 318 to 228) and less variability (standard deviation reduced from 710 to 525).  

\subsection{Multiobjective Set Covering and Set Partitioning Problems} \label{sec:setCoveringandPartitioning}

We consider the multiobjective variants of the classical set covering and set
partitioning problems. Namely, let $A \in \mathbb{B}^{m \times n}$ be a binary
constraint coefficient matrix, and let $c^1,\dots,c^\nobj$ be $\nobj$ cost
vectors in $\mathbb{R}^n$. The multiobjective set covering problem (MSCP) is
defined as $$\min \left \{ ((c^1)^\top x, (c^2)^\top x, \dots, (c^\nobj)^\top
x) \,:\, Ax \ge \boldsymbol{1}, x \in \mathbb{B}^n \right \}.$$
The multiobjective set packing problem (MSPP) replaces ``min'' by ``max'' and
the inequality sign from ``$\ge$'' to ``$\le$'' in the definition above.



\paragraph{Network model construction.} The original networks are produced by
employing the DD transformation discussed in
Section~\ref{sec:bddTransformation}. In particular, we used the set covering DD
by \cite{BerHoeHoo11} and the set packing DD by \cite{BerCirHoeYun14}. In our
experiments, label filtering for both applications did not impact performance,
so we omit the corresponding results.

\paragraph{Computational evaluation.}

We experimented on 150 random instances with $n \in \{100, 150, 200\}$. Detailed results for the MCSP and the MSPP are presented in
Tables~\ref{tab:SetCoveringResults} and~\ref{tab:SetPackingResults},
respectively.

For the MSCP, \coup{} delivered the best results among the network model algorithms, solving 90 instances  with an average running time of 81 seconds (and a standard deviation of 188). \td{}  solved more instances (91), but its average runtime was higher (122 with a standard deviation of 263). Moreover, for the 88 instances solved by both, \coup{} had an average running time of 64 (with a standard deviation of 127), against 104 (standard deviation of 229) for~\td{}.

\begin{figure}[h]
	\centering
	\begin{minipage}[b]{0.54\textwidth}
	\includegraphics[width=0.89\textwidth]{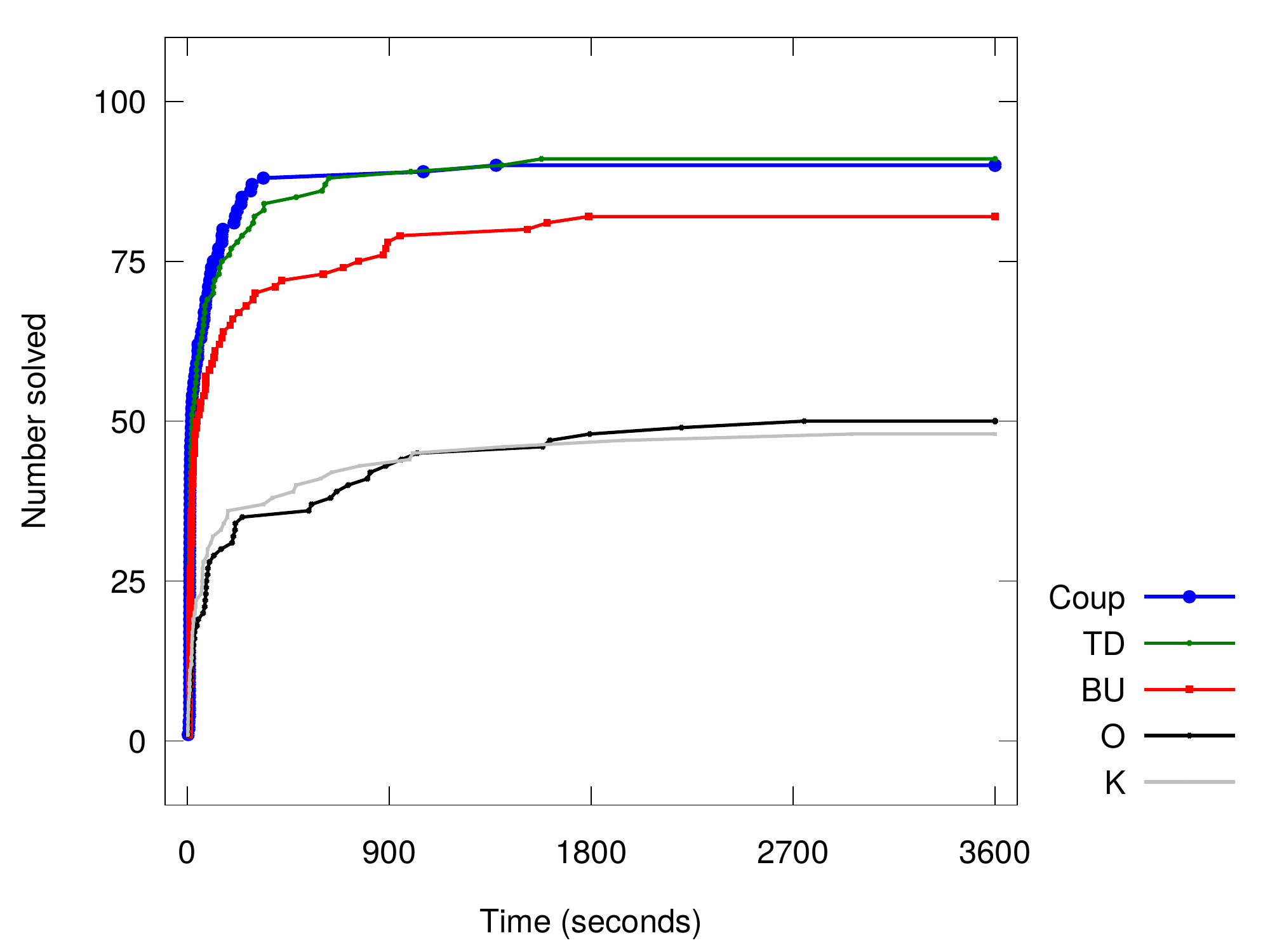}
	\caption{Set covering cumulative distribution plot}
	\label{fig:SCCD}
	\end{minipage}
	\hfill
	\begin{minipage}[b]{0.45\textwidth}
	\includegraphics[width=1\textwidth]{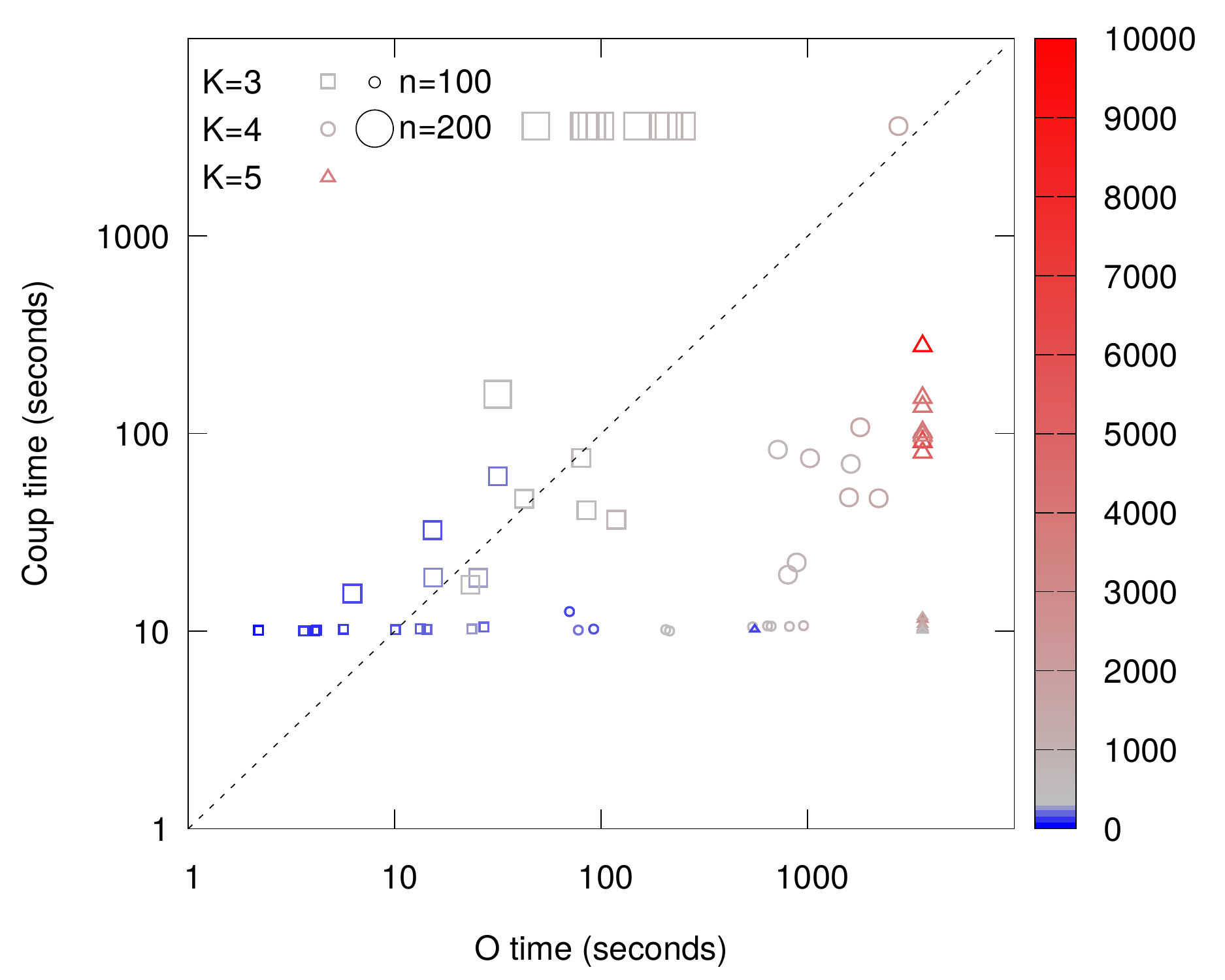}
\caption{Set covering scatter plot}
\label{fig:SCSP}
	\end{minipage}
\end{figure}

Algorithms \kirlik{} and \aira{} solved 48 and 50 instances, respectively. Among these, there were instances that the network model configurations could not enumerate the Pareto frontier (between 9 and 11, depending on the configuration employed). These instances were relatively large, typically of size $n = 200$, resulting in large network models that could not be solved within the given limits.

 Figure~\ref{fig:SCSP} depicts the relative performance of~\aira{} and~\coup{} on the MSCP, in particular elucidating the one configuration for which~\aira{} significantly outperformed~\coup{} (200 variables, $\nobj=3$). However, the same plot also suggests that~\coup{} is far more efficient for instances with relatively large Pareto frontiers. This suggests that the performance of the network models are less sensitive to the size of the Pareto frontier than~\kirlik{} and~\aira{}.
 

The results for the MSPP are presented in the cumulative distribution plot in Figure~\ref{fig:SPCD} and in the scatter plot in Figure~\ref{fig:SPSP}. 
\begin{figure}[h]
	\centering
	\begin{minipage}[b]{0.54\textwidth}
		\includegraphics[width=0.89\textwidth]{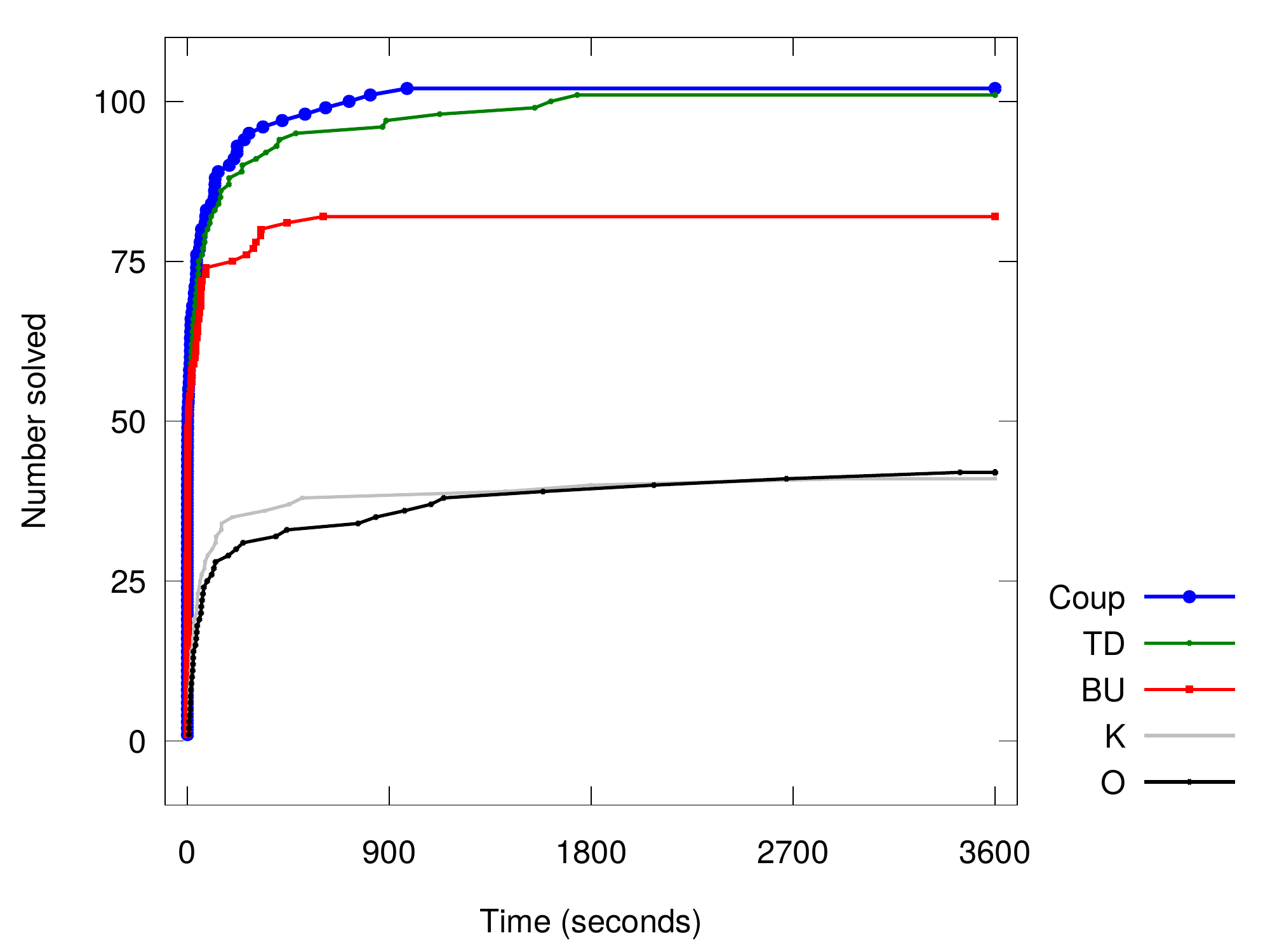}
		\caption{Set packing cumulative distribution plot}
		\label{fig:SPCD}
	\end{minipage}
	\hfill
	\begin{minipage}[b]{0.45\textwidth}
		\includegraphics[width=1\textwidth]{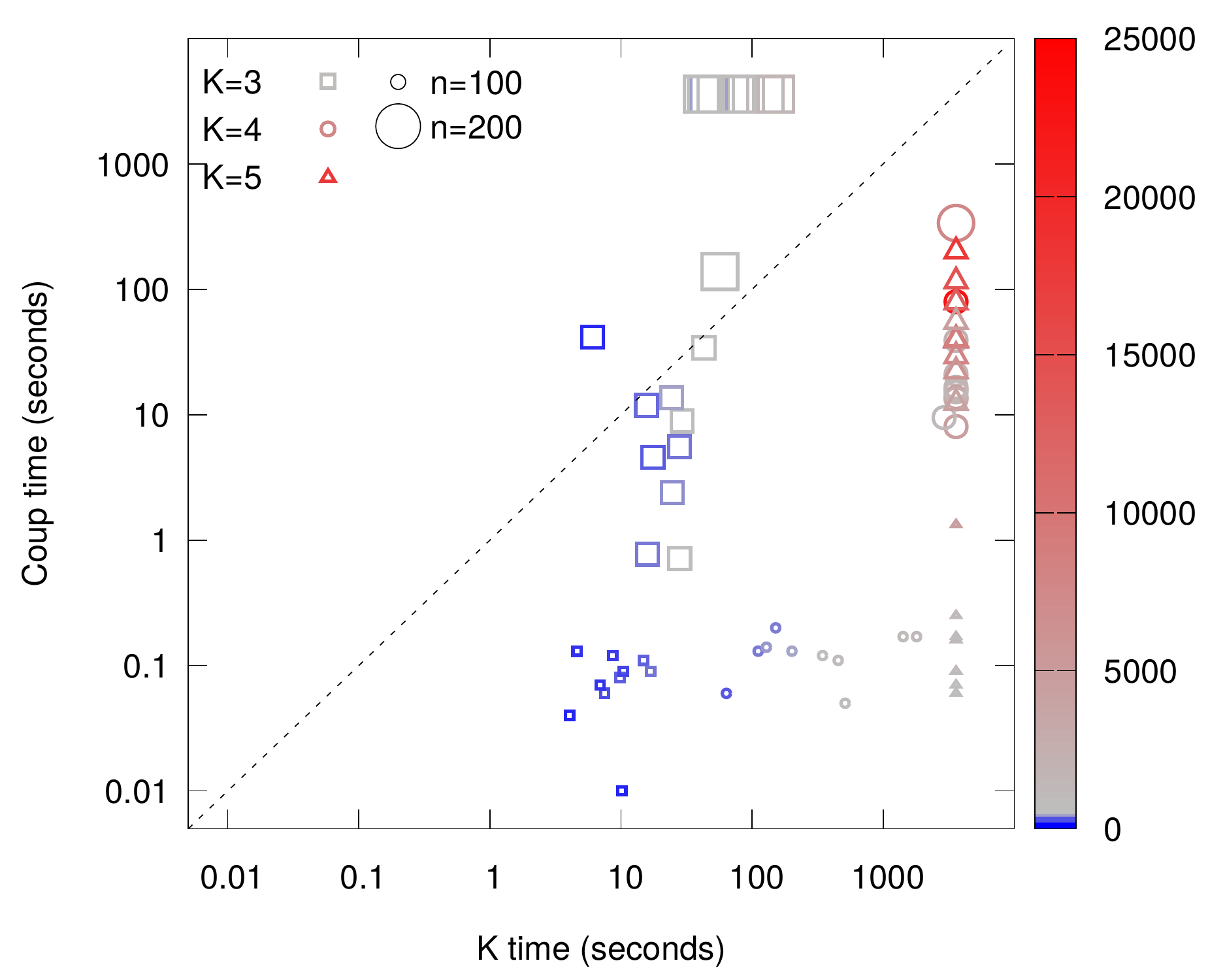}
		\caption{Set packing scatter plot}
		\label{fig:SPSP}
	\end{minipage}
\end{figure}
\coup{} also delivered the best results for this problem class, slightly outperforming \td{}; \coup{} solved one more instances than \td{} (102 vs 101) and had smaller running times (averages of 71 versus 123 and standard deviation of 170 versus 322). \coup{} and \td{} solved more instances of the MSPP than the MSCP, whereas \bu{}, \kirlik{}, and \aira{} had the inverse behavior. 
Algorithms \kirlik{} and~\aira{} again solved fewer instances (41 and 42) and, among these, between 9 and 10 instances (depending on the configuration) were not solved by the network models. The size of the instances played a role in the efficiency of the algorithms. Figure~\ref{fig:SPSP} shows that~\kirlik{} outperforms~\coup{} in some instances, but in relatively fewer cases than in the MSCP. Moreover, the Pareto frontier sizes have the same impact on the relative performance of the algorithms, as in the case of the MSCP, showing the robustness of network models.

\subsection{Multiobjective Traveling Salesperson Problem}

The multiobjective traveling salesperson problem (MTSP) is a generalization of the classical 
TSP where arcs are associated with multiple (often conflicting) distance measures. That is, given a graph $G=(V,E)$ 
with vertex set $V = \{1,\dots,n\}$ and where each 
edge $e \in E$ is associated with 
costs $c^1_{e}, \dots, c^{\nobj}_{e}$, MTSP 
asks for the nondominated Hamiltonian tours in $G$ with respect to edge costs. 

\paragraph{Network model construction.} The initial network is constructed
using the classical dynamic programming model for the TSP
\citep{Bertsekas2017}. Each state $s := (\bar{V}, v)$ is composed by a set $\bar{V}
\subseteq V$, representing the vertices that are still left to be visited, and
a vertex $v \in V \setminus \bar{V}$, representing the last visited vertex. The initial state is $s_0 := (V \setminus \{1\}, 1)$ (assuming
we start and end at vertex 1). The variable $x_j$ denotes the vertex that is
visited at the $j$-th position of the Hamiltonian tour; thus,
$\feasiblevaluestate_j((\bar{V}, v)) = x_j$. The transition function updates the set
of visited vertices, $\transition_j((\bar{V},v),x_j) = (\bar{V} \setminus \{x_j\}, x_j)$,
and the reward function is the negative of the distance travelled (since we are 
maximizing), $\reward_j((\bar{V},v), x_j) = (-c^1_{v,x_j}, \cdots, -c^{\nobj}_{v,x_j})$,
for $j = 1,\dots, n$. Finally, we establish a special terminal state with 
$\reward_{n+1}((\emptyset,v), x_j) = (-c^1_{v,1}, \cdots, -c^{\nobj}_{v,1})$
that represents the return to vertex 1.

\paragraph{Computational evaluation.} We experimented on 150 instances, with 10 instances for each configuration of $n \in \{5, 10, 15\}$ 
and $\nobj \in \{3, 4, 5, 6, 7\}$. We only depict 
results by \coup{}, which dominated all other network-based configurations, and \kirlik{}, which
also was superior to \aira{} in all scenarios tested. In particular, \kirlik{} uses the 
Miller-Tucker-Zemlin formulation of TSP \citep{miller1960integer} as in \cite{Ozlen2013}. Table \ref{tab:MTSPResults} depicts the results, 
where column $\mathsf{P}$ gives the average size of the Pareto frontiers (taking into account only closed instances), $\mathsf{S}$ gives the number of problems solved for the associated technique, and $\overline{t}$ provides the average time 
(out of 10 instances with the configuration); small running times were rounded up to 1 second.

{
\renewcommand{\arraystretch}{1.1}
\begin{table}[h]	
	\centering
	\small
	\caption{Multiobjective Traveling Salesperson Problem Results}
	\label{tab:MTSPResults}
	\begin{tabular}{|crr|rr|rr|}
		\multicolumn{3}{c}{} & \multicolumn{2}{c}{\coup{}} & \multicolumn{2}{c}{\kirlik{}} \\
		\hline \Tstrut \Bstrut
		$n$	 & $\nobj$ & $\mathsf{P}$ & $\mathsf{S}$ & $\overline{t}$ & $\mathsf{S}$ & $\overline{t}$ \\ 
		\hline \Tstrut \Bstrut
		5 & 3 & 6.9 & 10 & 1.0 & 10 & 1.0 \\
		& 4 & 8.7 & 10 & 1.0 & 10 & 1.0 \\
		& 5 & 8.1 & 10 & 1.0 & 10 & 1.2 \\
		& 6 & 11.0 & 10 & 1.0 & 10 & 2.8 \\
		& 7 & 10.9 & 10 & 1.0 & 5 & 11.4 \\
		\hline
		10 & 3 & 163.1 & 10 & 1.0 & 10 & 57.1 \\
		& 4 & 675.7 & 10 & 1.0 & 7 & 2021.4 \\
		& 5 & 2040.2 & 10 & 1.0 & 0 & - \\
		& 6 & 20080.5 & 10 & 1.0 & 0 & - \\
		& 7 & 9716.7 & 10 & 1.9 & 0 & - \\
		\hline
		15 & 3 & 670.7 & 10 & 3.2 & 7 & 2338.9 \\
		& 4 & 8328.5 & 10 & 41.2 & 0 & - \\
		& 5 & 55875.0 & 10 & 543.9 & 0 & - \\
		& 6 & 190447.3 & 8 & 2462.1 & 0 & - \\
		& 7 & - & 0 & - & 0 & - \\
		\hline 
	\end{tabular}
\end{table}
}

Our results show a complete dominance of~\coup{} over~\kirlik{}. Namely, \coup{} was superior to~\kirlik{} by at least one order of magnitude in all instances; for some configurations, \kirlik{} could not solve a single instance, whereas~\coup{} closed all scenarios within seconds (see e.g., $n = 10$ and $\nobj = 7$). Instances with 15 cities are very challenging for current state-of-the-art techniques; note that neither~\kirlik{} nor~\coup{} managed to solve any instance with $n = 15$ and~$\nobj = 7$; observe also that the size of the Pareto frontiers increase significantly with~$n$ and~$\nobj$. Nevertheless, \coup{} shows significant superiority in these scenarios as well; whereas \kirlik{} could not close any instances where $\nobj \geq 4$, \coup{} solved all instances with up to 5 objective functions in less than 10 minutes in average, and 8 out of 10 instances with $\nobj = 6$.

\subsection{Multiobjective Cardinality-Constrained Absolute Value Problem} 
\label{sec:nonlinear}

The multiobjective cardinality-constrained absolute value problem (MCCAVP)
is defined as
$$\min \left\{ \left( \left|(a^1)^\top x - b_1 \right|, \left|(a^2)^\top x - b_2 \right|, \hdots, \left|(a^K)^\top x - b_K \right| \right) \,:\, \boldsymbol{1}^\top x \leq C, x \in \mathbb{B}^n \right\},$$ 
where $a^1, \ldots, a^K \in \mathbb{Z}^n, b \in \mathbb{Z}^\nobj$, and $C \in \mathbb{Z}_+$.  
The MCCAVP is a multiobjective variant of the discrete $L_1$-norm minimization problems, classically applied in statistical data fitting and circuit optimization \citep{jong2012smoothing,Narula1982}. For instance, in data fitting problems each linear function represents a residual, and in the multiobjective case we wish to evaluate the Pareto frontier of nondominated fitting configurations according to each residual.

The MCCAVP illustrates how the procedure generalizes to multiobjective
nonlinear applications. If any of the $\nobj$ objective
functions is instead written as a linear function raised to the power of
$\alpha \in \mathbb{Z}_{\ge 1}$, the outer function can be replaced by the absolute value (if $\alpha$
is even) or simply ignored (if $\alpha$ is odd) without affecting the Pareto
frontier. The MCCAVP therefore provides a modeling framework for a wide-range
of objective functions.

\paragraph{Network model construction.} 

The initial network is constructed via a multiobjective recursive formulation as presented in Section~\ref{sec:recursiveModeling}. The recursive formulation is obtained by using a $(\nobj+1)$-dimensional
state variable $s:=(\theta, \gamma) \in \mathbb{R}^{\nobj} \times \mathbb{R}$, where $\theta_1, \dots, \theta_\nobj$ represent the partial evaluation of each $(a^k)^\top x$ for all $k \in [\nobj]$, and $\gamma$ is the number of variables that are set to one at that stage. The root state 
is $s_0 = (b,0)$. We cannot set a variable $x_j = 1$ if it exceeds the available capacity, i.e., $\feasiblevaluestate_j(s) := \{ v \in \mathbb{B} \,:\, \gamma + v \le C \}$. The transition functions update the partial evaluation of $(a^k)^\top x$ and the number of variables that are set to one; i.e.,
\[
\transition_j(s,v) = (\theta_1 + a^1_j \cdot v, \theta_2 + a^2_j \cdot v, \dots, \theta_\nobj + a^\nobj_j \cdot v, \gamma + v).
\]
for all $j \in [n]$. The reward function is the change in each objective function when transitioning from state $s=(\theta, \gamma)$ to another, that is, the $k$-th component of $\reward_j(s,v)$ is given by
\[
\bigg(  \reward_j(s,v) \bigg )_k := |\theta_k + a^k_j \cdot v - b_k| - |\theta_k - b_k|,
\]
for $k \in [k]$. To verify its validity, fix $k \in [\nobj]$ and consider any feasible $x =(v_1,\dots,v_n)$ and the associated state transitions $s_0 = (\theta^0, \gamma^0), s_1 = (\theta^1, \gamma^1), \dots, s_n =(\theta^n, \gamma^n)$. Observe that, by definition,
\begin{align*}
\sum_{j=1}^n \bigg( \reward_j(s_{j-1},v_j) \bigg)_k 
&= |\underbrace{\theta^0_k + a^k_1 \cdot v_1}_{\theta^1_k} - b_k| - |\theta^0_k - b_k| + |\underbrace{\theta^1_k + a^k_2 \cdot v_2}_{\theta^2_k} - b_k| - |\theta^1_k - b_k|   \\
& \quad + |\underbrace{\theta^2_k + a^k_3 \cdot v_3}_{\theta^3_k} - b_k| - |\theta^2_k - b_k| + \cdots + |\underbrace{\theta^{n-1}_k + a^k_n \cdot v_n}_{\theta^n_k} - b_k| - |\theta^{n-1}_k - b_k|  \\
&= |\theta^{n}_k - b_k| - \underbrace{|\theta^0_k - b_k|}_{0} 
=
\left| \sum_{j=1}^n a^k_j v_j - b_k \right |,
\end{align*}
which is the original objective for the $k$-th function.

\paragraph{Mixed-Integer Linear Programming Reformulation.} 
Since the original formulation for the MCCAVP is nonlinear, it cannot be directly input to~\kirlik{} and~\aira{}. We consider the following linear reformulation of the MCCAVP for~\kirlik{} and~\aira{}:
\begin{alignat*}{2}
\min_{x \in \mathbb{B}^n, \boldsymbol{1}^\top x \leq C} \ & \{ y_1,\hdots,y_\nobj\}  \\
\text{s.t.} \ & y_k \geq (a^k)^\top x - b_k, && k \in [\nobj], \\
				& y_k \geq -(a^k)^\top x + b_k, \qquad && k \in [\nobj].				
\end{alignat*}

\paragraph{Computational evaluation.} We experimented on 6,250 random instances. The detailed results for the MCCAVP are
presented in Table \ref{tab:AbsValueResults}. In particular, all
cases were solved by each network-based configuration. For this
section, we restrict the analysis of the results to 450 of these instances,
which have $M = 250$, $n \geq 15$, and $C \geq 30$, as the others were solved
within a few seconds.

The cumulative distribution plot is presented in Figure~\ref{fig:AVCD}. Whereas \kirlik{} and \aira{} solved less than 200 instances each, the network algorithms enumerated the Pareto frontier in less than 10 minutes in each case. Over the complete benchmark set, \bu{} delivered the best results. For the restricted set of 450  instances, \bu{} was also the best, although its results (average running time of 18 seconds with standard deviation of 41) were not different from those obtained by~\td{}  (average of 24 and standard deviation of 69) in a statistically significant way ($p$-value of $0.026$). 

Algorithm~\aira{} solved more instances than~\kirlik{} in the extended dataset (4,118 vs. 4,108), albeit with a higher runtime (on average 30\% larger). Alternatively, for the restricted family of instances,  \kirlik{} solved more instances (178 vs. 172) with a much shorter runtime (average of 258 and standard deviation of 492 vs. average of 432 and standard deviation of 801), thus suggesting that \kirlik{} outperforms \aira{} for harder instances. We therefore select~\kirlik{} for further comparison.

\begin{figure}[h]
	\centering
	\begin{minipage}[b]{0.58\textwidth}
		\includegraphics[width=0.78\textwidth]{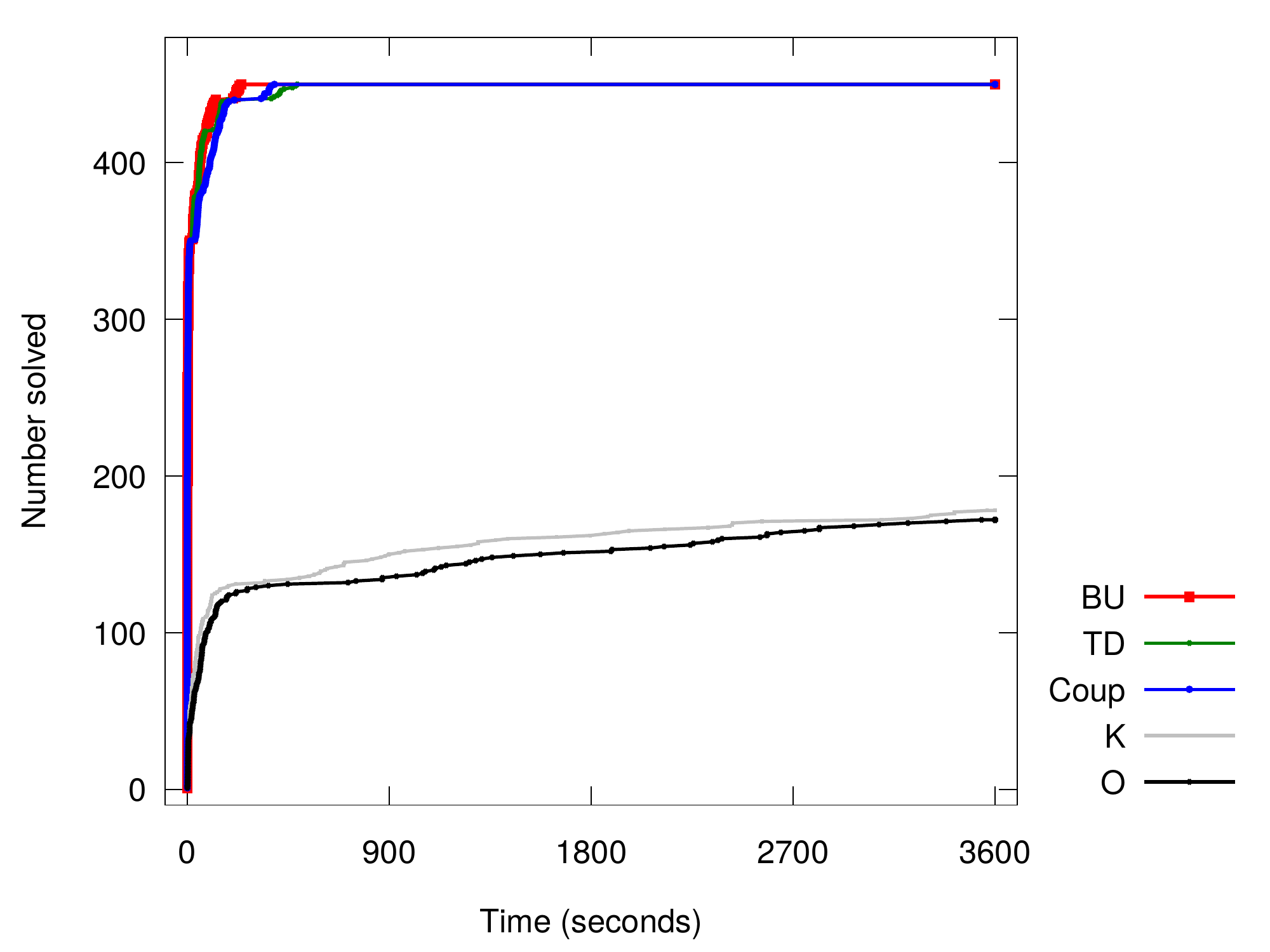}
		\caption{Absolute value cumulative distribution plot}
		\label{fig:AVCD}
	\end{minipage}
	\hfill
	\begin{minipage}[b]{0.41\textwidth}
		\includegraphics[width=1.02\textwidth]{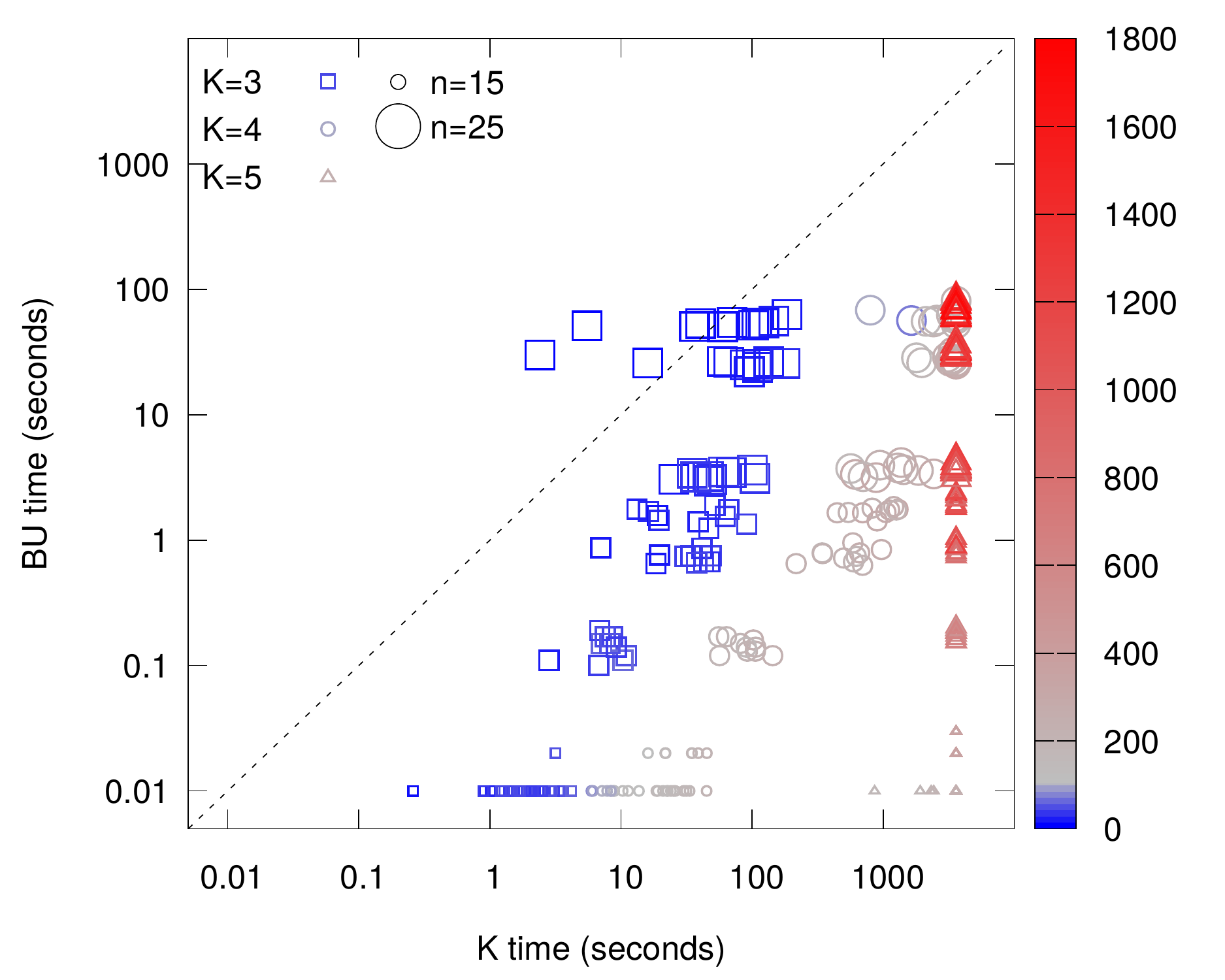}
		\caption{Absolute value scatter plot}
		\label{fig:AVSP}
	\end{minipage}
\end{figure}

Figure~\ref{fig:AVSP} shows a scatter plot comparing the performance of~\bu{} with~\kirlik{}. Algorithm~\kirlik{} outperforms~\bu{} in 5 instances by one order of magnitude (2 vs 30 seconds was the largest relative difference). All instances solved by \kirlik{} were also successfully addressed by \bu{}, with an average running time of 8 seconds and standard deviation of 17. Furthermore, similar to what was observed for the MSCP and MSPP, there is a positive correlation between the size of the Pareto frontiers and the relative superiority of network model procedures over~\kirlik{} and~\aira{}.

\section{Conclusion}
\label{sec:conclusionAndFutureWork}

This paper presents a novel framework for solving multiobjective discrete
optimization problems (MODOs) through a reformulation into \textit{network
models}, enhanced by validity-preserving operations that reduce the size of the
network while preserving the Pareto frontier. The generality of the framework
is established through application to five distinct problem classes, including
a nonlinear multiobjective optimization problem. The experimental evaluation
suggests that the proposed algorithm outperforms exisiting state-of-the-art
general MODO solvers in several multiobjective variants of classical operations
research problems. 

Our methodology assumes memory availability exceeds memory requirements for
constructing and storing network models. Since multiobjective optimization
problems become rapidly more challenging as problem size grows, this was only a
limiting factor for the largest of instances generated, most of which were
beyond the scope of other algorithms we tested against. As memory availability
in modern-day CPUs continues to grow, investigating algorithms specifically
designed to exploit this resource is of great interest, and this
paper provides an aimed step in this direction.





\bibliographystyle{plainnat} 
\bibliography{moobib,MOPalgos}

\newpage

\begin{appendix}


\section{Proofs}
\label{sec:proofs}

In this section we present the proofs for the structural results presented in the main text.

\subsection{Proof of Theorem \ref{thm:reduce}}
\label{subsec:ProofThmOne}
	Let~$\networkmodel'$ be the network model that results from the application of the three operations stated in Theorem \ref{thm:reduce} on nodes~$u_1, u_2$ of~$\networkmodel$. We claim that 
	there exists a one-to-one mapping between path-weights in~$\Pathset_{\networkmodel}$ and path-weights in~$\Pathset_{\networkmodel'}$; i.e., for each path in~$\Pathset_{\networkmodel}$, there is a path in~$\Pathset_{\networkmodel'}$ with same path-weight and vice-versa.
	
	First, every path $p \in \Pathset_{\networkmodel}$ that does not cross $u_2$ remains unchanged in $\Pathset_{\networkmodel'}$, thus the equivalence follows in this case. 
	Let us assume now that~$p = (a_{1}, \ldots,  a_{i},a_{i+1},\ldots,a_{n})$ is a path in  $\Pathset_{\networkmodel}$ such that $\arcterminal{a_i} = \arcroot{a_{i+1}} =  u_2$.
	 After the sequence of operations specified in the statement of the theorem, arc~$a_i$ is substituted for an arc~$a_i'$ in~$\networkmodel'$ such that $\arcroot{a_i'} = \arcroot{a_i}$, $\arcterminal{a_i'} = u_1$, and $\arcweightfunction(a_i') = \arcweightfunction(a_i)$, whereas arc~$a_{i+1}$ is substituted for an arc~$a_{i+1}'$ in~$\networkmodel'$ such that $\arcroot{a_{i+1}'} = u_1$, $\arcterminal{a_{i+1}'} = \arcterminal{a_{i+1}}$, and $\arcweightfunction(a_{i+1}') = \arcweightfunction(a_{i+1})$. These operations yield a one-to-one relationship between~$p$ and the path $p' = (a_{1}, \ldots,  a'_{i},a'_{i+1},\ldots,a_{n})$ in~$\networkmodel'$ which, by construction, is such that~$w(p') = w(p)$. Finally, as no additional paths are introduced,	
	 each path in~$\Pathset_{\networkmodel'}$ is also a path in~$\Pathset_{\networkmodel}$, 
	 so the result follows.	
	\qed

\subsection{Proof of Proposition \ref{prop:VPOcomplexity}} 
\label{subsec:ProofPropTwo}

Weight-shift and node-merge operations  can be implemented in a \emph{bottom-up} iterative procedure as follows. Starting from the penultimate layer, we construct the vector~$\tilde{c}(u)$ for each node~$u \in \layer{n}$ by taking the componentwise minimum arc-weight among arcs in $\arcs^{+}(u)$. Once~$\tilde{c}(u)$ has been obtained, the arc-weights can then be shifted up according to the VPO described in Proposition~\ref{prop:shift}. Each arc is inspected twice (for the identification of the minimum first and for the shift operation afterwards) and each node is visited once; thus, this operation can be performed over the complete network in time $\bigO( \nobj( |\nodes| +  |\arcs| ))$. 

Checking whether nodes~$u_1, u_2$ in layer~$\layer{i}$ satisfy the merging conditions of Theorem~\ref{thm:reduce} is an operation that can be performed in time $\bigO( \nobj  |\layer{i+1}| )$ if an adequate data structure is employed. Let~$\pi$ be an (arbitrary) ordering associated with the nodes of layer~$\layer{i+1}$ and let~$\pi_j$ denote the~$j$-th node of~$\layer{i+1}$ according to this ordering, with the first position starting from zero. Given~$\pi$, one can  construct a $\nobj |\layer{i+1}| $-dimensional  vector~$\xi(u)$ associated with each node~$u$ of layer~$\layer{i}$ such that $(\xi(u))_{\nobj j + k} = w( (u, \pi_j)  )_k $, i.e., entry $\nobj j + k$   contains the value of the~$k$-th coordinate of the 
arc-weight associated with the arc connecting $u$ to vertex~$\pi_{j}$ in~$\layer{i+1}$; some arbitrary value, such as $-\infty$, may be employed for entries associated with absent arcs. 
If the verification above is performed for each pair of nodes composing layer~$\layer{i}$, all merging operations can be verified in time $\bigO( \nobj |\layer{i}|^2 |\layer{i+1}| )$. As each pair of nodes in the network is compared at most once, we have a total running time of $\bigO( \nobj |\nodes|^3 )$. 

\qed

\subsection{Proof of Theorem \ref{thm:NPHardRemoval}}
\label{subsec:ProofThmTwo}
We consider a reduction from the following NP-Hard problem, written in terms of our notation and objective sense:

\smallskip

\noindent \textsc{Unconstrained Biobjective} \citep[Proposition 8.12]{ehrgott2006multicriteria}: Given $c^1, c^2 \in \mathbb{Z}^n$ and $d_1, d_2 \in \mathbb{Z}$, does there exist $x \in \mathbb{B}^n$ such that $\sum_{j=1}^n c^1_j x_j \ge d_1$ and $\sum_{j=1}^n c^2_j x_j \ge d_2$?

\smallskip

Let $c^1, c^2$, and $d := (d_1, d_2)$ define the parameters of an instance of the unconstrained biobjective problem. Equivalently, the problem asks if the given vector $d$ is dominated by a point in the image space of the MODO defined as
\begin{align*}
\tag{UCB} \label{eq:UCB} 
\max \left\{  (c^1)^\top x, (c^2)^\top x  : x \in \mathbb{B}^n \right  \}.
\end{align*}

We construct a network $\tilde{\networkmodel}$ where deciding if $d \in \nds(\tilde{\networkmodel})$ is equivalent to applying an arc-removal VPO for an appropriate subset of the arcs of $\tilde{\networkmodel}$. To this end, let us first consider a valid network model $\networkmodel = (\nodes, \arcs)$ for \ref{eq:UCB}. Since there are no constraints, we can construct a straightforward valid network model with one node per layer $\layer{j}$, $j \in [n+1]$, and two arcs $a_1, a_2$ connecting the node $u_1$ from layer 
$\layer{j}$ to node $u_2$ in layer $\layer{j+1}$, $j \in [n]$. In particular, $\arcweights{a_1} = \mathbf{0}$ (representing the assignment $x_j=0$) and $\arcweights{a_2} = \left( c^1_j, c^2_j \right)$ (representing the assignment $x_j=1$). The network $\networkmodel$ is depicted in Figure \ref{fig:UnconstraintMODO}\subref{fig:unconstNetwork}. Note that it has size polynomial in $n$, consisting of $n+1$ nodes and 
$2n$ arcs.

\begin{figure}[h]
\centering
\subfigure[$\networkmodel$]{
\begin{tikzpicture}[scale=0.25][font=\sffamily,\tiny]
	\node [draw,circle,minimum size=0.68cm] (root) {$\rootnode_{\networkmodel}$};
	\node [draw,circle,minimum size=0.68cm,below of = root,yshift = -4.5cm] (terminal) {$\terminalnode_{\networkmodel}$};
	
	\node [draw,circle,minimum size=0.68cm,below of = root,yshift = -0.55cm] (node1) {};
	\node [below of = node1,yshift = -0.25cm] (node2) {};
	\node [draw,circle,minimum size=0.68cm,below of = node2,yshift = -0.1cm] (node3) {};
	
	\path[very thick,dotted,shorten >= -0pt,every node/.style={font=\sffamily\small}] (root) edge[bend right] node [left] {$(c^1_1, c^1_2)$} (node1);
	\path[very thick,dotted,shorten >= -1.2pt,every node/.style={font=\sffamily\small}] (node1) edge[bend right] node [left] {} (node2);
	\path[very thick,dotted,shorten >= -0pt,every node/.style={font=\sffamily\small}] (node2) edge[bend right] node [left] {} (node3);
	\path[very thick,dotted,shorten >= -0pt,every node/.style={font=\sffamily\small}] (node3) edge[bend right] node [left] {$(c^1_n, c^2_n)$} (terminal);

	\path[very thick,dotted,shorten <= -0pt,every node/.style={font=\sffamily\small}] (root) edge[bend left] node [right] {$\textbf{0}$} (node1);
	\path[very thick,dotted,shorten <= -0.5pt,every node/.style={font=\sffamily\small}] (node1) edge[bend left] node [right] {} (node2);
	\path[very thick,dotted,shorten <= -0.1pt,every node/.style={font=\sffamily\small}] (node2) edge[bend left] node [right] {} (node3);
	\path[very thick,dotted,shorten <= -0pt,every node/.style={font=\sffamily\small}] (node3) edge[bend left] node [right] {$\textbf{0}$} (terminal);
	
\end{tikzpicture}
\label{fig:unconstNetwork}
}
\hspace*{2.5cm}
\subfigure[$\networkmodel'$] {
\begin{tikzpicture}[scale=0.3][font=\sffamily,\tiny]
	\node [draw,circle,minimum size=0.68cm] (rootnew) {$\rootnode_{\networkmodel'}$};
	\node [draw,circle,,minimum size=0.68cm,right of = rootnew,yshift = -1.25cm, xshift = 0.1cm] (node2) {};
	
	\node [draw,circle,minimum size=0.68cm,left of = rootnew,yshift = -1.25cm, xshift = -0.1cm] (root) {$\rootnode_{\networkmodel}$};
	\node [draw,circle,minimum size=0.68cm,below of = root,yshift = -1cm] (terminal) {$\terminalnode_{\networkmodel}$};
	
	\node [draw,circle,minimum size=0.68cm]  (node3) at (root -| node2) {};
	\node [draw,circle,minimum size=0.68cm] (node4) at (terminal -| node2) {};

	\node [draw,circle,minimum size=0.68cm,below of = rootnew,yshift = -3.5cm] (terminalnew) {$\terminalnode_{\networkmodel'}$};
	\path[->](rootnew) edge node [xshift=-0mm,yshift=2mm,left] {\footnotesize $\arcweightfunction(a') = \textbf{0}$} (root);
	\path[->](rootnew) edge node [xshift=-0.6mm,yshift=2mm,right] {\footnotesize $ d =\arcweightfunction(a'')$} (node2);
	\draw[dotted,very thick] (node3) edge node [xshift=-1mm,right] {\footnotesize $\textbf{0}$} (node4);
	\path[->](node4) edge node [xshift=-1mm,yshift=-1mm,right] {\footnotesize $\textbf{0}$} (terminalnew);
	\path[->](terminal) edge node [xshift=-0.2mm,yshift=-1mm,left] {\footnotesize $\textbf{0}$} (terminalnew);
	
	\node [below of = root,yshift = -0cm] (dummy) {};
	\path[very thick,dotted,shorten >= -1.2pt,every node/.style={font=\sffamily\small}] (root) edge[bend right] node [left] {} (dummy);
	\path[very thick,dotted,shorten <= -1.2pt,every node/.style={font=\sffamily\small}] (dummy) edge[bend right] node [left] {} (terminal);
	\path[very thick,dotted,shorten >= -1.2pt,every node/.style={font=\sffamily\small}] (root) edge[bend left] node [right] {} (dummy);
	\path[very thick,dotted,shorten <= -1.2pt,every node/.style={font=\sffamily\small}] (dummy) edge[bend left] node [right] {} (terminal);
\end{tikzpicture}
\label{fig:extendednetwork}
}
\caption{A network model $\networkmodel$ of a binary unconstrained MODO (a), and an extended network model $\tilde{\networkmodel}$ of $\networkmodel$ (b), used in the proof of Theorem \ref{thm:NPHardRemoval}.}
\label{fig:UnconstraintMODO}
\end{figure}

We now design a new network model~$\tilde{\networkmodel}$ by extending~$\networkmodel$ as depicted in Figure \ref{fig:UnconstraintMODO}\subref{fig:extendednetwork}. 
The root node~$\rootnode_{\tilde{\networkmodel}}$ is the arc-root of two arcs~$a'$ and~$a''$. 
Arc~$a'$ has arc-weight~$\arcweightfunction(a') = \textbf{0}$  and connects~$\rootnode_{\tilde{\networkmodel}}$ to the corresponding~$\rootnode_{\networkmodel}$ in the new network. 
Arc~$a''$ has arc-weight~$\arcweightfunction(a'') = d$, and~$\tilde{\networkmodel}$ contains a single path~$p'' = (\rootnode_{\tilde{\networkmodel}}, \arcterminal{a''},\ldots,\terminalnode_{\tilde{\networkmodel}})$ connecting $\rootnode_{\tilde{\networkmodel}}$  to~$\terminalnode_{\tilde{\networkmodel}}$ that traverses node~$\arcterminal{a''}$; 
the arc-weight of all arcs in~$p''$ (except~$a''$) is equal to zero.   Finally, a single arc with arc-weight $\textbf{0}$ connects~$\terminalnode_{\tilde{\networkmodel}}$ to the associated~$\terminalnode_{\networkmodel}$ in the new node. 

Let~$\arcs'$ be the subset of~$\arcs_{\tilde{\networkmodel}}$ containing all arcs that compose path~$p''$. If~$d$ is dominated by~$\nds(\tilde{\networkmodel})$, any arc~$a$ in $\arcs'$ may be removed from~$\tilde{\networkmodel}$ without changing its Pareto frontier, i.e., $\paretofrontier{\tilde{\networkmodel}} = \paretofrontier{\tilde{\networkmodel} - a}$. Otherwise, by construction, $d$ necessarily belongs to~$\paretofrontier{\tilde{\networkmodel}}$,  thus 
$d \in \paretofrontier{\networkmodel}$. Hence, no arc is removed from~$\arcs'$. Since all steps are polynomially bounded on $n$, the result follows.
\qed
	
\subsection{Proof of Theorem \ref{thm:subBDDarcRemoval}}
\label{subsec:ProofThmThree}
Suppose by contradiction that there exists a pair of nodes $u$ and $v$ that are isolating in $\networkmodel$, together with an arc~$a$ whose removal is a VPO for~$\networkmodel[u,v]$ but not for~$\networkmodel$.  By definition, if such an arc~$a$  exists, then there is a path $p$ in $\Pathset_{\networkmodel}$ from $\rootnode_\networkmodel$ to $\terminalnode_\networkmodel$ containing~$a$ such that $w(p) \in \paretofrontier{\networkmodel}$; additionally, $\Pathset_{\networkmodel}$ shall not contain any other path~$p''$ in $\Pathset_{\networkmodel}$ such that $w(p'') \strictlydominates  w(p)$.

Since $a \in \arcs_{\networkmodel[u,v]}$, and $u$ and $v$ are isolating, $p$ must traverse $u$ and $v$. Let $p'$ be the subpath of $p$ directed from $u$ to $v$. Additionally, let $p^1$ be the portion of $p$ from $\rootnode_\networkmodel$ to $u$, and let $p^2$ be the portion of $p$ from $v$ to $\terminalnode_\networkmodel$. As the removal of $a$ from $\networkmodel[u,v]$ does not alter $\paretofrontier{\networkmodel[u,v]}$, there must exist a path $p''$ from $u$ to $v$ for which $w(p') \strictlydominated w(p'')$. Let $\tilde{p}$ be the path in $\networkmodel$ constructed from concatenating $p^1, p''$, and $p^2$. Then,  $w(\tilde{p}) \strictlydominates w(p)$ (as $w(\tilde{p}) \neq w(p)$ by assumption), which
contradicts that $w(p) \in \paretofrontier{\networkmodel}$. 
\qed

\section{Recursive Model Example}
\label{ec:recursive}
\vspace*{-0.2cm}
\begin{align*}
\mathcal{X} = \{ x \in \mathbb{B}^7 \ : \ x_1 + x_2 + x_3 \leq 1, x_2 + x_3  + x_4 \leq 1, x_4 + x_5 \leq 1, x_4 + x_6 \leq 1,  x_5 + x_7 \leq 1,x_6 + x_7 \leq 1 \}.
\end{align*}

\vspace*{-0.3cm}
\begin{figure}[h]
	\centering
	\begin{tikzpicture}[scale=0.3][font=\sffamily,\tiny]
	
	\node [draw,rectangle,scale=1] (r) at (0,0) {$s_0 = (0,0,0,0,0,0)$};
	\node [draw,rectangle,scale=1] (a) at (-6,-5) {$u^2_1 = (0,0,0,0,0,0)$};
	\node [draw,rectangle,scale=1] (b) at (6,-5) {$u^2_2= (1,0,0,0,0,0)$};
	\node [draw,rectangle,scale=1] (c) at (-12,-11) {$u^3_1= (1,1,0,0,0,0)$};
	\node [draw,rectangle,scale=1] (d) at (0,-11) {$u^3_2= (0,0,0,0,0,0)$};
	\node [draw,rectangle,scale=1] (e) at (12,-11) {$u^3_3= (1,0,0,0,0,0)$};
	\node [draw,rectangle,scale=1] (f) at (-6,-17) {$u^4_1= (1,1,0,0,0,0)$};
	\node [draw,rectangle,scale=1] (g) at (6,-17) {$u^4_2= (1,0,0,0,0,0)$};
	\node [draw,rectangle,scale=1] (h) at (-6,-23) {$u^5_1= (1,1,0,0,0,0)$};
	\node [draw,rectangle,scale=1] (i) at (6,-23) {$u^5_2= (1,1,1,1,0,0)$};
	\node [draw,rectangle,scale=1] (j) at (-16,-29) {$u^6_1= (1,1,1,0,1,0)$};
	\node [draw,rectangle,scale=1] (k) at (1.8,-29) {$u^6_2= (1,1,1,0,0,0)$};
	\node [draw,rectangle,scale=1] (l) at (12,-29) {$u^6_3= (1,1,1,1,0,0)$};
	\node [draw,rectangle,scale=1] (m1) at (-22,-35) {$u^7_1= (1,1,1,1,1,1)$};
	\node [draw,rectangle,scale=1] (m2) at (-13,-35) {$u^7_2= (1,1,1,1,1,0)$};
	\node [draw,rectangle,scale=1] (m) at (-3,-35) {$u^7_3= (1,1,1,1,0,1)$};
	\node [draw,rectangle,scale=1] (n) at (8,-35) {$u^7_4= (1,1,1,1,0,0)$};
	\node [draw,rectangle,scale=1] (t) at (0,-42) {$s_\terminalnode = (1,1,1,1,1,1)$};
	
	
	\path[->,shorten >=0.08cm](r) edge node [xshift=-1mm,yshift=0.2mm,left] {$x_1 = 0$} (a);
	\path[->,shorten >=0.08cm](r) edge node [xshift=1mm,yshift=0.2mm,right] {$x_1 = 1$} (b);
	
	\path[->,shorten >=0.08cm](a) edge node [xshift=-2mm,yshift=-1mm,left] {$x_2 = 1$} (c);
	\path[->,shorten >=0.08cm](a) edge node [xshift=0.3mm,yshift=-1mm,left] {$x_2 = 0$} (d);
	\path[->,shorten >=0.08cm](b) edge node [xshift=2.3mm,yshift=-1mm,right] {$x_2 = 0$} (e);
	
	\path[->,shorten >=0.08cm](c) edge node [left,xshift=-0.8mm] {$x_3 = 0$} (f);
	\path[->,shorten >=0.08cm](d) edge node [left] {$x_3 = 1$} (f);
	\path[->,shorten >=0.08cm](d) edge node [right] {$x_3 = 0$} (g);
	\path[->,shorten >=0.08cm](e) edge node [right] {$x_3 = 0$} (g);
	
	\path[->,shorten >=0.08cm](f) edge node [left] {$x_4 = 0$} (h);
	\path[->,shorten >=0.08cm](g) edge node [left,xshift=-0.8mm] {$x_4 = 0$} (h);
	\path[->,shorten >=0.08cm](g) edge node [right] {$x_4 = 1$} (i);
	
	\path[->,shorten >=0.08cm](h) edge node [left,xshift=-0.8mm] {$x_5 = 1$} (j);
	\path[->,shorten >=0.08cm](h) edge node [right,xshift=0.8mm] {$x_5 = 0$} (k);
	\path[->,shorten >=0.08cm](i) edge node [right,xshift=0.2mm] {$x_5 = 0$} (l);
	
	\path[->,shorten >=0.08cm](j) edge node [left,xshift=-0.05cm] {$x_6 = 1$} (m1);
	\path[->,shorten >=0.08cm](j) edge node [right] {$x_6 = 0$} (m2);
	\path[->,shorten >=0.08cm](k) edge node [right] {$x_6 = 1$} (m);
	\path[->,shorten >=0.08cm](k) edge node [right] {$x_6 = 0$} (n);
	\path[->,shorten >=0.08cm](l) edge node [right] {$x_6 = 0$} (n);
	
	\path[->,shorten >=0.08cm](m1) edge node [left,xshift=-0.3cm] {$x_7 = 0$} ([xshift=-3cm] t.north);
	\path[->,shorten >=0.08cm](m2) edge node [left,xshift=-0.15cm] {$x_7 = 0$} (t);
	\path[->,shorten >=0.08cm](m) edge node [left,xshift=-0.05cm] {$x_7 = 0$} (t);
	\path[->,shorten >=0.08cm](n) edge node [left,xshift=-0.1cm] {$x_7 = 0$} (t);
	\path[->,shorten >=0.08cm](n.south) edge node [right] {$x_7 = 1$} ([xshift=3cm] t.north);
	\end{tikzpicture}
	\caption{A valid network model for $\modo$ in Example~\ref{ex:modo} obtained by recursive formulation given in Example \ref{ex:modorec}.}
	\label{fig:recursivemodelexample}
\end{figure}

\section{Data Generation}

\paragraph{Multiobjective 0-1 Knapsack Problem.} We experimented on 450 instances of the knapsack problem with
$\nobj \in \{ 3,4,\dots,7 \}$ and $n \in \{20, 30, \dots, 100\}$. Instances were generated as in \cite{kirlik2014new}: each profit $p^k_j$ and weight $w_j$ was randomly draw from 
the interval $[1,1000]$, for $j \in [n]$ and $k \in [\nobj]$. The capacity of the knapsack was set to $W := \lceil 0.5 \sum_{i=1}^n w_i \rceil$. Ten instances were generated for each
$(n, \nobj)$ pair.

\bigskip

\paragraph{Multiobjective Set Covering and Set Partitioning Problems.} We generated 150 random instances based on the previous work by \cite{Stidsen2014}. Specifically, we considered $n \in \{ 100,150,200 \}$ variables, $m=n/5$ constraints, and
fixed $10$ variables per constraint (i.e., for every $k = 1,\dots,m$, we pick 10 elements
of the $k$-th row of $A$ to be 1 uniformly at random). We considered $\nobj=3,\dots,7$ objectives. We sampled 10 instances per $(n,\nobj)$ pair. Each matrix was used both for the MSCP and the MSPP by selecting the appropriate inequality direction in the ensuing constraint system.

\bigskip

\paragraph{Multiobjective Traveling Salesperson Problem.} We experimented on 150 instances with $n \in \{5, 10, 15\}$ vertices and $\nobj \in \{3, 4, 5, 6, 7\}$. These 
 instances are generated as in \cite{ozpeynirci2010exact}: for each $\nobj$, we generated integer coordinates for $n$ cities on a $1000 \times 1000$ square (uniformly at random) and used Euclidean distances to create the distance matrix. 

\bigskip

\paragraph{Multiobjective Cardinality-Constrained Absolute Value Problem.} We experimented on 6,250 instances of the problem. Each was randomly generated with $K \in \{3,4,5,6,7\}$  and $n \in \{5,10,15,20,25\}$. For all $k \in [K]$, we drew the components of $a^k$ uniformly
at random from the set $[-M,M]$, where we considered $M \in \{ 50, 100, 150, 200, 250 \}$. We let $b_k = \lfloor \boldsymbol{1}^\top a^k /2 \rfloor$ for all $k \in [\nobj]$. For the cardinality constraints, we let $C = \lfloor n \delta  \rfloor$ for which we considered $\delta \in \{0.1, 0.2, 0.3, 0.4, 0.5\}$.

\section{Detailed Computational Results}

In this section we provide a comprehensive reporting of the computational results. Each line in each table presents the aggregated results of instances whose dimensions are described in the leftmost column: $n$ indicates the number of variables and~$\nobj$ gives the number of objective functions. The other columns are arranged in groups of three and present the results for the algorithm indicated on the top. Column $S$ gives the number of instances that were solved within the predefined memory and time limits, and~$\bar{t}$ is the average runtime of the algorithm to solve these instances. Column $(\texttt{M},\texttt{T})$ report pairs~$(m,t)$, indicating that the algorithm could not solve~$m$ and~$t$ instances of that particular configuration because it exceeded the predefined memory and time limits, respectively.

\begin{landscape}
	
	\begin{table}[]
		\centering
		\tiny
		\caption{Knapsack results}
		\label{tab:KnapsackResults}	
		\begin{tabular}{P{2ex}P{1.5ex}|P{1ex}P{6ex}P{6ex}|P{1ex}P{6ex}P{6ex}|P{1ex}P{6ex}P{6ex}|P{1ex}P{6ex}P{6ex}|P{1ex}P{6ex}P{6ex}|P{1ex}P{6ex}P{6ex}|P{1ex}P{6ex}P{6ex}|P{1ex}P{6ex}P{6ex}} 
			\multicolumn{2}{c}{}  & 
			\multicolumn{3}{c}{\texttt{K}} & 
			\multicolumn{3}{c}{\texttt{O}} & 
			\multicolumn{3}{c}{\texttt{TD}} & 
			\multicolumn{3}{c}{\texttt{BU}} & 
			\multicolumn{3}{c}{\texttt{Coup}} & 
			\multicolumn{3}{c}{\texttt{TD$^+$}} & 
			\multicolumn{3}{c}{\texttt{BU$^+$}} & 
			\multicolumn{3}{c}{\texttt{Coup$^+$}} 
			\\ \hline \Tstrut \Bstrut
			$n$	&	$\nobj$ 	& $\mathsf{S}$	& $\bar{t}$	& $(\mathsf{M},\mathsf{T})$ 	& $\mathsf{S}$	& $\bar{t}$	& $(\mathsf{M},\mathsf{T})$  	& $\mathsf{S}$	& $\bar{t}$	& $(\mathsf{M},\mathsf{T})$  	& $\mathsf{S}$	& $\bar{t}$	& $(\mathsf{M},\mathsf{T})$  	& $\mathsf{S}$	& $\bar{t}$	& $(\mathsf{M},\mathsf{T})$  	& $\mathsf{S}$	& $\bar{t}$	& $(\mathsf{M},\mathsf{T})$  	& $\mathsf{S}$	& $\bar{t}$	& $(\mathsf{M},\mathsf{T})$  	& $\mathsf{S}$	& $\bar{t}$	& $(\mathsf{M},\mathsf{T})$  \\ \hline \hline \Bstrut\Tstrut
			20  & 3 & 10 & 0.93    & (0,0)  & 10 & 1.26    & (0,0)  & 10 & 0.02    & (0,0)  & 10 & 0.02    & (0,0)  & 10 & 0.02    & (0,0)  & 10 & 0.02    & (0,0)  & 10 & 0.02    & (0,0)  & 10 & 0.02    & (0,0)  \\
			20  & 4 & 10 & 34.41   & (0,0)  & 10 & 61.45   & (0,0)  & 10 & 0.03    & (0,0)  & 10 & 0.03    & (0,0)  & 10 & 0.03    & (0,0)  & 10 & 0.02    & (0,0)  & 10 & 0.03    & (0,0)  & 10 & 0.03    & (0,0)  \\
			20  & 5 & 6  & 200.79  & (0,4)  & 8  & 766.91  & (0,2)  & 10 & 0.03    & (0,0)  & 10 & 0.03    & (0,0)  & 10 & 0.03    & (0,0)  & 10 & 0.03    & (0,0)  & 10 & 0.03    & (0,0)  & 10 & 0.03    & (0,0)  \\
			20  & 6 & 0  & -       & (2,8)  & 0  & -       & (0,10) & 10 & 0.03    & (0,0)  & 10 & 0.04    & (0,0)  & 10 & 0.03    & (0,0)  & 10 & 0.03    & (0,0)  & 10 & 0.03    & (0,0)  & 10 & 0.02    & (0,0)  \\
			20  & 7 & 0  & -       & (0,10) & 0  & -       & (0,10) & 10 & 0.04    & (0,0)  & 10 & 0.07    & (0,0)  & 10 & 0.04    & (0,0)  & 10 & 0.05    & (0,0)  & 10 & 0.05    & (0,0)  & 10 & 0.04    & (0,0)  \\ \hline \Tstrut \Bstrut
			30  & 3 & 10 & 5.22    & (0,0)  & 10 & 6.98    & (0,0)  & 10 & 0.14    & (0,0)  & 10 & 0.22    & (0,0)  & 10 & 0.16    & (0,0)  & 10 & 0.13    & (0,0)  & 10 & 0.21    & (0,0)  & 10 & 0.13    & (0,0)  \\
			30  & 4 & 10 & 676.25  & (0,0)  & 10 & 452.18  & (0,0)  & 10 & 0.18    & (0,0)  & 10 & 0.53    & (0,0)  & 10 & 0.2     & (0,0)  & 10 & 0.18    & (0,0)  & 10 & 0.54    & (0,0)  & 10 & 0.18    & (0,0)  \\
			30  & 5 & 0  & -       & (1,9)  & 0  & -       & (0,10) & 10 & 0.3     & (0,0)  & 10 & 1.42    & (0,0)  & 10 & 0.34    & (0,0)  & 10 & 0.38    & (0,0)  & 10 & 1.27    & (0,0)  & 10 & 0.26    & (0,0)  \\
			30  & 6 & 0  & -       & (0,10) & 0  & -       & (0,10) & 10 & 0.72    & (0,0)  & 10 & 4.78    & (0,0)  & 10 & 0.87    & (0,0)  & 10 & 0.97    & (0,0)  & 10 & 4.22    & (0,0)  & 10 & 0.5     & (0,0)  \\
			30  & 7 & 0  & -       & (0,10) & 0  & -       & (0,10) & 10 & 1.75    & (0,0)  & 10 & 6.24    & (0,0)  & 10 & 1.63    & (0,0)  & 10 & 2.8     & (0,0)  & 10 & 5.25    & (0,0)  & 10 & 0.97    & (0,0)  \\ \hline \Tstrut \Bstrut
			40  & 3 & 10 & 22.07   & (0,0)  & 10 & 30.81   & (0,0)  & 10 & 0.67    & (0,0)  & 10 & 1.69    & (0,0)  & 10 & 0.64    & (0,0)  & 10 & 0.66    & (0,0)  & 10 & 1.64    & (0,0)  & 10 & 0.52    & (0,0)  \\
			40  & 4 & 4  & 1685.96 & (0,6)  & 5  & 1988.99 & (0,5)  & 10 & 3.22    & (0,0)  & 10 & 20.57   & (0,0)  & 10 & 3.16    & (0,0)  & 10 & 3.79    & (0,0)  & 10 & 20.42   & (0,0)  & 10 & 1.88    & (0,0)  \\
			40  & 5 & 0  & -       & (1,9)  & 0  & -       & (0,10) & 10 & 11.66   & (0,0)  & 10 & 46.41   & (0,0)  & 10 & 8.05    & (0,0)  & 10 & 16.54   & (0,0)  & 10 & 40.95   & (0,0)  & 10 & 4.44    & (0,0)  \\
			40  & 6 & 0  & -       & (0,10) & 0  & -       & (0,10) & 10 & 37.16   & (0,0)  & 10 & 251.22  & (0,0)  & 10 & 39.25   & (0,0)  & 10 & 55.53   & (0,0)  & 10 & 240.71  & (0,0)  & 10 & 20.83   & (0,0)  \\
			40  & 7 & 0  & -       & (0,10) & 0  & -       & (0,10) & 10 & 89.58   & (0,0)  & 9  & 281.21  & (0,1)  & 10 & 86.99   & (0,0)  & 10 & 130.27  & (0,0)  & 9  & 250.89  & (0,1)  & 10 & 63.02   & (0,0)  \\ \hline \Tstrut \Bstrut
			50  & 3 & 10 & 36.97   & (0,0)  & 10 & 52.2    & (0,0)  & 10 & 2.25    & (0,0)  & 10 & 5.17    & (0,0)  & 10 & 1.71    & (0,0)  & 10 & 2.17    & (0,0)  & 10 & 5.33    & (0,0)  & 10 & 1.52    & (0,0)  \\
			50  & 4 & 0  & -       & (0,10) & 2  & 2841.42 & (0,8)  & 10 & 23.46   & (0,0)  & 10 & 134.57  & (0,0)  & 10 & 11.04   & (0,0)  & 10 & 28.59   & (0,0)  & 10 & 125.92  & (0,0)  & 10 & 6.74    & (0,0)  \\
			50  & 5 & 0  & -       & (0,10) & 0  & -       & (0,10) & 10 & 110.5   & (0,0)  & 10 & 1122.29 & (0,0)  & 10 & 64.76   & (0,0)  & 10 & 136.59  & (0,0)  & 10 & 996.69  & (0,0)  & 10 & 38.8    & (0,0)  \\
			50  & 6 & 0  & -       & (1,9)  & 0  & -       & (0,10) & 9  & 484.95  & (0,1)  & 5  & 1305.47 & (0,5)  & 10 & 507.23  & (0,0)  & 9  & 724.16  & (0,1)  & 5  & 1249.05 & (0,5)  & 10 & 400.63  & (0,0)  \\
			50  & 7 & 0  & -       & (0,10) & 0  & -       & (0,10) & 9  & 1737.94 & (0,1)  & 2  & 1654.82 & (0,8)  & 9  & 1195.03 & (0,1)  & 6  & 1514.52 & (0,4)  & 2  & 1716.35 & (0,8)  & 10 & 971.73  & (0,0)  \\ \hline \Tstrut \Bstrut
			60  & 3 & 10 & 107.96  & (0,0)  & 10 & 143.68  & (0,0)  & 10 & 9.04    & (0,0)  & 10 & 25.51   & (0,0)  & 10 & 4.6     & (0,0)  & 10 & 9.07    & (0,0)  & 10 & 23.45   & (0,0)  & 10 & 3.97    & (0,0)  \\
			60  & 4 & 0  & -       & (0,10) & 0  & -       & (0,10) & 10 & 169.13  & (0,0)  & 10 & 792.27  & (0,0)  & 10 & 51.03   & (0,0)  & 10 & 191.15  & (0,0)  & 10 & 784.7   & (0,0)  & 10 & 34.05   & (0,0)  \\
			60  & 5 & 0  & -       & (1,9)  & 0  & -       & (0,10) & 8  & 1439.71 & (0,2)  & 2  & 2158.86 & (0,8)  & 10 & 601.92  & (0,0)  & 6  & 1007.89 & (0,4)  & 1  & 834.29  & (0,9)  & 10 & 416.31  & (0,0)  \\
			60  & 6 & 0  & -       & (1,9)  & 0  & -       & (0,10) & 6  & 2722.71 & (0,4)  & 0  & -       & (0,10) & 6  & 1389.04 & (0,4)  & 2  & 1900.28 & (0,8)  & 0  & -       & (0,10) & 7  & 1124.37 & (0,3)  \\
			60  & 7 & 0  & -       & (0,10) & 0  & -       & (0,10) & 1  & 2762.75 & (0,9)  & 0  & -       & (0,10) & 3  & 2454.84 & (0,7)  & 0  & -       & (0,10) & 0  & -       & (0,10) & 5  & 2174.34 & (0,5)  \\ \hline \Tstrut \Bstrut
			70  & 3 & 10 & 271.77  & (0,0)  & 10 & 340.95  & (0,0)  & 10 & 28.85   & (0,0)  & 10 & 125.02  & (0,0)  & 10 & 11.85   & (0,0)  & 10 & 28.43   & (0,0)  & 10 & 126.12  & (0,0)  & 10 & 9.08    & (0,0)  \\
			70  & 4 & 0  & -       & (0,10) & 0  & -       & (0,10) & 10 & 1092.52 & (0,0)  & 4  & 1605.46 & (0,6)  & 10 & 257.09  & (0,0)  & 9  & 903.07  & (0,1)  & 4  & 1556.9  & (0,6)  & 10 & 163.71  & (0,0)  \\
			70  & 5 & 0  & -       & (1,9)  & 0  & -       & (0,10) & 2  & 3419.18 & (0,8)  & 0  & -       & (0,10) & 6  & 2052.85 & (0,4)  & 0  & -       & (0,10) & 0  & -       & (0,10) & 8  & 1554.53 & (0,2)  \\
			70  & 6 & 0  & -       & (0,10) & 0  & -       & (0,10) & 0  & -       & (0,10) & 0  & -       & (0,10) & 2  & 1998.62 & (0,8)  & 0  & -       & (0,10) & 0  & -       & (0,10) & 2  & 1165.76 & (0,8)  \\
			70  & 7 & 0  & -       & (1,9)  & 0  & -       & (0,10) & 0  & -       & (0,10) & 0  & -       & (0,10) & 1  & 3338.91 & (0,9)  & 0  & -       & (0,10) & 0  & -       & (0,10) & 2  & 2254.76 & (0,8)  \\ \hline \Tstrut \Bstrut
			80  & 3 & 10 & 401.49  & (0,0)  & 10 & 489.45  & (0,0)  & 10 & 95.62   & (0,0)  & 10 & 356     & (0,0)  & 10 & 29.46   & (0,0)  & 10 & 98.89   & (0,0)  & 10 & 351.56  & (0,0)  & 10 & 23.84   & (0,0)  \\
			80  & 4 & 0  & -       & (0,10) & 0  & -       & (0,10) & 4  & 846.36  & (0,6)  & 1  & 2285.8  & (0,9)  & 10 & 997.93  & (0,0)  & 4  & 912.5   & (0,6)  & 1  & 2396.79 & (0,9)  & 10 & 1012.99 & (0,0)  \\
			80  & 5 & 0  & -       & (3,7)  & 0  & -       & (0,10) & 0  & -       & (0,10) & 0  & -       & (0,10) & 4  & 2676.08 & (0,6)  & 0  & -       & (0,10) & 0  & -       & (0,10) & 4  & 2225.35 & (0,6)  \\
			80  & 6 & 0  & -       & (0,10) & 0  & -       & (0,10) & 0  & -       & (0,10) & 0  & -       & (0,10) & 0  & -       & (0,10) & 0  & -       & (0,10) & 0  & -       & (1,9)  & 0  & -       & (0,10) \\
			80  & 7 & 0  & -       & (0,10) & 0  & -       & (0,10) & 0  & -       & (0,10) & 0  & -       & (0,10) & 0  & -       & (0,10) & 0  & -       & (0,10) & 0  & -       & (1,9)  & 0  & -       & (0,10) \\ \hline \Tstrut \Bstrut
			90  & 3 & 10 & 778.15  & (0,0)  & 10 & 850.51  & (0,0)  & 10 & 283.73  & (0,0)  & 10 & 748.69  & (0,0)  & 10 & 71.74   & (0,0)  & 10 & 289.5   & (0,0)  & 10 & 750.2   & (0,0)  & 10 & 59.84   & (0,0)  \\
			90  & 4 & 0  & -       & (0,10) & 0  & -       & (0,10) & 1  & 2421.44 & (0,9)  & 0  & -       & (0,10) & 7  & 1597.38 & (0,3)  & 1  & 2720.16 & (0,9)  & 0  & -       & (0,10) & 8  & 1499.22 & (0,2)  \\
			90  & 5 & 0  & -       & (3,7)  & 0  & -       & (0,10) & 0  & -       & (0,10) & 0  & -       & (0,10) & 0  & -       & (0,10) & 0  & -       & (0,10) & 0  & -       & (0,10) & 0  & -       & (0,10) \\
			90  & 6 & 0  & -       & (0,10) & 0  & -       & (0,10) & 0  & -       & (0,10) & 0  & -       & (0,10) & 0  & -       & (0,10) & 0  & -       & (0,10) & 0  & -       & (1,9)  & 0  & -       & (0,10) \\
			90  & 7 & 0  & -       & (0,10) & 0  & -       & (0,10) & 0  & -       & (0,10) & 0  & -       & (0,10) & 0  & -       & (0,10) & 0  & -       & (0,10) & 0  & -       & (0,10) & 0  & -       & (0,10) \\ \hline \Tstrut \Bstrut
			100 & 3 & 8  & 1631.27 & (0,2)  & 8  & 1530.55 & (0,2)  & 10 & 967.47  & (0,0)  & 7  & 1684.11 & (0,3)  & 10 & 262.53  & (0,0)  & 10 & 1048.7  & (0,0)  & 7  & 1688.68 & (0,3)  & 10 & 212.64  & (0,0)  \\
			100 & 4 & 0  & -       & (0,10) & 0  & -       & (0,10) & 0  & -       & (0,10) & 0  & -       & (0,10) & 1  & 745.24  & (0,9)  & 0  & -       & (0,10) & 0  & -       & (0,10) & 2  & 1764.28 & (0,8)  \\
			100 & 5 & 0  & -       & (2,8)  & 0  & -       & (0,10) & 0  & -       & (0,10) & 0  & -       & (0,10) & 0  & -       & (0,10) & 0  & -       & (0,10) & 0  & -       & (0,10) & 0  & -       & (0,10) \\
			100 & 6 & 0  & -       & (2,8)  & 0  & -       & (0,10) & 0  & -       & (0,10) & 0  & -       & (0,10) & 0  & -       & (0,10) & 0  & -       & (0,10) & 0  & -       & (0,10) & 0  & -       & (0,10) \\
			100 & 7 & 0  & -       & (0,10) & 0  & -       & (0,10) & 0  & -       & (0,10) & 0  & -       & (0,10) & 0  & -       & (0,10) & 0  & -       & (0,10) & 0  & -       & (3,7)  & 0  & -       & (0,10)
		\end{tabular}
	\end{table}
	
\end{landscape}

\begin{table}[]
	\centering
	\tiny
	\caption{Set covering results}
	\label{tab:SetCoveringResults}
	\begin{tabular}{P{2ex}P{1.5ex}|P{1ex}P{6ex}P{6ex}|P{1ex}P{6ex}P{6ex}|P{1ex}P{6ex}P{6ex}|P{1ex}P{6ex}P{6ex}|P{1ex}P{6ex}P{6ex}|P{1ex}P{6ex}P{6ex}|P{1ex}P{6ex}P{6ex}|P{1ex}P{6ex}P{6ex}} 
		\multicolumn{2}{c}{}  & 
		\multicolumn{3}{c}{\texttt{K}} & 
		\multicolumn{3}{c}{\texttt{O}} & 
		\multicolumn{3}{c}{\texttt{TD}} & 
		\multicolumn{3}{c}{\texttt{BU}} & 
		\multicolumn{3}{c}{\texttt{Coup}}  
		\\ \hline \Tstrut \Bstrut
		$n$	&	$\nobj$ 	& $\mathsf{S}$	& $\bar{t}$	& $(\mathsf{M},\mathsf{T})$ 	& $\mathsf{S}$	& $\bar{t}$	& $(\mathsf{M},\mathsf{T})$  	& $\mathsf{S}$	& $\bar{t}$	& $(\mathsf{M},\mathsf{T})$  	& $\mathsf{S}$	& $\bar{t}$	& $(\mathsf{M},\mathsf{T})$  	& $\mathsf{S}$	& $\bar{t}$	& $(\mathsf{M},\mathsf{T})$  \\ \hline \hline \Bstrut\Tstrut
		100 & 3 & 10 & 8.33    & (0,0)  & 10 & 10.81   & (0,0)  & 10 & 10.14  & (0,0)  & 10 & 10.34   & (0,0)  & 10 & 10.18  & (0,0)    \\
		100 & 4 & 10 & 326.36  & (0,0)  & 10 & 427.21  & (0,0)  & 10 & 10.7   & (0,0)  & 10 & 11.32   & (0,0)  & 10 & 10.61  & (0,0)    \\
		100 & 5 & 1  & 105.35  & (0,9)  & 1  & 553.49  & (0,9)  & 10 & 11.27  & (0,0)  & 10 & 15.26   & (0,0)  & 10 & 10.82  & (0,0)    \\
		100 & 6 & 0  & -       & (0,10) & 0  & -       & (0,10) & 10 & 11.91  & (0,0)  & 10 & 35.35   & (0,0)  & 10 & 11.17  & (0,0)    \\
		100 & 7 & 0  & -       & (0,10) & 0  & -       & (0,10) & 10 & 24.09  & (0,0)  & 10 & 54.9    & (0,0)  & 10 & 16.01  & (0,0)    \\ \hline \Tstrut \Bstrut
		150 & 3 & 10 & 34.6    & (0,0)  & 10 & 44.22   & (0,0)  & 10 & 36.82  & (0,0)  & 10 & 59.11   & (0,0)  & 10 & 36.27  & (0,0)    \\
		150 & 4 & 7  & 1290.86 & (0,3)  & 9  & 1485.95 & (0,1)  & 10 & 83.76  & (0,0)  & 8  & 197.08  & (0,2)  & 9  & 62.88  & (0,1)    \\
		150 & 5 & 0  & -       & (0,10) & 0  & -       & (0,10) & 7  & 196.1  & (0,3)  & 5  & 652.01  & (0,5)  & 7  & 136.13 & (0,3)    \\
		150 & 6 & 0  & -       & (0,10) & 0  & -       & (0,10) & 8  & 539.84 & (0,2)  & 5  & 803.55  & (0,5)  & 9  & 315.13 & (0,1)    \\
		150 & 7 & 0  & -       & (0,10) & 0  & -       & (0,10) & 4  & 783.5  & (0,6)  & 3  & 1392.27 & (0,7)  & 4  & 454.3  & (1,5)    \\ \hline \Tstrut \Bstrut
		200 & 3 & 10 & 97.47   & (0,0)  & 10 & 124.17  & (0,0)  & 1  & 63.42  & (0,9)  & 1  & 202.83  & (0,9)  & 1  & 158.17 & (0,9)    \\
		200 & 4 & 0  & -       & (0,10) & 0  & -       & (0,10) & 1  & 292.97 & (0,9)  & 0  & -       & (0,10) & 0  & -      & (0,10)  \\
		200 & 5 & 0  & -       & (2,8)  & 0  & -       & (0,10) & 0  & -      & (0,10) & 0  & -       & (0,10) & 0  & -      & (0,10)  \\
		200 & 6 & 0  & -       & (0,10) & 0  & -       & (0,10) & 0  & -      & (0,10) & 0  & -       & (0,10) & 0  & -      & (0,10)  \\
		200 & 7 & 0  & -       & (0,10) & 0  & -       & (0,10) & 0  & -      & (0,10) & 0  & -       & (0,10) & 0  & -      & (0,10)  
	\end{tabular}

\bigskip

	\caption{Set packing results}
\begin{tabular}{P{2ex}P{1.5ex}|P{1ex}P{6ex}P{6ex}|P{1ex}P{6ex}P{6ex}|P{1ex}P{6ex}P{6ex}|P{1ex}P{6ex}P{6ex}|P{1ex}P{6ex}P{6ex}|P{1ex}P{6ex}P{6ex}|P{1ex}P{6ex}P{6ex}|P{1ex}P{6ex}P{6ex}} 
	\multicolumn{2}{c}{}  & 
	\multicolumn{3}{c}{\texttt{K}} & 
	\multicolumn{3}{c}{\texttt{O}} & 
	\multicolumn{3}{c}{\texttt{TD}} & 
	\multicolumn{3}{c}{\texttt{BU}} & 
	\multicolumn{3}{c}{\texttt{Coup}} 
	\\ \hline \Tstrut \Bstrut
	$n$	&	$\nobj$ 	& $\mathsf{S}$	& $\bar{t}$	& $(\mathsf{M},\mathsf{T})$ 	& $\mathsf{S}$	& $\bar{t}$	& $(\mathsf{M},\mathsf{T})$  	& $\mathsf{S}$	& $\bar{t}$	& $(\mathsf{M},\mathsf{T})$  	& $\mathsf{S}$	& $\bar{t}$	& $(\mathsf{M},\mathsf{T})$  	& $\mathsf{S}$	& $\bar{t}$	& $(\mathsf{M},\mathsf{T})$  \\ \hline \hline \Bstrut\Tstrut
	100 & 3 & 10 & 9.38    & (0,0)  & 10 & 14.34   & (0,0)  & 10 & 0.09   & (0,0)  & 10 & 0.12   & (0,0)  & 10 & 0.08   & (0,0)    \\
	100 & 4 & 10 & 518.46  & (0,0)  & 10 & 954.85  & (0,0)  & 10 & 0.15   & (0,0)  & 10 & 0.23   & (0,0)  & 10 & 0.13   & (0,0)    \\
	100 & 5 & 0  & -       & (0,10) & 0  & -       & (0,10) & 10 & 0.28   & (0,0)  & 10 & 0.56   & (0,0)  & 10 & 0.24   & (0,0)    \\
	100 & 6 & 0  & -       & (0,10) & 0  & -       & (0,10) & 10 & 0.77   & (0,0)  & 10 & 0.86   & (0,0)  & 10 & 0.57   & (0,0)    \\
	100 & 7 & 0  & -       & (0,10) & 0  & -       & (0,10) & 10 & 3.6    & (0,0)  & 10 & 3.73   & (0,0)  & 10 & 1.28   & (0,0)    \\
	150 & 3 & 10 & 23.21   & (0,0)  & 10 & 36.33   & (0,0)  & 10 & 12.11  & (0,0)  & 10 & 17.94  & (0,0)  & 10 & 12.41  & (0,0)    \\
	150 & 4 & 1  & 2916.26 & (0,9)  & 2  & 3057.22 & (0,8)  & 10 & 27.97  & (0,0)  & 10 & 75.58  & (0,0)  & 10 & 23.16  & (0,0)    \\
	150 & 5 & 0  & -       & (2,8)  & 0  & -       & (0,10) & 10 & 87.74  & (0,0)  & 5  & 155.74 & (0,5)  & 10 & 65.68  & (0,0)    \\
	150 & 6 & 0  & -       & (1,9)  & 0  & -       & (0,10) & 10 & 203.97 & (0,0)  & 5  & 318.31 & (0,5)  & 10 & 125.62 & (0,0)    \\
	150 & 7 & 0  & -       & (0,10) & 0  & -       & (0,10) & 10 & 888.75 & (0,0)  & 2  & 196.25 & (0,8)  & 10 & 475.53 & (0,0)    \\
	200 & 3 & 10 & 76.47   & (0,0)  & 10 & 111.18  & (0,0)  & 1  & 121.92 & (0,9)  & 0  & -      & (0,10) & 1  & 137.78 & (0,9)    \\
	200 & 4 & 0  & -       & (0,10) & 0  & -       & (0,10) & 0  & -      & (0,10) & 0  & -      & (0,10) & 1  & 336.82 & (0,9)   \\
	200 & 5 & 0  & -       & (0,10) & 0  & -       & (0,10) & 0  & -      & (0,10) & 0  & -      & (0,10) & 0  & -      & (0,10)  \\
	200 & 6 & 0  & -       & (1,9)  & 0  & -       & (0,10) & 0  & -      & (0,10) & 0  & -      & (0,10) & 0  & -      & (0,10)  \\
	200 & 7 & 0  & -       & (0,10) & 0  & -       & (0,10) & 0  & -      & (0,10) & 0  & -      & (0,10) & 0  & -      & (0,10) 
\end{tabular}
\label{tab:SetPackingResults}

\end{table}

\begin{landscape}	
	\begin{table}[]
		\centering
		\tiny
		\caption{Absolute value results}
		\label{tab:AbsValueResults}
		\begin{tabular}{P{1ex}P{1ex}P{2ex}|P{1ex}P{6ex}P{6ex}|P{1ex}P{6ex}P{6ex}|P{1ex}P{6ex}P{6ex}|P{1ex}P{6ex}P{6ex}|P{1ex}P{6ex}P{6ex}} 
			\multicolumn{3}{c}{}  & 
			\multicolumn{3}{c}{\texttt{K}} & 
			\multicolumn{3}{c}{\texttt{O}} & 
			\multicolumn{3}{c}{\texttt{TD}} & 
			\multicolumn{3}{c}{\texttt{BU}} & 
			\multicolumn{3}{c}{\texttt{Coup}} 
			\\ \hline \Tstrut \Bstrut
			$n$	&	$\nobj$  & $\mathsf{C}$	& $\mathsf{S}$	& $\bar{t}$	& $(\mathsf{M},\mathsf{T})$ 	& $\mathsf{S}$	& $\bar{t}$	& $(\mathsf{M},\mathsf{T})$  	& $\mathsf{S}$	& $\bar{t}$	& $(\mathsf{M},\mathsf{T})$  	& $\mathsf{S}$	& $\bar{t}$	& $(\mathsf{M},\mathsf{T})$  	& $\mathsf{S}$	& $\bar{t}$	& $(\mathsf{M},\mathsf{T})$    \\ \hline \hline \Bstrut\Tstrut
			15 & 3 & 30 & 10 & 1.21   & (0,0)  & 10 & 1.71   & (0,0)  & 10 & 0      & (0,0) & 10 & 0      & (0,0) & 10 & 0      & (0,0) \\
			15 & 4 & 30 & 10 & 8.31   & (0,0)  & 10 & 25.9   & (0,0)  & 10 & 0      & (0,0) & 10 & 0      & (0,0) & 10 & 0      & (0,0) \\
			15 & 5 & 30 & 4  & 1882.3 & (0,6)  & 9  & 2166.9 & (0,1)  & 10 & 0      & (0,0) & 10 & 0      & (0,0) & 10 & 0      & (0,0) \\
			15 & 6 & 30 & 0  & -      & (1,9)  & 0  & -      & (0,10) & 10 & 0      & (0,0) & 10 & 0      & (0,0) & 10 & 0      & (0,0) \\
			15 & 7 & 30 & 0  & -      & (0,10) & 0  & -      & (0,10) & 10 & 0      & (0,0) & 10 & 0      & (0,0) & 10 & 0      & (0,0) \\ \hline \Bstrut\Tstrut
			15 & 3 & 40 & 10 & 2.05   & (0,0)  & 10 & 2.85   & (0,0)  & 10 & 0.01   & (0,0) & 10 & 0.01   & (0,0) & 10 & 0.01   & (0,0) \\
			15 & 4 & 40 & 10 & 23.02  & (0,0)  & 10 & 65.87  & (0,0)  & 10 & 0.01   & (0,0) & 10 & 0.01   & (0,0) & 10 & 0.01   & (0,0) \\
			15 & 5 & 40 & 0  & -      & (0,10) & 0  & -      & (0,10) & 10 & 0.01   & (0,0) & 10 & 0.01   & (0,0) & 10 & 0.01   & (0,0) \\
			15 & 6 & 40 & 0  & -      & (1,9)  & 0  & -      & (0,10) & 10 & 0.02   & (0,0) & 10 & 0.02   & (0,0) & 10 & 0.02   & (0,0) \\
			15 & 7 & 40 & 0  & -      & (0,10) & 0  & -      & (0,10) & 10 & 0.04   & (0,0) & 10 & 0.04   & (0,0) & 10 & 0.04   & (0,0) \\ \hline \Bstrut\Tstrut
			15 & 3 & 50 & 10 & 2.76   & (0,0)  & 10 & 3.71   & (0,0)  & 10 & 0.01   & (0,0) & 10 & 0.01   & (0,0) & 10 & 0.02   & (0,0) \\
			15 & 4 & 50 & 10 & 31.58  & (0,0)  & 10 & 87.62  & (0,0)  & 10 & 0.01   & (0,0) & 10 & 0.02   & (0,0) & 10 & 0.02   & (0,0) \\
			15 & 5 & 50 & 0  & -      & (1,9)  & 0  & -      & (0,10) & 10 & 0.02   & (0,0) & 10 & 0.02   & (0,0) & 10 & 0.02   & (0,0) \\
			15 & 6 & 50 & 0  & -      & (1,9)  & 0  & -      & (0,10) & 10 & 0.03   & (0,0) & 10 & 0.04   & (0,0) & 10 & 0.03   & (0,0) \\
			15 & 7 & 50 & 0  & -      & (0,10) & 0  & -      & (0,10) & 10 & 0.07   & (0,0) & 10 & 0.09   & (0,0) & 10 & 0.05   & (0,0) \\ \hline \Bstrut\Tstrut
			20 & 3 & 30 & 10 & 7.86   & (0,0)  & 10 & 9.84   & (0,0)  & 10 & 0.14   & (0,0) & 10 & 0.14   & (0,0) & 10 & 0.17   & (0,0) \\
			20 & 4 & 30 & 10 & 89.82  & (0,0)  & 10 & 222.09 & (0,0)  & 10 & 0.15   & (0,0) & 10 & 0.14   & (0,0) & 10 & 0.18   & (0,0) \\
			20 & 5 & 30 & 0  & -      & (1,9)  & 0  & -      & (0,10) & 10 & 0.17   & (0,0) & 10 & 0.19   & (0,0) & 10 & 0.21   & (0,0) \\
			20 & 6 & 30 & 0  & -      & (2,8)  & 0  & -      & (0,10) & 10 & 0.24   & (0,0) & 10 & 0.28   & (0,0) & 10 & 0.29   & (0,0) \\
			20 & 7 & 30 & 0  & -      & (1,9)  & 0  & -      & (0,10) & 10 & 0.4    & (0,0) & 10 & 0.49   & (0,0) & 10 & 0.38   & (0,0) \\ \hline \Bstrut\Tstrut
			20 & 3 & 40 & 10 & 32.85  & (0,0)  & 10 & 40.07  & (0,0)  & 10 & 0.63   & (0,0) & 10 & 0.74   & (0,0) & 10 & 0.94   & (0,0) \\
			20 & 4 & 40 & 10 & 553.77 & (0,0)  & 10 & 1046.7 & (0,0)  & 10 & 0.7    & (0,0) & 10 & 0.76   & (0,0) & 10 & 1.06   & (0,0) \\
			20 & 5 & 40 & 0  & -      & (1,9)  & 0  & -      & (0,10) & 10 & 0.8    & (0,0) & 10 & 0.9    & (0,0) & 10 & 1.09   & (0,0) \\
			20 & 6 & 40 & 0  & -      & (0,10) & 0  & -      & (0,10) & 10 & 1.28   & (0,0) & 10 & 1.43   & (0,0) & 10 & 1.56   & (0,0) \\
			20 & 7 & 40 & 0  & -      & (1,9)  & 0  & -      & (0,10) & 10 & 2.62   & (0,0) & 10 & 2.67   & (0,0) & 10 & 2.41   & (0,0) \\ \hline \Bstrut\Tstrut
			20 & 3 & 50 & 10 & 42.55  & (0,0)  & 10 & 55.92  & (0,0)  & 10 & 1.37   & (0,0) & 10 & 1.56   & (0,0) & 10 & 1.98   & (0,0) \\
			20 & 4 & 50 & 10 & 933.77 & (0,0)  & 10 & 1835.4 & (0,0)  & 10 & 1.47   & (0,0) & 10 & 1.69   & (0,0) & 10 & 2.47   & (0,0) \\
			20 & 5 & 50 & 0  & -      & (0,10) & 0  & -      & (0,10) & 10 & 1.84   & (0,0) & 10 & 2.15   & (0,0) & 10 & 2.78   & (0,0) \\
			20 & 6 & 50 & 0  & -      & (2,8)  & 0  & -      & (0,10) & 10 & 3.29   & (0,0) & 10 & 3.5    & (0,0) & 10 & 3.95   & (0,0) \\
			20 & 7 & 50 & 0  & -      & (0,10) & 0  & -      & (0,10) & 10 & 7.08   & (0,0) & 10 & 7.03   & (0,0) & 10 & 6.26   & (0,0) \\ \hline \Bstrut\Tstrut
			25 & 3 & 30 & 10 & 57.61  & (0,0)  & 10 & 69.52  & (0,0)  & 10 & 3.12   & (0,0) & 10 & 3.26   & (0,0) & 10 & 4.21   & (0,0) \\
			25 & 4 & 30 & 10 & 1209.4 & (0,0)  & 9  & 2037.7 & (0,1)  & 10 & 3.35   & (0,0) & 10 & 3.59   & (0,0) & 10 & 4.43   & (0,0) \\
			25 & 5 & 30 & 0  & -      & (1,9)  & 0  & -      & (0,10) & 10 & 3.78   & (0,0) & 10 & 3.98   & (0,0) & 10 & 5.05   & (0,0) \\
			25 & 6 & 30 & 0  & -      & (0,10) & 0  & -      & (0,10) & 10 & 5.69   & (0,0) & 10 & 5.37   & (0,0) & 10 & 7.08   & (0,0) \\
			25 & 7 & 30 & 0  & -      & (0,10) & 0  & -      & (0,10) & 10 & 10.54  & (0,0) & 10 & 8.97   & (0,0) & 10 & 11.03  & (0,0) \\ \hline \Bstrut\Tstrut
			25 & 3 & 40 & 10 & 86.15  & (0,0)  & 10 & 89.88  & (0,0)  & 10 & 24.76  & (0,0) & 10 & 25.68  & (0,0) & 10 & 41.76  & (0,0) \\
			25 & 4 & 40 & 8  & 2974   & (0,2)  & 2  & 3461.5 & (0,8)  & 10 & 26.58  & (0,0) & 10 & 27.51  & (0,0) & 10 & 43.84  & (0,0) \\
			25 & 5 & 40 & 0  & -      & (0,10) & 0  & -      & (0,10) & 10 & 32.49  & (0,0) & 10 & 33.66  & (0,0) & 10 & 51.75  & (0,0) \\
			25 & 6 & 40 & 0  & -      & (0,10) & 0  & -      & (0,10) & 10 & 60.98  & (0,0) & 10 & 51.1   & (0,0) & 10 & 77.37  & (0,0) \\
			25 & 7 & 40 & 0  & -      & (0,10) & 0  & -      & (0,10) & 10 & 148.45 & (0,0) & 10 & 95.4   & (0,0) & 10 & 144.34 & (0,0) \\ \hline \Bstrut\Tstrut
			25 & 3 & 50 & 10 & 87.75  & (0,0)  & 10 & 86.78  & (0,0)  & 10 & 49.21  & (0,0) & 10 & 53.73  & (0,0) & 10 & 99.72  & (0,0) \\
			25 & 4 & 50 & 6  & 2144.1 & (0,4)  & 2  & 2135.3 & (0,8)  & 10 & 54.91  & (0,0) & 10 & 60.58  & (0,0) & 10 & 106.52 & (0,0) \\
			25 & 5 & 50 & 0  & -      & (0,10) & 0  & -      & (0,10) & 10 & 67.79  & (0,0) & 10 & 72.88  & (0,0) & 10 & 128.78 & (0,0) \\
			25 & 6 & 50 & 0  & -      & (0,10) & 0  & -      & (0,10) & 10 & 137.12 & (0,0) & 10 & 111.41 & (0,0) & 10 & 171.18 & (0,0) \\
			25 & 7 & 50 & 0  & -      & (0,10) & 0  & -      & (0,10) & 10 & 427.1  & (0,0) & 10 & 222.51 & (0,0) & 10 & 358.04 & (0,0)
		\end{tabular}
	\label{tab:AbsoluteValueResults}
	\end{table}
	
\end{landscape}

\end{appendix}

\end{document}